\documentclass[12pt]{article}
\usepackage[mathscr]{euscript}
\usepackage[T1]{fontenc}
\usepackage[utf8]{inputenc}
\usepackage[english]{babel}
\usepackage{amsmath,amsfonts,amsthm,amssymb,MnSymbol}

\usepackage{tikz}
\usetikzlibrary{tikzmark}

\usepackage{hyperref,url}
\usepackage[figureright]{rotating}
\usepackage{changepage}

\usepackage{bussproofs}


\author{
Daniyar Shamkanov\\ \normalsize{\textit{Steklov Mathematical Institute of the Russian Academy of Sciences}}\\
\normalsize{\textit{National
Research University Higher School of Economics}}\\
\normalsize{\textit{daniyar.shamkanov@gmail.com}}\\
}
\title{On structural proof theory of the modal logic $\mathsf{K}^+$ extended with infinitary derivations}
\date{}

\theoremstyle{plain}
\newtheorem{theorem}{Theorem}
\newtheorem{lemma}{Lemma}
\newtheorem{propos}{Proposition}
\newtheorem{corollary}{Corollary}

\theoremstyle{definition}

\theoremstyle{remark}

\EnableBpAbbreviations

\begin{document}
\maketitle

\begin{abstract}
We consider an extension of the modal logic of transitive closure $\mathsf{K}^+$ with some inifinitary derivations and present a sequent calculus for this extension, which allows non-well-founded proofs. For the given calculus, we obtain the cut-elimination theorem following the lines of so called continuous cut elimination.
Our consideration also covers ordinary proofs of $\mathsf{K}^+$ since they correspond to cyclic cut-free proofs of the presented sequent calculus. 
\\\\
\textit{Keywords:} transitive closure, infinitary derivations, non-well-founded proofs, cut elimination.
\end{abstract}

\section{Introduction}
\label{s1}

One of the important goals of proof theory is to understand the structure of proofs. This article focuses on the structure of infinite proofs in the modal logic of transitive closure $\mathsf{K}^+$. Let us recall that $\mathsf{K^+}$ \cite{Kashima2010,DoSmo12,StShZo20} is a Kripke-complete modal propositional system whose language contains modal connectives $\Box$ and $\Box^+$. The corresponding class of Kripke frames consists of bimodal frames of the form $(W, R,R^+)$, where $R^+$ is the transitive closure of $R$. This system belongs to the family of modal fixed-point logics, which from the point of view of algebraic semantics can be explained as follows: in any $\mathsf{K^+}$-algebra $\mathcal{A}$, an element $\Box^+ a$ is the greatest fixed-point of a mapping $z \mapsto \Box a \wedge z$, i.e. $\Box^+ a = \nu z. (\Box a \wedge z)$. Consequently, $\mathsf{K^+}$ is a fragment of the modal $\mu$-calculus \cite{Kozen1983,Lenzi2010} and, like other systems reflecting various aspects of inductive reasoning, can be defined by a deductive system that allows cyclic and more general non-well-founded derivations. 


Non-well-founded derivations, or $\infty$-derivations for short, are defined as possibly infinite trees of formulas (sequents) constructed according to the inference rules of a deductive system.
Certain additional conditions are usually imposed on the infinite branches of these trees depending on the system under consideration. 
An important family of $\infty$-derivations is the family consisting of all so called regular ones, where an $\infty$-derivation is regular if it contains only finitely many non-isomorphic subtrees or, equivalently, can be obtained by unravelling a cyclic derivation. 
Cyclic derivations can be understood as finite directed graphs of formulas (sequents), the unravellings of which are non-well-founded derivations. In this article, we consider $\infty$-proofs in a sequent calculus for an infinitary extension of the logic $\mathsf{K^+}$ and address the problem of eliminating all applications of the cut rule from an $\infty$-proof syntactically.

Several articles can be mentioned that present cut elimination results for systems with non-well-founded proofs. Fortier and Santocanale worked with the $\mu$-calculus with additive connectives in \cite{ForSan2013}. Baelde, Doumane and Saurin considered non-well-founded proofs for a variant of the $\mu$-calculus called $\mu\mathsf{MALL}$ and obtained the corresponding cut elimination theorem in \cite{BaDoSa2016}. Together with Kuperberg, they further develop their approach in \cite{BaDoKuSa2022}. Das and Pous established the cut elimination theorem for the logic of $\ast$-continuous action lattices in \cite{DaPo2018}. In addition, Shamkanov and Savateev proved such a result in the case of the modal logic $\mathsf{Grz}$ \cite{SavSham21}. 

In all of these articles except \cite{SavSham21}, the authors first presented a procedure for eliminating all applications of the cut rule from a non-well-founded proof, which results in an infinite tree of sequents, and then showed that this tree satisfies the condition imposed on infinite branches\footnote{In \cite{ForSan2013}, Fortier and Santocanale didn't show that the resulting tree satisfies the corresponding condition of the $\mu$-calculus. However, Fortier noticed in private correspondence that this condition could be derived from the results of his thesis \cite{Fortier2014} (in French).}. A different approach was chosen in \cite{SavSham21}. Its authors defined a cut elimination operation, which directly provided a correct non-well-founded proof. In order to avoid nested co-inductive and inductive reasoning, they adopted a perspective from denotational semantics of computer languages, where program types are interpreted as ultrametric spaces and fixed-point combinators are embedded using the Banach fixeded-point theorem (see \cite{BaMaCe94}, \cite{Es1998} and \cite{Breu2001}). The authors considered the set of non-well-founded proofs of $\mathsf{Grz}$ and various sets of operations acting on these proofs as ultrametric spaces and defined the cut elimination operation following the lines of Prie\ss-Crampe's fixed-point theorem (see \cite{PriessCrampe1990} and \cite{Schoerner2003})\footnote{Essentially the same fixed-point result was obtained by Petalas and Vidalis in \cite{PetVid1993}.}, which is a strengthening of Banach's one for the case of spherically complete ultrametric spaces. 

In the given article, we want to further develop this approach to cut elimination. In our case, various sets of operations on non-well-founded sequent proofs of $\mathsf{K}^+$ could be considered as generalized ultrametric spaces. However, we decided to use a very similar approach, which also unifies recursive and co-recursive reasoning. Instead of generalized ultrametric spaces, we use sets equipped with well-ordered families of equivalence relations and define the cut elimination operation in accordance with the fixed-point theorem of Gianantonio and Miculan (see \cite{GiMi2003}). It is also interesting to note that the recursive equations we write down for the cut elimination operation are essentially the same as the equations in the standard case of finite sequent proofs. The difference is that we have to rely on a more complex fixed-point theorem in order to solve these equations in the case of non-well-founded proofs.



Recently we studied a fixed-point logic very similar to the system $\mathsf{K^+}$, namely the modal logic of common knowledge $\mathsf{S4}^C_I$. In \cite{Sham23}, we introduced an extension of the logic $\mathsf{S4}^{C}_I$ with some $\omega$-derivations and established strong neighbourhood completeness of the resulting system in the cases of local and global semantic consequence relations\footnote{A formula $A$ is a global semantic consequence of $\Gamma$ over the class of neighbourhood $\mathsf{S4}^{C}_I$-frames if, for any neighbourhood $\mathsf{S4}^{C}_I$-model $\mathcal{M}$,
\[(\forall B\in \Gamma\;\; \mathcal{M} \vDash B )\Longrightarrow \mathcal{M} \vDash A.\]}. 
For the system $\mathsf{K^+}$, similar $\omega$-derivations are defined as follows. An $\omega$-derivation in the logic $\mathsf{K}^+$ is a well-founded (possibly infinite) tree whose nodes are marked by formulas of $\mathsf{K}^+$ and that is constructed according to the rules
\[
\AXC{$A$}
\AXC{$A \rightarrow B$}
\LeftLabel{\textsf{mp}}
\RightLabel{ ,}
\BIC{$B$}
\DisplayProof \qquad
\AXC{$A$}
\LeftLabel{$\mathsf{nec}$}
\UIC{$\Box^+ A$}
\DisplayProof 
\]
and the inference rule
\[
\AXC{$B_0 \rightarrow \Box (A \wedge B_1)$}
\AXC{$B_1 \rightarrow \Box (A \wedge B_2)$}
\AXC{$B_2 \rightarrow \Box (A \wedge B_3)$}
\AXC{$\dots$}
\LeftLabel{$\omega$}
\RightLabel{ .}
\QIC{$B_0 \rightarrow \Box^+ A$}
\DisplayProof 
\]
We believe that this extension of $\mathsf{K^+}$ is also strongly locally and globally complete for its neighbourhood interpretation, just like the logic $\mathsf{S4}^{C}_I$, and, in the present article, focus on syntactic aspects of this extension. Inspired by a calculus from \cite{BuchKuznStud10}, we introduce our non-well-founded sequent calculus, which captures the consequence relation given by $\omega$-derivations, and obtain the corresponding cut-elimination theorem. This consideration also covers ordinary proofs of $\mathsf{K}^+$ since they correspond to regular cut-free $\infty$-proofs of the given calculus. 
It remains to note that cyclic sequent proofs of $\mathsf{K}^+$ were semantically studied by Doczkal and Smolka in \cite{DoSmo12}. For a survey of previous works on proof theory in the field of modal logic of common knowledge, we refer the reader to \cite{MaStu2018}.

The article is organized as follows. In Section \ref{s2}, we remind the reader of the axiomatic calculus for the logic $\mathsf{K}^+$ and define $\omega$-derivations in the given system. In Section \ref{s3}, we present our non-well-founded sequent calculus, which corresponds to the consequence relation given by $\omega$-derivations in $\mathsf{K}^+$. In Section
\ref{s4}, we introduce annotated versions of sequents, inference rules and $\infty$-proofs of this calculus. In Section \ref{s4.1}, we prove several fixed-point results, which allow us to recursively define operations acting on the set of $\infty$-proofs and show properties of such operations. In Sections \ref{s4.2} and \ref{s4.5}, we define the cut elimination operation. Finally, in Section \ref{s5}, we show that the logic $\mathsf{K}^+$ corresponds to the fragment of the sequent calculus obtained by allowing only regular cut-free $\infty$-proofs.


\section{Infinitary derivations in the logic $\mathsf{K}^+$}
\label{s2}
In this section, we briefly remind the reader of a Frege-Hilbert calculus for the bimodal logic of transitive closure $\mathsf{K}^+$ and consider infinitary derivations in the given system.


\textit{Formulas of $\mathsf{K}^+$} are built from propositional variables $p_0, p_1, p_2, \dotsc$ and the constant $\bot$ by means of propositional connectives $\to$, $\Box$ and $\Box^+$. We treat other Boolean connectives as abbreviations: $\neg A  := A\rightarrow \bot$, $\top  := \neg \bot$, $A \wedge B  := \neg (A \rightarrow \neg B)$, $A \vee B  := (\neg A \rightarrow B)$. In what follows, we denote the set of formulas of $\mathsf{K}^+$ by $\mathit{Fm}$. 


The calculus for the logic $\mathsf{K}^+$ is given by the following axioms and inference rules.\medskip

\textit{Axioms:}
\begin{itemize}
\item[(i)] the tautologies of classical propositional logic;
\item[(ii)] $\Box (A \rightarrow B) \rightarrow (\Box A \rightarrow \Box B)$;
\item[(iii)] $\Box^+ (A \rightarrow B) \rightarrow (\Box^+ A \rightarrow \Box^+ B)$;
\item[(iv)] $\Box^+ A \rightarrow \Box A \wedge \Box \Box^+ A$;
\item[(v)] $\Box A \wedge \Box^+(A \rightarrow \Box A) \rightarrow \Box^+ A$.
\end{itemize}

\textit{Inference rules:} 
\[
\AXC{$A$}
\AXC{$A \rightarrow B$}
\LeftLabel{\textsf{mp}}
\RightLabel{ ,}
\BIC{$B$}
\DisplayProof \qquad
\AXC{$A$}
\LeftLabel{$\mathsf{nec}$}
\RightLabel{ .}
\UIC{$\Box^+ A$}
\DisplayProof 
\]

We introduce infinitary derivations by adding to the given calculus a specific admissible rule with countably many premises. Recall that an inference rule is called \emph{admissible} (in a given deductive system) if, for every instance of the rule, the conclusion is provable whenever all premises are provable.
An \emph{$\omega$-derivation} is a well-founded tree whose nodes are marked by formulas of $\mathsf{K}^+$ and that is constructed according to the rules ($\mathsf{mp}$), ($\mathsf{nec}$) and the following rule:
\[
\AXC{$B_0 \rightarrow \Box (A \wedge B_1)$}
\AXC{$B_1 \rightarrow \Box (A \wedge B_2)$}
\AXC{$B_2 \rightarrow \Box (A \wedge B_3)$}
\AXC{$\dots$}
\LeftLabel{$\omega$}
\RightLabel{ .}
\QIC{$B_0 \rightarrow \Box^+ A$}
\DisplayProof 
\]
In the given inference, we call all premises except the leftmost one \emph{boxed}. Admissibility of the rule ($\omega$) in the system $\mathsf{K}^+$ is proved in the final section. 



We define an \emph{assumption leaf} of an $\omega$-derivation $\delta$ as a leaf that is not marked by an axiom of $\mathsf{K}^+$. An assumption leaf is \emph{boxed} if, on the path from the root of $\delta$ to the given leaf, there exists an application of the inference rule ($\mathsf{nec}$) or this path intersects an application of the rule ($\omega$) on a boxed premise. Note that any $\omega$-derivation contains only finitely many non-boxed assumption leaves.
We put $\Sigma;\Gamma \vdash_\omega A$ if there is an $\omega$-derivation with a root marked by $A$ in which every boxed assumption leaf is marked by some formula from $\Sigma$ and every non-boxed assumption leaf is marked by some formula from $\Gamma$. 

\section{A non-well-founded sequent calculus}
\label{s3}
We introduce a sequent system inspired by a calculus from \cite{BuchKuznStud10}. The system allows non-well-founded proofs and corresponds to the consequence relation given by $\omega$-derivations in $\mathsf{K}^+$. In this section, we prove that $\omega$-derivations of $\mathsf{K}^+$ can be transformed into non-well-founded proofs of the sequent system. The converse will be shown in the next section.

A \textit{sequent} is defined as an expression of the form $\Sigma;\Gamma \Rightarrow \Delta$, where $\Gamma$ and $\Delta$ are finite multisets of formulas and $\Sigma$ is an arbitrary set of formulas. 
For a multiset of formulas $\Gamma = A_1,\dotsc, A_n$, we set $\Box \Gamma := \Box A_1,\dotsc, \Box A_n$ and $\Box^+ \Gamma := \Box^+ A_1,\dotsc, \Box^+ A_n$.

The sequent calculus $\mathsf{S}$
has the following initial sequents and inference rules: 
\begin{gather*}
\AXC{$\Sigma;\Gamma, p \Rightarrow p, \Delta, $}
\DisplayProof  \qquad
\AXC{$\Sigma;\Gamma , \bot \Rightarrow  \Delta,$}
\DisplayProof 
\end{gather*}
\begin{gather*}
\AXC{$\Sigma;\Gamma , B \Rightarrow  \Delta$}
\AXC{$\Sigma;\Gamma \Rightarrow  A, \Delta$}
\LeftLabel{$\mathsf{\rightarrow_L}$}
\BIC{$\Sigma;\Gamma , A \rightarrow B \Rightarrow  \Delta$}
\DisplayProof \;,\qquad
\AXC{$\Sigma;\Gamma, A \Rightarrow  B ,\Delta$}
\LeftLabel{$\mathsf{\rightarrow_R}$}
\UIC{$\Sigma;\Gamma \Rightarrow  A \rightarrow B ,\Delta$}
\DisplayProof \;,
\end{gather*}
\begin{gather*}
\AXC{$\Sigma;\Sigma_0,\Lambda, \Pi, \Box^+ \Pi \Rightarrow A$}
\LeftLabel{$\mathsf{\Box}$}
\UIC{$\Sigma; \Phi, \Box \Lambda, \Box^+ \Pi \Rightarrow \Box A , \Psi$}
\DisplayProof \;,
\end{gather*}
\begin{gather*}
\AXC{$\Sigma;\Sigma_0,\Lambda, \Pi, \Box^+ \Pi \Rightarrow A$}
\AXC{$\Sigma;\Sigma_0,\Lambda, \Pi, \Box^+ \Pi \Rightarrow \Box^+ A$}
\LeftLabel{$\Box^+$}
\BIC{$\Sigma; \Phi, \Box \Lambda, \Box^+ \Pi \Rightarrow  \Box^+ A ,\Psi$}
\DisplayProof \;,
\end{gather*}
where $\Sigma_0$ is a finite subset of $\Sigma$. 

The last two inference rules of the calculus are called \emph{modal rules}. For a modal rule ($\Box$) (or ($\Box^+$)), the elements of $\Box \Lambda$ and $\Box^+ \Pi$ are \emph{side formulas} and the formula $\Box A$ (or $\Box^+ A$) is the \emph{principal formula} of the corresponding inference.
We call an application of a modal rule \emph{slim}\footnote{We coined this term from \cite{Iemhoff2016}.} if multisets $\Lambda $ and $\Pi$ do not contain repetitions (i.e., $\Lambda $ and $\Pi$ are sets). 


The cut rule is defined as 
\begin{gather*}
\AXC{$\Sigma;\Gamma\Rightarrow A,\Delta$}
\AXC{$\Sigma;\Gamma,A\Rightarrow\Delta$}
\LeftLabel{$\mathsf{cut}$}
\RightLabel{ ,}
\BIC{$\Sigma;\Gamma\Rightarrow\Delta$}
\DisplayProof
\end{gather*}
where $A$ is the \emph{cut formula} of the given inference. We define the calculus $\mathsf{S}+\mathsf{cut}$ by adding the inference rule ($\mathsf{cut}$) to the system $\mathsf{S}$.

An \emph{$\infty$-proof} is a (possibly infinite) tree whose nodes are marked by sequents and that is constructed according to the rules of the calculus $\mathsf{S}+\mathsf{cut}$. In addition, every leaf in an $\infty$-proof is marked by an initial sequent and every infinite branch in it must contain a tail satisfying the following conditions: (a) all applications of the rule ($\mathsf{\Box^+}$) in the tail have the same principal formula $\Box^+ A$; (b) the tail passes through the right premise of the rule ($\mathsf{\Box^+}$) infinitely many times; (c) the tail doesn't pass through the left premise of the rule ($\mathsf{\Box^+}$); (d) there are no applications of the rule ($\Box$) in the tail. An $\infty$-proof $\pi$ is called \emph{cut-free} if there are no applications of the rule ($\mathsf{cut}$) in $\pi$. Besides, an $\infty$-proof is \emph{regular} if it contains only finitely many non-isomorphic subtrees. We say that a sequent is \emph{provable in $\mathsf{S+\mathsf{cut}}$ (in $\mathsf{S}$)} if there is a (cut-free) $\infty$-proof with the root marked by this sequent. 

For example, consider the following cut-free $\infty$-proof:
\begin{gather}\label{Example}
\AXC{$\mathsf{Ax}$}
\noLine
\UIC{$\Sigma ; p , F, \Box^+ F  \Rightarrow   p$}
\AXC{$\pi$}
\noLine
\UIC{$\vdots$}
\noLine
\UIC{$\Sigma ; p ,  \Box p, \Box^+ F  \Rightarrow   \Box^+ p$}
\AXC{$\mathsf{Ax}$}
\noLine
\UIC{$\Sigma ; p ,   \Box^+ F  \Rightarrow    p, \Box^+ p$}
\LeftLabel{$\mathsf{\rightarrow_L}$}
\BIC{$\Sigma ;p , F, \Box^+ F  \Rightarrow   \Box^+ p$}
\LeftLabel{$\mathsf{\Box}^+$}
\RightLabel{ ,}
\BIC{$\Sigma ; p, \Box p , \Box^+ F  \Rightarrow  \Box^+ p$}
\DisplayProof 
\end{gather}
where $F= p \rightarrow \Box p$ and the subtree $\pi$ is isomorphic to the whole $\infty$-proof. Here we have the unique infinite branch, which passes through alternate applications of inference rules ($\mathsf{\rightarrow_L}$) and ($\mathsf{\Box^+}$) infinitely many times. Conditions (a)-(d) on infinite branches of $\infty$-proofs trivially hold if we consider the given branch as its own tail.  

Note that we can easily find an infinite tree  constructed according to the rules of $\mathsf{S}$ that is not an $\infty$-proof. Indeed, consider a sequent tree
\[
\AXC{$\mathsf{Ax}$}
\noLine
\UIC{$\Sigma ; p , \Box^+ (\Box p \to p)   \Rightarrow   p$}
\AXC{$\pi$}
\noLine
\UIC{$\vdots$}
\noLine
\UIC{$\Sigma ;\Box^+ (\Box p \to p)  \Rightarrow \Box p, p$}
\LeftLabel{$\mathsf{\rightarrow_L}$}
\BIC{$\Sigma ;\Box p \to p , \Box^+ (\Box p \to p)   \Rightarrow   p$}
\LeftLabel{$\mathsf{\Box}$}
\RightLabel{ ,}
\UIC{$\Sigma ; \Box^+ (\Box p \to p)  \Rightarrow  \Box p, p$}
\DisplayProof 
\]
where $\pi $ is isomorphic to the whole tree. The only infinite branch does not intersect applications of the rule ($\Box^+$) and, therefore, does not contain tails that satisfy condition (b).
 
Let us consider an example of an $\infty$-proof in which $\Sigma$ is involved significantly. In what follows, we abbreviate proof fragments of the form 
\begin{gather*}
\AXC{$\mathsf{Ax}$}
\noLine
\UIC{$\Sigma;\Gamma,  \bot \Rightarrow \Delta$}
\AXC{$\Sigma;\Gamma, A, B \Rightarrow   \bot,\Delta$}
\LeftLabel{$\mathsf{\to_R}$}
\UIC{$\Sigma;\Gamma, A \Rightarrow   B\to \bot,\Delta$}
\LeftLabel{$\mathsf{\to_R}$}
\UIC{$\Sigma;\Gamma \Rightarrow  A\to (B\to \bot),\Delta$}
\LeftLabel{$\mathsf{\to_L}$}
\RightLabel{ }
\BIC{$\Sigma;\Gamma, (A\to (B\to \bot))\to \bot \Rightarrow \Delta$}
\DisplayProof 
\end{gather*}
as
\begin{gather*}
\AXC{$\Sigma;\Gamma, A, B \Rightarrow \bot, \Delta$}
\LeftLabel{$\mathsf{\wedge^\prime_L}$}
\doubleLine
\RightLabel{ .}
\UIC{$\Sigma;\Gamma, A\wedge B \Rightarrow \Delta$}
\DisplayProof 
\end{gather*}
Consider the sequent trees of Fig. 1, where $\Sigma = \{p_j \to \Box (q \wedge  p_{j+1})\mid j \in \mathbb{N} \}$ and $i\in \mathbb{N}$. Let us glue these trees at the nodes marked by $\Sigma; p_i \to \Box (q \wedge p_{i+1}) ,q, p_i \Rightarrow   \Box^+ q$. The resulting tree contains the unique infinite branch, which obviously satisfies conditions (a)-(d). We see an $\infty$-proof of the sequent $\Sigma; p_0 \to \Box (q \wedge p_1), q, p_0\Rightarrow \Box^+ q$.

\begin{center}
\begin{sidewaystable}
\begin{adjustwidth}{0cm}{-2.7cm}
\begin{minipage}{\textwidth}
\bigskip
\begin{gather*}
\footnotesize
\AXC{$\mathsf{Ax}$}
\noLine
\UIC{$\Sigma; p_{i+1} \to \Box (q \wedge p_{i+2}) ,q, p_{i+1} \Rightarrow    q$}
\LeftLabel{$\mathsf{\wedge^\prime_L}$}
\doubleLine
\UIC{$\Sigma; p_{i+1} \to \Box (q \wedge p_{i+2}) ,q\wedge p_{i+1} \Rightarrow    q$}
\AXC{$\Sigma; p_{i+1} \to \Box (q \wedge p_{i+2}) ,q, p_{i+1} \Rightarrow   \Box^+ q$}
\LeftLabel{$\mathsf{\wedge^\prime_L}$}
\doubleLine
\UIC{$\Sigma; p_{i+1} \to \Box (q \wedge p_{i+2}) ,q\wedge p_{i+1} \Rightarrow   \Box^+ q$}
\LeftLabel{$\Box^+$}
\BIC{$\Sigma ;  \Box (q \wedge p_{i+1}),q,p_i\Rightarrow  \Box^+ q$}
\AXC{$\mathsf{Ax}$}
\noLine
\UIC{$\Sigma ;  q,p_i\Rightarrow p_i, \Box^+ q $}
\LeftLabel{$\mathsf{\to_L}$}
\RightLabel{ }
\BIC{$\Sigma; p_i \to \Box (q \wedge p_{i+1}), q,p_i\Rightarrow \Box^+ q$}
\DisplayProof
\end{gather*}
\medskip 
\end{minipage}

\center{\textbf{Fig. 1}}

\begin{minipage}{\textwidth}
\bigskip
\begin{gather*}
\footnotesize
\AXC{$\pi_i$}
\noLine
\UIC{$\vdots$}
\noLine
\UIC{$\Sigma;\Sigma_{i}\Rightarrow B_i\to \Box (C \wedge B_{i+1}) $}
\LeftLabel{$\mathsf{wk}$}
\UIC{$\Sigma;\Sigma_{i}, C,B_i\Rightarrow B_i\to \Box (C \wedge B_{i+1}), \Box^+ C $}
\AXC{$\theta_i$}
\noLine
\UIC{$\vdots$}
\noLine
\UIC{$\Sigma;\Sigma_{i+1} ,C, B_{i+1} \Rightarrow   C$}
\LeftLabel{$\mathsf{\wedge^\prime_L}$}
\doubleLine
\UIC{$\Sigma;\Sigma_{i+1} ,C\wedge B_{i+1} \Rightarrow   C$}
\AXC{$\Sigma;\Sigma_{i+1} ,C, B_{i+1} \Rightarrow   \Box^+ C$}
\LeftLabel{$\mathsf{\wedge^\prime_L}$}
\doubleLine
\UIC{$\Sigma;\Sigma_{i+1} ,C\wedge B_{i+1} \Rightarrow   \Box^+ C$}
\LeftLabel{$\Box^+$}
\BIC{$\Sigma;\Sigma_i, C, B_i,\Box (C \wedge B_{i+1}) \Rightarrow  \Box^+ C$}
\AXC{$\sigma_i$}
\noLine
\UIC{$\vdots$}
\noLine
\UIC{$\Sigma;\Sigma_i, C, B_i\Rightarrow B_i, \Box^+ C $}
\LeftLabel{$\mathsf{\to_L}$}
\BIC{$\Sigma;\Sigma_i, C, B_i,B_i\to \Box C \wedge \Box B_{i+1} \Rightarrow  \Box^+ C $}
\LeftLabel{$\mathsf{cut}$}
\RightLabel{ }
\BIC{$\Sigma;\Sigma_i, C,B_i\Rightarrow \Box^+ C$}
\LeftLabel{$\mathsf{\rightarrow_R}$}
\DisplayProof\\\\
\footnotesize
\AXC{$\pi_0$}
\noLine
\UIC{$\vdots$}
\noLine
\UIC{$\Sigma;\Gamma\Rightarrow B_0\to \Box (C \wedge B_1) $}
\LeftLabel{$\mathsf{wk}$}
\UIC{$\Sigma;\Gamma, B_0\Rightarrow B_0\to \Box (C \wedge B_1), \Box^+ C $}
\AXC{$\theta_0$}
\noLine
\UIC{$\vdots$}
\noLine
\UIC{$\Sigma;\Sigma_1 ,C, B_1 \Rightarrow   C$}
\LeftLabel{$\mathsf{\wedge^\prime_L}$}
\doubleLine
\UIC{$\Sigma;\Sigma_1 ,C\wedge B_1 \Rightarrow   C$}
\AXC{$\Sigma;\Sigma_1 ,C, B_1 \Rightarrow   \Box^+ C$}
\LeftLabel{$\mathsf{\wedge^\prime_L}$}
\doubleLine
\UIC{$\Sigma;\Sigma_1 ,C\wedge B_1 \Rightarrow   \Box^+ C$}
\LeftLabel{$\Box^+$}
\BIC{$\Sigma;\Gamma, B_0,\Box (C \wedge  B_1) \Rightarrow  \Box^+ C$}
\AXC{$\sigma_0$}
\noLine
\UIC{$\vdots$}
\noLine
\UIC{$\Sigma;\Gamma, B_0\Rightarrow B_0, \Box^+ C $}
\LeftLabel{$\mathsf{\to_L}$}
\BIC{$\Sigma;\Gamma, B_0,B_0\to \Box (C \wedge  B_1) \Rightarrow  \Box^+ C $}
\LeftLabel{$\mathsf{cut}$}
\BIC{$\Sigma;\Gamma, B_0\Rightarrow \Box^+ C$}
\LeftLabel{$\mathsf{\rightarrow_R}$}
\RightLabel{ }
\UIC{$\Sigma;\Gamma\Rightarrow B_0 \rightarrow \Box^+ C$}
\DisplayProof
\end{gather*}
\medskip 
\end{minipage}
\center{\textbf{Fig. 2}}
\end{adjustwidth}
\end{sidewaystable}
\end{center}

An $\infty$-proof is called \emph{slim} if every application of a modal rule in the given $\infty$-proof is slim. 
\begin{lemma}\label{AtoA}
For any set of formulas $\Sigma$, any formula $A$ and any finite multisets of formulas $\Gamma$ and $\Delta$, we have $\mathsf{S} \vdash \Sigma;\Gamma,A\Rightarrow A,\Delta$. Moreover, the corresponding cut-free $\infty$-proof can be chosen slim.
\end{lemma}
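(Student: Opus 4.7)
The plan is to proceed by induction on the structure of the formula $A$, constructing the required slim cut-free $\infty$-proof case by case. The atomic cases $A=p$ and $A=\bot$ are immediate: the sequent $\Sigma;\Gamma, p \Rightarrow p,\Delta$ is itself an initial sequent, and $\Sigma;\Gamma,\bot \Rightarrow \bot,\Delta$ is an initial sequent as well.

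For $A = B \to C$, I first apply $\mathsf{\to_R}$ to reduce the goal to $\Sigma;\Gamma, B\to C, B \Rightarrow C,\Delta$, and then apply $\mathsf{\to_L}$ to split it into $\Sigma;\Gamma, B, C \Rightarrow C,\Delta$ and $\Sigma;\Gamma, B \Rightarrow B, C,\Delta$, each of which is provable by the induction hypothesis on $C$ and $B$ respectively. For $A = \Box B$, I apply the rule $(\Box)$ with the choice $\Phi := \Gamma$, $\Psi := \Delta$, $\Lambda := B$, $\Pi := \emptyset$, and $\Sigma_0 := \emptyset$; this reduces the goal to the single premise $\Sigma; B \Rightarrow B$, obtained by the induction hypothesis. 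Since $\Lambda$ and $\Pi$ have no repetitions, this application is slim.

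The main obstacle is the case $A = \Box^+ B$, where a straightforward inductive step is unavailable: applying $(\Box^+)$ with $\Phi := \Gamma$, $\Psi := \Delta$, $\Lambda := \emptyset$, $\Pi := B$, $\Sigma_0 := \emptyset$ produces a left premise $\Sigma; B, \Box^+ B \Rightarrow B$ (handled by the induction hypothesis on $B$) but a right premise $\Sigma; B, \Box^+ B \Rightarrow \Box^+ B$ whose principal formula is again $\Box^+ B$. To handle this I define an $\infty$-proof $\pi$ of $\Sigma; B, \Box^+ B \Rightarrow \Box^+ B$ non-well-foundedly: $\pi$ is the result of applying $(\Box^+)$ with the same parameters, using the slim cut-free proof of $\Sigma; B, \Box^+ B \Rightarrow B$ given by the induction hypothesis as the left premise, and $\pi$ itself as the right premise. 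The $\infty$-proof for the original goal then uses this $\pi$ at its right premise.

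It remains to verify that the constructed tree is a valid $\infty$-proof, i.e.\ that each infinite branch contains a tail satisfying conditions (a)--(d). The only infinite branches inside the $\Box^+$ construction follow the chain of right premises of $(\Box^+)$. Along such a branch every rule application is $(\Box^+)$ with principal formula $\Box^+ B$, it never passes through the left premise of $(\Box^+)$, and it contains no application of $(\Box)$; so all four conditions are satisfied. Slimness is preserved throughout the construction because in every modal inference used we have $\Lambda$ and $\Pi$ of size at most one, and no application of $(\mathsf{cut})$ is ever introduced.
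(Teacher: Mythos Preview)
Your proposal is correct and follows essentially the same approach as the paper: induction on the structure of $A$, with the key $\Box^+$ case handled by a self-referential $\infty$-proof of $\Sigma; B, \Box^+ B \Rightarrow \Box^+ B$ whose right premise is the proof itself, then one more application of $(\Box^+)$ to absorb the arbitrary $\Gamma,\Delta$. The paper only spells out the $\Box^+$ case and leaves the rest to the reader, while you treat all cases explicitly; one minor imprecision is that ``the same parameters'' in your recursive step really means the same $\Lambda,\Pi,\Sigma_0$ (the weakening parts $\Phi,\Psi$ necessarily change), but this does not affect correctness.
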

\begin{proof}
This lemma is proved by induction on the structure of $A$ in a straightforward manner. We check only the case when $A$ has the form $ \Box^+  A_0$. 

By the induction hypothesis, $\mathsf{S} \vdash\Sigma; \Gamma_0,A_0\Rightarrow A_0,\Delta_0$ for any finite multisets of formulas $\Gamma_0$ and $\Delta_0$. Hence, we have a cut-free $\infty$-proof of the sequent $\Sigma;A_0,\Box^+ A_0 \Rightarrow \Box^+ A_0 $:
\[
\AXC{$\sigma$}
\noLine
\UIC{$\vdots$}
\noLine
\UIC{$\Sigma;A_0, \Box^+ A_0 \Rightarrow A_0$}
\AXC{$\pi$}
\noLine
\UIC{$\vdots$}
\noLine
\UIC{$\Sigma;A_0, \Box^+ A_0 \Rightarrow \Box^+ A_0$}
\LeftLabel{$\Box^+$}
\RightLabel{ ,}
\BIC{$\Sigma;A_0,\Box^+ A_0 \Rightarrow \Box^+ A_0 $}
\DisplayProof\] 
where $\sigma$ is a 
slim $\infty$-proof of $\Sigma;A_0,\Box^+ A_0 \Rightarrow  A_0 $ in $\mathsf{S}$, which exists by the induction hypothesis, and $\pi$ is isomorphic to the whole $\infty$-proof. Continuing this $\infty$-proof, we obtain 
\begin{gather*}
\AXC{$\sigma$}
\noLine
\UIC{$\vdots$}
\noLine
\UIC{$\Sigma;A_0, \Box^+A_0 \Rightarrow A_0$}
\AXC{$\Sigma;A_0, \Box^+A_0 \Rightarrow \Box^+ A_0$}
\LeftLabel{$\Box^+$}
\RightLabel{ }
\BIC{$\Sigma;\Gamma,\Box^+ A_0 \Rightarrow \Box^+ A_0 ,\Delta$}
\DisplayProof
\end{gather*}
for arbitrary finite multisets of formulas $\Gamma$ and $\Delta$. Note that the resulting $\infty$-proof is slim.

\end{proof}

Given an $\infty$-proof $\pi$, let us consider the tree $\hat{\pi}$ obtained from $\pi$ by cutting every branch at the first from the root premise of a modal rule. 
The conditions on infinite branches of $\infty$-proofs imply that the resulting tree $\hat{\pi}$ is finite. For example, the tree obtained from (\ref{Example}) consists of a single node 
\[\Sigma ; p, \Box p , \Box^+ F  \Rightarrow  \Box^+ p.\]
We define the \emph{local height $\lvert \pi \rvert$ of an $\infty$-proof $\pi$} as the length of the longest branch in $\hat{\pi}$. If $\hat{\pi}$ consists of a unique node, then $\lvert \pi \rvert=0$.

\begin{lemma}\label{L2}
The inference rule
\begin{gather*}
\AXC{$\Sigma;\Gamma \Rightarrow \Delta$}
\LeftLabel{$\mathsf{wk}$}
\RightLabel{ }
\UIC{$\Sigma;\Phi  , \Gamma \Rightarrow \Delta, \Psi$}
\DisplayProof 
\end{gather*}
is admissible in $\mathsf{S}+\mathsf{cut}$. 
\end{lemma}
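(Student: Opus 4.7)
I would define a weakening operation $\mathsf{W}_{\Phi,\Psi}$ that, given any $\infty$-proof $\pi$ of $\Sigma;\Gamma\Rightarrow\Delta$ in $\mathsf{S}+\mathsf{cut}$, returns an $\infty$-proof of $\Sigma;\Phi,\Gamma\Rightarrow\Delta,\Psi$. The construction proceeds by induction on the local height $|\pi|$, which is well-founded since $\hat\pi$ is finite for every $\infty$-proof. The key structural observation is that the modal rules $(\Box)$ and $(\Box^+)$ already carry arbitrary finite multisets in the $\Phi,\Psi$ slots of their conclusion schema, so weakening the conclusion of a modal inference can be absorbed locally, without recursing into its premises.

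In the base case $|\pi|=0$ the root of $\pi$ is either an initial sequent or a modal inference. Adding $\Phi$ on the left and $\Psi$ on the right of an initial sequent yields another initial sequent. For a modal inference with conclusion $\Sigma;\Phi_0,\Box\Lambda,\Box^+\Pi\Rightarrow\Box A,\Psi_0$ (and analogously for $(\Box^+)$), I re-apply the same rule instance with $\Phi_0$ replaced by $\Phi,\Phi_0$ and $\Psi_0$ by $\Psi_0,\Psi$, leaving the premise subtrees entirely unchanged. In the inductive step, the root of $\pi$ is one of $(\to_L)$, $(\to_R)$, $(\mathsf{cut})$; each premise has strictly smaller local height, so I apply the induction hypothesis with the same $\Phi,\Psi$ to each premise and re-apply the rule at the root.

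The resulting tree preserves the underlying shape of $\pi$, applying a rule of the same type at each corresponding node, so every leaf remains an initial sequent. To verify the tail condition on infinite branches, I would observe that each infinite branch of $\mathsf{W}_{\Phi,\Psi}(\pi)$ corresponds to an infinite branch of $\pi$ following the identical sequence of rule applications, with the same principal formulas and the same choice of premise at every $(\Box^+)$. Consequently any tail of $\pi$ witnessing conditions (a)--(d) transfers to a witnessing tail in $\mathsf{W}_{\Phi,\Psi}(\pi)$. I do not expect a serious obstacle: the construction is essentially routine precisely because modal rules absorb the weakening by themselves, so the induction never needs to push a nontrivial weakening past a modal inference, and the branch condition is preserved for free since the tree of rule applications is unchanged.
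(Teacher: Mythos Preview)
Your proposal is correct and follows exactly the approach indicated in the paper, namely induction on the local height $\lvert\pi\rvert$; the paper simply says ``standardly obtained by induction on the local height'' and omits the details you supply. Your key observation---that the modal rules absorb the weakening locally via their $\Phi,\Psi$ slots so that the subtrees above remain untouched and the branch condition is preserved verbatim---is precisely why induction on local height suffices.
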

\begin{proof}
Admissibility of the rule is standardly obtained by induction on the local height of an $\infty$-proof for $\Sigma;\Gamma \Rightarrow \Delta$. We omit further details.
\end{proof}

\begin{lemma}\label{omegatoinfcut}
If $\Sigma ;\Gamma \vdash_\omega A $, where $\Gamma$ is finite, then $\mathsf{S}+\mathsf{cut} \vdash \Sigma ; \Gamma \Rightarrow A$. 
\end{lemma}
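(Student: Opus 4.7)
The plan is to argue by induction on the structure of the given $\omega$-derivation $\delta$ with root $A$. To make the induction go through cleanly despite the boxed/non-boxed distinction, I would strengthen the statement slightly: for any $\omega$-derivation $\delta$, if $\Sigma$ contains the formulas at every boxed assumption leaf and the multiset $\Gamma$ contains the formulas at every non-boxed assumption leaf (where $\Gamma$ is finite), then $\mathsf{S}+\mathsf{cut} \vdash \Sigma;\Gamma \Rightarrow A$. Weakening (Lemma \ref{L2}) lets us pass freely between a multiset of $\Sigma$-formulas and any enlargement of either context.

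The base cases split into two: if $\delta$ is a single leaf marked by an axiom, I would check by hand that each of the five axiom schemata is provable in $\mathsf{S}$ (using Lemma \ref{AtoA} for instances of $A\Rightarrow A$; the only nontrivial schemata are (iv) and (v), where one unfolds $\Box^+$ using the ($\Box^+$) rule and closes the resulting infinite branch by iterating on the right premise); if $\delta$ is a leaf marked by a formula in $\Gamma$ or $\Sigma$, the conclusion is given by Lemma \ref{AtoA} followed by weakening. For the ($\mathsf{mp}$) step, the two sub-derivations inherit the same boxed/non-boxed leaf structure, so the induction hypothesis yields $\infty$-proofs of $\Sigma;\Gamma\Rightarrow A$ and $\Sigma;\Gamma\Rightarrow A\to B$; a single ($\to_\mathsf{L}$) after an application of Lemma \ref{AtoA} together with ($\mathsf{cut}$) produces $\Sigma;\Gamma\Rightarrow B$.

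The two interesting cases are ($\mathsf{nec}$) and ($\omega$), and both rely on building a non-well-founded proof whose infinite branch satisfies conditions (a)--(d). For ($\mathsf{nec}$) applied to a sub-derivation of $A$, every leaf of that sub-derivation is boxed in the parent, hence marked by a $\Sigma$-formula; the induction hypothesis therefore gives $\mathsf{S}+\mathsf{cut}\vdash \Sigma;\Theta\Rightarrow A$ for a finite $\Theta\subseteq\Sigma$. I would then construct
\begin{gather*}
\AXC{$\pi_A$}
\noLine
\UIC{$\Sigma;\Theta\Rightarrow A$}
\AXC{$\vdots$}
\noLine
\UIC{$\Sigma;\Theta\Rightarrow \Box^+ A$}
\LeftLabel{$\Box^+$}
\BIC{$\Sigma;\Theta\Rightarrow \Box^+ A$}
\DisplayProof
\end{gather*}
where the right branch is isomorphic to the whole tree (take $\Sigma_0=\Theta$, $\Lambda=\Pi=\emptyset$, $\Phi=\Theta$, $\Psi=\emptyset$ in the $(\Box^+)$ rule). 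The unique infinite branch visits only the right premise of a single principal formula $\Box^+ A$ and contains no ($\Box$) applications, so (a)--(d) all hold. Weakening to $\Gamma$ then finishes the case.

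For ($\omega$), the leftmost premise $B_0\to \Box(A\wedge B_1)$ is non-boxed, while each later premise $B_i\to\Box(A\wedge B_{i+1})$ is boxed, so the induction hypothesis yields $\mathsf{S}+\mathsf{cut}\vdash \Sigma;\Gamma\Rightarrow B_0\to\Box(A\wedge B_1)$ and $\mathsf{S}+\mathsf{cut}\vdash \Sigma;\Theta_i\Rightarrow B_i\to\Box(A\wedge B_{i+1})$ for each $i\geq 1$ and a finite $\Theta_i\subseteq\Sigma$. I would then assemble the $\infty$-proof shown in Fig.~2 (with $C:=A$), taking $\pi_i$ to be the $\infty$-proof of $B_i\to\Box(A\wedge B_{i+1})$, $\theta_i$ a proof of $\Sigma;\Theta_{i+1},A,B_{i+1}\Rightarrow A$ from Lemma \ref{AtoA}, $\sigma_i$ the recursive occurrence of the construction at level $i+1$, and an instance of Lemma \ref{AtoA} for the $B_i\Rightarrow B_i,\Box^+ A$ leaf. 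The unique infinite branch passes repeatedly through the right premise of ($\Box^+$) with constant principal formula $\Box^+ A$ and contains no ($\Box$) inference, so conditions (a)--(d) are met. The main obstacle is precisely the bookkeeping in this step: verifying that the recursive shape of the tree really produces a legal $\infty$-proof (in particular that every infinite branch admits a tail satisfying (a)--(d)) and that the context $\Theta_i$ threads through the many applications of ($\mathsf{cut}$), ($\mathsf{wk}$), and the derived rule $(\wedge'_\mathsf{L})$ without accumulating unwanted formulas on the right.
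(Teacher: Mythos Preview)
Your proposal is correct and follows essentially the same approach as the paper: induction on (the ordinal height of) the $\omega$-derivation, with the same treatment of axioms, assumption leaves, $(\mathsf{mp})$, $(\mathsf{nec})$, and the $(\omega)$ rule via the Fig.~2 construction. One small slip: in Fig.~2 the subproof $\sigma_i$ is the Lemma~\ref{AtoA} proof of $\Sigma;\Sigma_i,C,B_i\Rightarrow B_i,\Box^+ C$, not the recursive occurrence; the gluing point for level $i+1$ is the node $\Sigma;\Sigma_{i+1},C,B_{i+1}\Rightarrow\Box^+ C$ above the right $(\wedge'_{\mathsf L})$, and ``unique infinite branch'' should be ``the new infinite branch created by the gluing'' (the $\pi_i$, $\theta_i$, $\sigma_i$ may themselves contain infinite branches, already valid).
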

\begin{proof}
Assume there is an $\omega$-derivation $\delta$ with the root marked by $A$ in which every boxed assumption leafs is marked by some formula from $\Sigma$ and every non-boxed assumption leaf is marked by some formula from $\Gamma$. In addition, assume that $\Gamma$ is finite. We show that the sequent $\Sigma ; \Gamma \Rightarrow A$ is provable in $\mathsf{S}+\mathsf{cut}$ by induction on the ordinal height of $\delta$.

Suppose $\delta$ consists of a single node. If this node is marked by a formula from $\Gamma$, then $A\in \Gamma$ and the sequent $\Sigma ; \Gamma \Rightarrow A$ is provable in $\mathsf{S}+\mathsf{cut} $ by 
Lemma \ref{AtoA}. Otherwise, the single node is marked by an axiom of $\mathsf{K}^+$. We check only the case of Axiom (v): $A$ has the form $\Box A_0 \wedge \Box^+ (A_0\to \Box A_0)\to \Box^+ A_0$. In this case, we have   
\begin{gather*}
\AXC{$\sigma_0$}
\noLine
\UIC{$\vdots$}
\noLine
\UIC{$\Sigma ; A_0 , G, \Box^+ G  \Rightarrow   A_0$}
\AXC{$\pi$}
\noLine
\UIC{$\vdots$}
\noLine
\UIC{$\Sigma ; A_0 ,  \Box A_0, \Box^+ G  \Rightarrow   \Box^+ A_0$}
\AXC{$\sigma_1$}
\noLine
\UIC{$\vdots$}
\noLine
\UIC{$\Sigma ; A_0 ,   \Box^+ G  \Rightarrow    A_0, \Box^+ A_0$}
\LeftLabel{$\mathsf{\rightarrow_L}$}
\BIC{$\Sigma ;A_0 , G, \Box^+ G  \Rightarrow   \Box^+ A_0$}
\LeftLabel{$\mathsf{\Box}^+$}
\RightLabel{ ,}
\BIC{$\Sigma ; A_0, \Box A_0 , \Box^+ G  \Rightarrow  \Box^+ A_0$}
\DisplayProof 
\end{gather*}
where $G= A_0 \rightarrow \Box A_0$, $\sigma_0$ and $\sigma_1$ are 
$\infty$-proofs, which exist by Lemma \ref{AtoA}, and the subtree $\pi$ is isomorphic to the whole $\infty$-proof. Continuing this $\infty$-proof, we obtain 
\begin{gather*}
\AXC{$\sigma_0$}
\noLine
\UIC{$\vdots$}
\noLine
\UIC{$\Sigma ; A_0 , G, \Box^+ G  \Rightarrow   A_0$}
\AXC{$\Sigma ; A_0 ,  \Box A_0, \Box^+ G  \Rightarrow   \Box^+ A_0$}
\AXC{$\sigma_1$}
\noLine
\UIC{$\vdots$}
\noLine
\UIC{$\Sigma ; A_0 ,   \Box^+ G  \Rightarrow    A_0, \Box^+ A_0$}
\LeftLabel{$\mathsf{\rightarrow_L}$}
\BIC{$\Sigma ;A_0 , G, \Box^+ G  \Rightarrow   \Box^+ A_0$}
\LeftLabel{$\mathsf{\Box}^+$}
\BIC{$\Sigma ; \Gamma, \Box A_0 , \Box^+ G  \Rightarrow  \bot,\Box^+ A_0$}
\LeftLabel{$\mathsf{\wedge^\prime_L}$}
\doubleLine
\UIC{$\Sigma ; \Gamma, \Box A_0 \wedge \Box^+ G  \Rightarrow  \Box^+ A_0$}
\LeftLabel{$\mathsf{\to_R}$}
\RightLabel{ .}
\UIC{$\Sigma ; \Gamma \Rightarrow \Box A_0 \wedge \Box^+ G  \to \Box^+ A_0$}
\DisplayProof 
\end{gather*}
Thus, the sequent $ \Sigma ; \Gamma \Rightarrow \Box A_0 \wedge \Box^+ (A_0\to \Box A_0)\to \Box^+ A_0$ is provable in $\mathsf{S}+\mathsf{cut} $.

Suppose $\delta$ has the form
\[\AXC{$\delta_0$}
\noLine
\UIC{$\vdots$}
\noLine
\UIC{$ B$}
\AXC{$\delta_1$}
\noLine
\UIC{$\vdots$}
\noLine
\UIC{$ B\to A$}
\LeftLabel{$\mathsf{mp}$}
\RightLabel{ .}
\BIC{$A$}
\DisplayProof \]
By the induction hypothesis, there are $\infty$-proofs $\pi_0$ and $\pi_1$ for the sequents $\Sigma ; \Gamma \Rightarrow  B$ and $\Sigma ; \Gamma \Rightarrow B\to A$ respectively. We have
\begin{gather*}
\AXC{$\pi_1$}
\noLine
\UIC{$\vdots$}
\noLine
\UIC{$\Sigma ; \Gamma \Rightarrow B\to A$}
\LeftLabel{$\mathsf{wk}$}
\UIC{$\Sigma ; \Gamma \Rightarrow B\to A,A$}
\AXC{$\sigma$}
\noLine
\UIC{$\vdots$}
\noLine
\UIC{$\Sigma ; \Gamma ,  A\Rightarrow A$}
\AXC{$\pi_0$}
\noLine
\UIC{$\vdots$}
\noLine
\UIC{$\Sigma ; \Gamma \Rightarrow B$}
\LeftLabel{$\mathsf{wk}$}
\UIC{$\Sigma ; \Gamma \Rightarrow B,A$}
\LeftLabel{$\mathsf{\to_L}$}
\BIC{$\Sigma ; \Gamma , B\to A\Rightarrow A$}
\LeftLabel{$\mathsf{cut}$}
\RightLabel{ ,}
\BIC{$\Sigma ; \Gamma \Rightarrow A$}
\DisplayProof 
\end{gather*}
where $\sigma$ is an 
$\infty$-proof of $\Sigma ; \Gamma ,  A\Rightarrow A$, which exists by Lemma \ref{AtoA}.
Hence, $\mathsf{S}+\mathsf{cut} \vdash \Sigma ; \Gamma \Rightarrow A$. 

Suppose $\delta$ has the form
\[
\AXC{$\delta_0$}
\noLine
\UIC{$\vdots$}
\noLine
\UIC{$A_0$}
\LeftLabel{$\mathsf{nec}$}
\RightLabel{ .}
\UIC{$ \Box^+ A_0$}
\DisplayProof\]
In this case, all assumption leaves of $\delta_0$ are marked by some formulas from $\Sigma$. Note that $\delta_0$ contains only finitely many non-boxed assumption leaves. Applying the induction hypothesis, we can find an $\infty$-proof $\pi_0$ of a sequent $\Sigma ; \Sigma_0 \Rightarrow  A_0$, where $\Sigma_0\subset \Sigma$. We have
\[
\AXC{$\pi_0$}
\noLine
\UIC{$\vdots$}
\noLine
\UIC{$\Sigma; \Sigma_0\Rightarrow A_0$}
\AXC{$\pi$}
\noLine
\UIC{$\vdots$}
\noLine
\UIC{$\Sigma;\Sigma_0\Rightarrow \Box^+ A_0$}
\LeftLabel{$\Box^+$}
\RightLabel{ ,}
\BIC{$\Sigma;\Sigma_0\Rightarrow \Box^+ A_0$}
\DisplayProof\]
where $\pi$ is isomorphic to the whole $\infty$-proof. Continuing this $\infty$-proof, we obtain
\[
\AXC{$\pi_0$}
\noLine
\UIC{$\vdots$}
\noLine
\UIC{$\Sigma; \Sigma_0\Rightarrow A_0$}
\AXC{$\Sigma;\Sigma_0\Rightarrow \Box^+ A_0$}
\LeftLabel{$\Box^+$}
\RightLabel{ .}
\BIC{$\Sigma;\Gamma\Rightarrow \Box^+ A_0$}
\DisplayProof\]
Therefore, $\mathsf{S}+\mathsf{cut} \vdash \Sigma;\Gamma\Rightarrow \Box^+ A_0$. 


It remains to consider the case when $\delta$ has the form
\[
\AXC{$\delta_0$}
\noLine
\UIC{$\vdots$}
\noLine
\UIC{$B_0 \rightarrow \Box (C \wedge B_1)$}
\AXC{$\delta_1$}
\noLine
\UIC{$\vdots$}
\noLine
\UIC{$B_1 \rightarrow \Box (C \wedge B_2)$}
\AXC{$\delta_2$}
\noLine
\UIC{$\vdots$}
\noLine
\UIC{$B_2 \rightarrow \Box (C \wedge B_3)$}
\AXC{$\dots$}
\LeftLabel{$\omega$}
\RightLabel{ .}
\QIC{$B_0 \rightarrow \Box^+ C$}
\DisplayProof 
\]
Notice that $\Sigma;\Gamma \vdash_\omega B_0 \to \Box (C \wedge  B_1)$ and, for each $i \geqslant 1$, there is a finite subset $\Sigma_i$ of $\Sigma$ such that $\Sigma;\Sigma_i \vdash_\omega B_i \to \Box (C \wedge B_{i+1})$. By the induction hypothesis, there are an $\infty$-proof $\pi_0$ for the sequent $\Sigma ; \Sigma_0 \Rightarrow  B_0 \to \Box (C \wedge B_1)$ and $\infty$-proofs $\pi_i$ for the sequents $\Sigma;\Sigma_i \vdash_\omega B_i \to \Box (C \wedge B_{i+1})$. Consider the proof fragments of Fig. 2, where $\theta_i$ and $\sigma_i$ are $\infty$-proofs, which exist by Lemma \ref{AtoA}. Gluing these fragments at the nodes marked by $\Sigma;\Sigma_i, C,B_i\Rightarrow \Box^+ C$, we obtain an $\infty$-proof for the sequent $\Sigma;\Gamma\Rightarrow B_0 \rightarrow \Box^+ C$. Hence, $\mathsf{S}+\mathsf{cut} \vdash \Sigma ; \Gamma \Rightarrow A$, which concludes the proof.
\end{proof}

\section{Annotations, global heights and fragments}
\label{s4}
In this section, we introduce annotated versions of sequents, inference rules and $\infty$-proofs in order to facilitate our treatment of the calculus $\mathsf{S}+\mathsf{cut}$. We also define several important characteristics of $\infty$-proofs and show that any  $\infty$-proof can be transformed into an $\omega$-derivation of the Frege-Hilbert calculus of the logic $\mathsf{K}^+$.
 
An \emph{annotated sequent} is an expression of the form $\Sigma ;\Gamma \Rightarrow_s \Delta$, where $\Sigma ;\Gamma \Rightarrow \Delta$ is an ordinary sequent and the annotation $s$ is a formula or an auxiliary sign $\circ$. Also, if $s$ is a formula, then the musltiset $\Delta$ must contain $\Box^+ s$. In addition, we put $\mathit{Fm}_ \circ:=\mathit{Fm} \cup \{\circ\}$. Besides, for an $\infty$-proof $\pi$ of a sequent $\Sigma;\Gamma \Rightarrow \Delta$, we denote the finite subset of the set $\mathit{Fm}_\circ$ consisting  of all possible annotations of $\Sigma;\Gamma \Rightarrow \Delta$ by $\mathit{Ann}(\pi)$.  

Annotated versions of initial sequents and inference rules are defined as
\begin{gather*}
\AXC{ $\Sigma ;\Gamma, p \Rightarrow_s p, \Delta, $}
\DisplayProof  \qquad
\AXC{ $\Sigma ;\Gamma , \bot \Rightarrow_s  \Delta,$}
\DisplayProof 
\end{gather*}
\begin{gather*}
\AXC{$\Sigma ;\Gamma , B \Rightarrow_s  \Delta$}
\AXC{$\Sigma ;\Gamma \Rightarrow_s  A, \Delta$}
\LeftLabel{$\mathsf{\rightarrow_L}$}
\BIC{$\Sigma ;\Gamma , A \rightarrow B \Rightarrow_s  \Delta$}
\DisplayProof \;,\quad
\AXC{$\Sigma ;\Gamma, A \Rightarrow_s  B ,\Delta$}
\LeftLabel{$\mathsf{\rightarrow_R}$}
\UIC{$\Sigma ;\Gamma \Rightarrow_s  A \rightarrow B ,\Delta$}
\DisplayProof \;,
\end{gather*}
\begin{gather*}
\AXC{$\Sigma ;\Sigma_0,\Lambda, \Pi, \Box^+ \Pi \Rightarrow_\circ A$}
\LeftLabel{$\mathsf{\Box}$}
\UIC{$\Sigma ; \Phi, \Box \Lambda, \Box^+ \Pi \Rightarrow_s \Box A , \Psi$}
\DisplayProof \;,\\\\
\AXC{$\Sigma ;\Sigma_0,\Lambda, \Pi, \Box^+ \Pi \Rightarrow_\circ A$}
\AXC{$\Sigma ;\Sigma_0,\Lambda, \Pi, \Box^+ \Pi \Rightarrow_A \Box^+ A$}
\LeftLabel{$\Box^+$}
\BIC{$ \Sigma ;\Phi, \Box \Lambda, \Box^+ \Pi \Rightarrow_s  \Box^+ A ,\Psi$}
\DisplayProof \;,
\end{gather*}
\begin{gather*}
\AXC{$\Sigma ;\Gamma\Rightarrow_s \Delta, A$}
\AXC{$\Sigma ;A,\Gamma\Rightarrow_s\Delta$}
\LeftLabel{$\mathsf{cut}$}
\RightLabel{ ,}
\BIC{$\Sigma ;\Gamma\Rightarrow_s\Delta$}
\DisplayProof 
\end{gather*}
where $\Sigma_0$ is a finite subset of $\Sigma$.


An \emph{annotated $\infty$-proof} is a (possibly infinite) tree whose nodes are marked by annotated sequents and that is constructed according to annotated versions of inference rules. In addition, all leaves in an annotated $\infty$-proof are marked by annotated initial sequents, and every infinite branch in it must contain a tail satisfying the following conditions: all sequents in the tail are annotated with the same subscript formula $A$; the tail intersects applications of the annotated rule ($\mathsf{\Box^+}$) on right premises infinitely many times. An annotated $\infty$-proof is \emph{regular} if it contains only finitely many non-isomorphic subtrees with respect to annotations.


Notice that if we erase all annotations in an annotated $\infty$-proof, then the resulting tree is an ordinary $\infty$-proof. For instance, we obtain (\ref{Example}) if we erase annotations in the annotated $\infty$-proof below:
\begin{gather}\label{Example2}
\AXC{$\mathsf{Ax}$}
\noLine
\UIC{$\Sigma ; p , F,   \Box^+ F  \Rightarrow_\circ   p$}
\AXC{$\kappa$}
\noLine
\UIC{$\vdots$}
\noLine
\UIC{$\Sigma ; p ,  \Box p, \Box^+ F  \Rightarrow_p   \Box^+ p$}
\AXC{$\mathsf{Ax}$}
\noLine
\UIC{$\Sigma ; p ,   \Box^+ F  \Rightarrow_p    p, \Box^+ p$}
\LeftLabel{$\mathsf{\rightarrow_L}$}
\BIC{$\Sigma ; p , F, \Box^+ F  \Rightarrow_p   \Box^+ p$}
\LeftLabel{$\mathsf{\Box}^+$}
\RightLabel{ ,}
\BIC{$\Sigma ; p, \Box p , \Box^+ F  \Rightarrow_p  \Box^+ p$}
\DisplayProof 
\end{gather}
where $F= p \to \Box p$ and the subtree $\kappa$ is isomorphic to the whole annotated $\infty$-proof.

On the other hand, given an ordinary $\infty$-proof $\pi$ of a sequent $\Sigma;\Gamma \Rightarrow \Delta$, we can annotate the sequent with $s\in \mathit{Ann}(\pi)$ and, moving upwards away from the root of $\pi$, replace all applications of inference rules in $\pi$ with its annotated versions. Clearly, this procedure gives us an annotated $\infty$-proof of $\Sigma;\Gamma \Rightarrow_s \Delta$, which we denote by $\pi^s$. Notice that, for any regular $\infty$-proof $\pi$ and any $s\in \mathit{Ann}(\pi)$, the annotated $\infty$-proof $\pi^s$ is also regular.

Now we introduce an important notion of global height of an annotated $\infty$-proof.
For nodes $a$ and $b$ of an annotated $\infty$-proof $\kappa$, we set $a \approx_\kappa b$ if the shortest path from $a$ to $b$ in $\kappa$ satisfies the following conditions: it doesn't intersect applications of the rule ($\Box$); it doesn't intersect applications of the rule ($\Box^+$) on left premises; all sequents on the path have the same annotation. Obviously, $\approx_\kappa$ is an equivalence relation on the set of nodes of $\kappa$. Let us consider the quotient set of $\kappa$ by $\approx_\kappa$ as a graph, where an equivalence class $c_1$ is adjacent to another equivalence class $c_2$ if some element of $c_1$ is adjacent to some element of $c_2$ in $\kappa$. Since every equivalence class is a connected subset of $\kappa$, the given graph is a tree, denoted by $\mathit{tr}(\kappa)$, with the root inherited from $\kappa$. 


\begin{propos}
For any annotated $\infty$-proof $\kappa$, the tree $\mathit{tr}(\kappa)$ is well-founded, i.e. there is no infinite branch in $\mathit{tr}(\kappa)$.
\end{propos}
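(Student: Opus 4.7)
The plan is to argue by contradiction. Assume $\mathit{tr}(\kappa)$ carries an infinite branch $[a_0], [a_1], [a_2], \ldots$, where $[a_0]$ is the class of the root of $\kappa$. The first step is to lift this to a genuine infinite branch of $\kappa$. Each equivalence class is a connected subset of the tree $\kappa$ (if $a, b \in [c]$ then every node on the tree-path from $a$ to $b$ lies on a subpath that satisfies the same three conditions, hence also belongs to $[c]$), so it is a subtree with a unique topmost node, which I call $v_i$ for $[a_i]$, with $v_0$ the root of $\kappa$. Since $\kappa$ is a tree, the parent--child edge from $[a_i]$ to $[a_{i+1}]$ in $\mathit{tr}(\kappa)$ is realized by a unique edge $(u_i, v_{i+1})$ of $\kappa$ with $u_i \in [a_i]$; as $u_i$ sits in the subtree rooted at $v_i$, it is a descendant of $v_i$. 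Splicing together, for each $i$, the descending path from $v_i$ to $u_i$ inside $[a_i]$ with the edge $u_i \to v_{i+1}$ produces an infinite branch $\beta$ of $\kappa$ that, by construction, crosses a class boundary at each $(u_i, v_{i+1})$.

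The second step is to contradict this. By the definition of $\approx_\kappa$, every step along $\beta$ that leaves one class for another is either an application of $(\Box)$, a transition onto a left premise of $(\Box^+)$, or a step at which the annotation changes. In particular, $\beta$ contains infinitely many events of these three kinds. On the other hand, by the definition of an annotated $\infty$-proof, $\beta$ must contain a tail $\beta'$ all of whose sequents carry the same subscript $A \in \mathit{Fm}$, which is a \emph{formula}, not $\circ$. Inside $\beta'$ the annotation is constant, so annotation-change steps are excluded. Inspecting the annotated forms of $(\Box)$ and $(\Box^+)$, both the premise of $(\Box)$ and the left premise of $(\Box^+)$ carry the annotation $\circ$; hence neither kind of step can appear in $\beta'$ without forcing the annotation of some sequent in $\beta'$ to equal $\circ$. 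Therefore no class-boundary crossing can happen inside $\beta'$, contradicting the infinitely many such crossings on $\beta$.

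The main obstacle is really only the first step: turning an abstract infinite branch of the quotient $\mathit{tr}(\kappa)$ into an honest infinite branch of $\kappa$ that witnesses the class-boundary crossings. The key enabling fact is that each $\approx_\kappa$-class is a subtree with a unique topmost vertex, so the recipe ``descend inside the current class to the exit edge, then jump to the next class'' is unambiguous and yields a single downward path. Once that lift is in place, the conclusion is just a routine check of how the annotated modal rules propagate annotations.
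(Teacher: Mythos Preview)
Your proposal is correct and follows essentially the same strategy as the paper: lift an infinite branch of $\mathit{tr}(\kappa)$ to an infinite branch of $\kappa$, then invoke the tail condition on annotated $\infty$-proofs to reach a contradiction. Your treatment of the lifting step is more explicit than the paper's (which simply asserts the lifted branch exists), and your contradiction---no class-boundary crossing can occur in the constant-annotation tail because the premise of $(\Box)$ and the left premise of $(\Box^+)$ carry annotation $\circ$---is exactly the contrapositive of the paper's one-line observation that all nodes in the tail are $\approx_\kappa$-equivalent.
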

\begin{proof}
We show that $\mathit{tr}(\kappa)$ is well-founded by \emph{reductio ad absurdum}. Suppose there is an infinite branch in $\mathit{tr}(\kappa)$
\[c_0 \rightarrow c_1 \rightarrow c_2 \rightarrow  \dotsb,\]
where the edges directed away from the root.
Then we have an infinite branch in $\kappa$
\[a_{0,0} \rightarrow a_{0,1} \rightarrow \dotsb \rightarrow a_{0,k_0} \rightarrow a_{1,0}\rightarrow \dotsb \rightarrow a_{1,k_1} \rightarrow a_{2,0} \rightarrow \dotsb\]
such that, for each $i\in \mathbb{N}$, the nodes $a_{i,0}, \dotsc, a_{i, k_i}$ belong to $c_i$.
Since $\kappa$ is an annotated $\infty$-proof, this branch of $\kappa$ must contain a tail such that all sequents in the tail are annotated with the same subscript formula $A$ and the tail passes through the right premise of the rule ($\mathsf{\Box^+}$) infinitely many times. We see that all nodes in the tail are equivalent to each other, which is a contradiction since $c_i \neq c_{j}$ for $i \neq j$. 
Consequently, the tree $\mathit{tr}(\kappa)$ is well-founded.
\end{proof}
 
The \emph{global height $\lVert \kappa \rVert$} of an annotated $\infty$-proof $\kappa$ is defined as the ordinal height of (the root of) $\mathit{tr}(\kappa)$. If $\mathit{tr}(\kappa)$ has only one node, then $\lVert \kappa \rVert=0$. We note that the height $\lVert \kappa \rVert$ is at most countable ordinal since $\mathit{tr}(\kappa)$ has at most a countable number of nodes. For an $\infty$-proof $\pi$ and $s \in \mathit{Ann}(\pi) $, we define the \emph{$s$-height of $\pi$} by putting $\lVert \pi \rVert_s\coloneq\lVert \pi^s \rVert$. The \emph{global height of $\pi$} is defined by setting $\lVert \pi \rVert \coloneq \min\{\lVert \pi \rVert_s  \mid s\in \mathit{Ann}(\pi)\}$.



\begin{lemma}\label{annotation lemma}
For any $\infty$-proof $\pi$ and any $s_1,s_2\in \mathit{Ann}(\pi)$, we have $\lVert \pi \rVert_{s_1} \leqslant \lVert \pi \rVert_{s_2} +1$. 
\end{lemma}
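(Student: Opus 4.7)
The plan is to route the comparison through the distinguished annotation $\circ$, which always lies in $\mathit{Ann}(\pi)$. All cases of the lemma will follow once one establishes the two bounds
\[
\lVert \pi \rVert_s \leqslant \lVert \pi \rVert_\circ \quad\text{and}\quad \lVert \pi \rVert_\circ \leqslant \lVert \pi \rVert_s + 1
\]
for every formula $s \in \mathit{Ann}(\pi)$: chaining these yields $\lVert \pi \rVert_{s_1} \leqslant \lVert \pi \rVert_\circ \leqslant \lVert \pi \rVert_{s_2} + 1$ when both $s_i$ are formulas, and the remaining cases (one or both annotations equal to $\circ$) are immediate from the same bounds or trivial.

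The structural step is to describe $\mathit{tr}(\pi^s)$ as a simple modification of $\mathit{tr}(\pi^\circ)$. Let $I$ denote the maximal connected subtree of $\pi$ containing the root whose edges are all non-modal. Propagating annotations upward, a node $v$ of $\pi$ carries, in $\pi^s$, annotation $s$ precisely when $v \in I$, and otherwise a constant determined by the nearest modal rule on the path to the root (either $\circ$ or the body of a $\Box^+$-principal). Since ($\Box$)-edges and left-$\Box^+$-premise edges are unconditional breaks, and a right-$\Box^+$-premise edge with principal $\Box^+ A$ is a break precisely when its conclusion's annotation is not $A$, the only edges whose break status depends on $s$ are the right-$\Box^+$-edges with conclusion in $I$ and principal $\Box^+ s$. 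In $\pi^\circ$ all such edges are breaks (since $\circ$ is not a formula and never matches $A$), so the root class of $\pi^\circ$ is exactly $I$, and the children of the root in $\mathit{tr}(\pi^\circ)$ are the classes of the various modal-rule premises adjacent to $I$. In $\pi^s$ (with $s$ a formula), the edges in question become non-breaks, so every mode-$s$ pocket adjacent to $I$---which is already a single class in $\pi^\circ$ and a child of the root in $\mathit{tr}(\pi^\circ)$---is absorbed into the root class. All edges strictly outside $I$ carry $s$-independent annotations, so every other class, together with its subtree in $\mathit{tr}$, is unaffected. Thus $\mathit{tr}(\pi^s)$ is obtained from $\mathit{tr}(\pi^\circ)$ by a root-level contraction: a set $\mathcal{M}$ of children of the root is merged into the root, and each $m\in \mathcal{M}$'s children are promoted to children of the root.

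A direct ordinal calculation now yields both bounds. Writing $\mathcal{G}$ for the unchanged children of the root and $h$ for the ordinal-height function on $\mathit{tr}(\pi^\circ)$,
\[
\lVert \pi \rVert_\circ = \max\!\bigl(\sup_{g \in \mathcal{G}}(h(g)+1),\; \sup_{m \in \mathcal{M}}(h(m)+1)\bigr),
\]
and $\lVert \pi \rVert_s$ is given by the same expression with $h(m)+1$ replaced by $\sup_{g'}(h(g')+1) = h(m)$ (the supremum being taken over the children $g'$ of $m$, with a leaf $m$ simply disappearing). Since for any family of ordinals one has $\sup_m h(m) \leqslant \sup_m(h(m)+1) \leqslant \sup_m h(m)+1$, the desired bounds follow. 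The main subtlety I anticipate is the careful justification of the root-level contraction step: ensuring that no deeper change in $\mathit{tr}$ occurs when we pass between $\pi^\circ$ and $\pi^s$. This hinges on the invariance of annotations at every node outside $I$, which forces the graph of non-break edges outside $I$ to be the same for both annotations.
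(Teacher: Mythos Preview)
Your proof is correct and follows essentially the same approach as the paper's: route through $\circ$, observe that $\mathit{tr}(\pi^{s})$ is obtained from $\mathit{tr}(\pi^{\circ})$ by merging the root with some of its children, and derive the sandwich $\lVert\pi\rVert_s \leqslant \lVert\pi\rVert_\circ \leqslant \lVert\pi\rVert_s + 1$. The paper states the root-level contraction in one sentence and declares the height bounds ``trivial''; you spell out why only the root-adjacent right-$\Box^+$-edges with principal $\Box^+ s$ change break status and make the ordinal computation explicit, but the argument is the same.
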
 
\begin{proof}
First, suppose $s_1=\circ$. In this case, $\mathit{tr}(\pi^{s_2})$ can obtained from $\mathit{tr}(\pi^{s_1})$ by merging the root of $\mathit{tr}(\pi^{s_1})$ with some (possibly none) of its children. 
Trivially, we have $\lVert \pi \rVert_{s_2} \leqslant \lVert \pi \rVert_{s_1} \leqslant \lVert \pi \rVert_{s_2}+1$. 


Now we consider the general case of arbitrary $s_1,s_2\in\mathit{Ann}(\pi)$. From the previous case, we see $\lVert \pi \rVert_{s_1} \leqslant \lVert \pi \rVert_\circ \leqslant \lVert \pi \rVert_{s_1}+1$ and $\lVert \pi \rVert_{s_2} \leqslant \lVert \pi \rVert_\circ \leqslant \lVert \pi \rVert_{s_2}+1$. Consequently, we obtain $\lVert \pi \rVert_{s_1} \leqslant \lVert \pi \rVert_\circ \leqslant \lVert \pi \rVert_{s_2}+1$, which concludes the proof.
\end{proof}

Now, for an annotated $\infty$-proof $\kappa$, we define a series of its fragments approximating $\kappa$. Let us consider an application of the rule ($\Box^+$) in $\kappa$ whose right premise is equivalent to the root of $\kappa$ with respect to $\approx_\kappa$.
This application has \emph{level $n$}, for $n>0$, if the path from the root of $\kappa$ to its right premise passes through exactly $n$ applications of the rule ($\Box^+$). 
We define the \emph{$n$-fragment of $\kappa$} as the tree obtained from $\kappa$ by cutting off every right premise of the rule ($\Box^+$) of level $n$ so that this premise and all its descendants are removed. For example, consider the $2$-fragment of (\ref{Example2}):
\begin{gather*}
\AXC{$\mathsf{Ax}$}
\noLine
\UIC{$\Sigma ;p , F,   \Box^+ F  \Rightarrow_\circ   p$}
\AXC{$\mathsf{Ax}$}
\noLine
\UIC{$\Sigma ; p , F,   \Box^+ F  \Rightarrow_\circ   p$}
\AXC{$\qquad \qquad \quad$}
\LeftLabel{$\mathsf{\Box}^+$}
\BIC{$\Sigma ; p, \Box p , \Box^+ F  \Rightarrow_p  \Box^+ p$}
\AXC{$\mathsf{Ax}$}
\noLine
\UIC{$\Sigma ; p ,   \Box^+ F  \Rightarrow_p    p, \Box^+ p$}
\LeftLabel{$\mathsf{\rightarrow_L}$}
\BIC{$\Sigma ; p , F, \Box^+ F  \Rightarrow_p   \Box^+ p$}
\LeftLabel{$\mathsf{\Box}^+$}
\RightLabel{ .}
\BIC{$\Sigma ; p, \Box p , \Box^+ F  \Rightarrow_p  \Box^+ p$}
\DisplayProof 
\end{gather*}

By $\mathsf{P}$, we denote the set of all $\infty$-proofs of the calculus $\mathsf{S}=\mathsf{cut}$. In addition, we put $\mathsf{P}(s)\coloneq \{\pi\in \mathsf{P}\mid s\in \mathit{Ann}(\pi)\}$. For $\pi, \tau \in \mathsf{P}(s)$ and $n>0$, we write $\pi \sim^s_n \tau$ if the $n$-fragments of annotated $\infty$-proofs $\pi^s$ and $\tau^s$ coincide. Also, we set $\pi \sim^s_0 \tau$ for all $\pi, \tau \in \mathsf{P}(s)$. 
Notice that, for $s\in \mathit{Fm}_\circ$, $(\sim^s_n)_{n\in\mathbb{N}}$ is a family of equivalence relations on $\mathsf{P}(s)$. Besides, $\pi \sim^s_n \tau$ if $\pi \sim^s_{n+1} \tau$, and $\pi = \tau$ whenever $\pi \sim^s_n \tau$ for every $n\in \mathbb{N}$. Note also that $\pi=\tau$ whenever $\pi \sim^s_1 \tau$ and $\pi \sim^w_1 \tau$ for $ s\neq w$.

By means of equivalence relations $(\sim^s_n)_{n\in\mathbb{N}}$, we define limits of sequences of $\infty$-proofs.

\begin{propos}\label{Completeness1} For any $s\in \mathit{Fm}_\circ$ and any sequence of $\infty$-proofs
\[\pi_0 \sim^s_0 \pi_1 \sim^s_1 \pi_2 \sim^s_2 \dotso,\]
there exists a unique $\infty$-proof denoted by $\lim\limits_{i\to \infty} \pi_i$ such that $\pi_n \sim^s_n \lim\limits_{i \to \infty} \pi_i $ for each $n\in \mathbb{N}$. Moreover, $\lim\limits_{i\to \infty} \pi_i$ does not depend on $s$. In other words, if additionally
\[\pi_0 \sim^w_0 \pi_1 \sim^w_1 \pi_2 \sim^w_2 \dotso\]
for $w\neq s$, then $\pi_n \sim^w_n \lim\limits_{i \to \infty} \pi_i $ for each $n\in \mathbb{N}$. Finally, $\mathit{Ann}(\lim\limits_{i\to \infty} \pi_i)= \mathit{Ann}(\pi_1)$. 
\end{propos}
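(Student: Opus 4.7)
The plan is to construct $\lim\limits_{i\to\infty}\pi_i$ as a direct limit of annotated $n$-fragments. For each $n$ I will set $F_n$ to be the $n$-fragment of $\pi_n^s$. The hypothesis $\pi_n \sim^s_n \pi_{n+1}$ says precisely that $F_n$ equals the $n$-fragment of $\pi_{n+1}^s$; since the $n$-fragment of any annotated $\infty$-proof is contained in its $(n+1)$-fragment, we obtain $F_n \subseteq F_{n+1}$. The limit annotated tree is then defined as $T \coloneq \bigcup_n F_n$.

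To verify that $T$ is an annotated $\infty$-proof, I will argue as follows. Every leaf of $T$ is a leaf of some $F_n$ that is not a level-$n$ cut-point, since any such cut-point acquires children in $F_{n+1}$; hence every leaf of $T$ is an annotated initial sequent. For infinite branches there are two cases. Either the branch is entirely contained in some $F_n$ — in which case it is already an infinite branch of $\pi_n^s$ and inherits the required tail — or it escapes every $F_n$, so for each $n$ it passes through the right premise of some level-$n$ application of $(\Box^+)$. By definition of level, every such right premise is reached from the root by a path that keeps the annotation $s$ and never crosses $(\Box)$ or a left premise of $(\Box^+)$, so the branch eventually enters a tail of sequents all annotated $s$ and passes through right premises of $(\Box^+)$ infinitely often.

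I will then set $\pi_\infty \coloneq \lim\limits_{i\to\infty}\pi_i$ to be the $\infty$-proof underlying $T$. Since annotations in $\pi^s$ are uniquely determined by the root annotation and the rule structure, $(\pi_\infty)^s = T$; consequently the $n$-fragment of $(\pi_\infty)^s$ equals $F_n$, giving $\pi_n \sim^s_n \pi_\infty$. Uniqueness of such a $\pi_\infty$ follows from the observation recalled earlier that $\pi \sim^s_n \tau$ for every $n$ forces $\pi = \tau$. From $\pi_1 \sim^s_1 \pi_\infty$ the root sequents of $\pi_1$ and $\pi_\infty$ coincide, which yields $\mathit{Ann}(\pi_\infty) = \mathit{Ann}(\pi_1)$.

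The delicate part is independence from the choice of $s$. Under the extra hypothesis $\pi_n \sim^w_n \pi_{n+1}$ with $w \neq s$, I will repeat the construction with $w$ to obtain an $\infty$-proof $\pi'_\infty$ satisfying $\pi_n \sim^w_n \pi'_\infty$, and the task reduces to showing $\pi_\infty = \pi'_\infty$. The key observation will be that any node at depth at most $d$ in $\pi_n$ is reached by a path crossing at most $d$ applications of $(\Box^+)$, so for $n > d$ such a node lies in the underlying tree of $F_n$; since this underlying tree is a common subtree of $\pi_n$ and $\pi_{n+1}$, the depth-$d$ initial segments of $\pi_n$ stabilize as $n$ grows. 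The same argument with $w$-fragments yields the same stable value, forcing $\pi_\infty = \pi'_\infty$. I expect the main obstacle to be precisely the tail condition for escaping branches in $T$: one must carefully track that the annotation along such a branch stays $s$ and that no $(\Box)$ or left-premise-of-$(\Box^+)$ step is ever taken, so that the right premises of $(\Box^+)$ traversed really are equivalent to the root with respect to the equivalence $\approx_T$.
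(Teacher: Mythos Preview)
Your construction of the limit and the verification that it is an annotated $\infty$-proof are correct and essentially what the paper has in mind when it says ``trivially'' for existence and uniqueness; you have simply unpacked the details. The argument for $\mathit{Ann}(\pi_\infty)=\mathit{Ann}(\pi_1)$ is likewise identical to the paper's.

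Where you diverge from the paper is in the independence-from-$s$ argument. Your approach---build a second limit $\pi'_\infty$ using $w$ and then show $\pi_\infty=\pi'_\infty$ by a depth-stabilization argument---works, but it is considerably more laborious than what the paper does. The paper exploits the fact, recorded just before the proposition, that $\pi\sim^s_1\tau$ together with $\pi\sim^w_1\tau$ for $s\neq w$ already forces $\pi=\tau$. (The reason is that the $1$-fragment of $\pi^s$ omits only the subtrees above the first layer of right premises of $(\Box^+)$ with principal formula $\Box^+ s$; those very subtrees are retained in the $1$-fragment of $\pi^w$, so the two $1$-fragments jointly cover the whole proof.) Applying this to each pair $\pi_n\sim^s_n\pi_{n+1}$, $\pi_n\sim^w_n\pi_{n+1}$ with $n\geq 1$ gives $\pi_1=\pi_2=\pi_3=\dotsb$, so the sequence is constant from index~$1$ and the limit is simply $\pi_1$; independence and the relation $\pi_n\sim^w_n\lim_i\pi_i$ then become trivial. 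Your route is more self-contained (it does not rely on that earlier observation), while the paper's route is a one-liner once that observation is in hand.
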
 
\begin{proof}
Trivially, for a sequence of $\infty$-proofs
\[\pi_0 \sim^s_0 \pi_1 \sim^s_1 \pi_2 \sim^s_2 \dotso,\]
there exists the required $\infty$-proof $\lim\limits_{i\to \infty} \pi_i$, and $\lim\limits_{i\to \infty} \pi_i$ is uniquely defined. 

In addition, if
\[\pi_0 \sim^w_0 \pi_1 \sim^w_1 \pi_2 \sim^w_2 \dotso\]
for $w\neq s$, then $\pi_1 = \pi_2 = \dotso= \lim\limits_{i\to \infty} \pi_i$. Consequently, $\pi_n \sim^w_n \lim\limits_{i \to \infty} \pi_i $ for each $n\in \mathbb{N}$. 

Since $\pi_1 \sim^s_1\lim\limits_{i\to \infty} \pi_i$, we obtain $\mathit{Ann}(\lim\limits_{i\to \infty} \pi_i)= \mathit{Ann}(\pi_1)$.

\end{proof}

In the rest of the section, we show that any $\infty$-proof can be transformed into an $\omega$-derivation of the logic $\mathsf{K}^+$.
\begin{lemma}\label{inftoomega}
Suppose there is an annotated $\infty$-proof $\kappa$ of a sequent $\Sigma ; \Gamma \Rightarrow_s \Delta$. Then $\Sigma ;\emptyset \vdash_\omega \bigwedge \Gamma\to \bigvee \Delta $.
\end{lemma}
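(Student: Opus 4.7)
I propose to argue by transfinite induction on the global height $\lVert \kappa \rVert$, which is a well-defined countable ordinal since $\mathit{tr}(\kappa)$ is well-founded. In the base case $\lVert \kappa \rVert = 0$, the tree $\mathit{tr}(\kappa)$ is a single point and $\kappa$ accordingly contains no applications of $(\Box)$ or $(\Box^+)$. Hence $\kappa$ is a finite derivation built only from $(\rightarrow_\mathsf{L})$, $(\rightarrow_\mathsf{R})$ and $(\mathsf{cut})$, so $\bigwedge\Gamma \to \bigvee\Delta$ is a classical tautology, which is already an axiom of $\mathsf{K}^+$, and a one-node $\omega$-derivation witnesses $\Sigma;\emptyset \vdash_\omega \bigwedge\Gamma \to \bigvee\Delta$.

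For the inductive step, the key object of analysis is the equivalence class $C$ of the root of $\kappa$ under $\approx_\kappa$. All sequents of $C$ share the annotation $s$, and the only modal rules internal to $C$ are applications of $(\Box^+)$ whose right premise also lies in $C$; this forces $s$ to be a formula that coincides with the principal subformula. Every other premise of a modal rule in $C$ is an exit point, and the subtree of $\kappa$ rooted at any such point has strictly smaller global height. By the induction hypothesis, each such exit premise $\Sigma;\Gamma' \Rightarrow_{s'} \Delta'$ admits an $\omega$-derivation of $\Sigma;\emptyset \vdash_\omega \bigwedge\Gamma' \to \bigvee\Delta'$, and since its non-boxed assumption multiset is empty, the rule $(\mathsf{nec})$ can be freely applied to it.

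The exit-point $\omega$-derivations are then stitched together by traversing $C$ from its exit points toward the root. Propositional rules and cut translate to propositional reasoning combined with applications of $(\mathsf{mp})$; an exit via $(\Box)$ is simulated by necessitating the exit-point $\omega$-derivation and combining it with Axioms (ii) and (iv) to extract $\Box A$ from the boxed context $\Box\Lambda, \Box^+\Pi$; an exit via both premises of $(\Box^+)$ is handled analogously using Axioms (iii)--(v), which express $\Box^+ A$ in terms of $\Box A$ and $\Box^+(A \to \Box A)$.

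The main obstacle is the case when $C$ contains an infinite branch. By the side condition on $\infty$-proofs, such a branch must have a tail traversing $(\Box^+)$-right-premises infinitely often with fixed principal formula $\Box^+ A$ and fixed annotation $A = s$. Here the finite stitching cannot terminate, and instead the rule $(\omega)$ must be invoked. The induction hypothesis applied to the left premises of the successive $(\Box^+)$-applications along the tail, combined with necessitation, Axioms (ii) and (iv), and propositional manipulation accounting for the propositional steps of $C$ between consecutive $(\Box^+)$-applications, produces the infinite sequence of premises $B_i \to \Box(A \wedge B_{i+1})$ demanded by $(\omega)$; its conclusion $B_0 \to \Box^+ A$ then yields $\bigwedge\Gamma \to \bigvee\Delta$ by propositional reasoning at the root of $C$. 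Verifying that the assembled tree is a bona fide $\omega$-derivation (every assumption leaf boxed and drawn from $\Sigma$) is the delicate part.
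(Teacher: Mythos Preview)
Your overall architecture matches the paper's: transfinite induction on $\lVert\kappa\rVert$, analysis of the root's $\approx_\kappa$-class, induction hypothesis at exit points, and an appeal to $(\omega)$ when the annotation $s$ is a formula. The base case and the stitching for the case $s=\circ$ (where the root class is finite and purely propositional between exit points) are fine.

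The gap is in your treatment of the case where the root class $C$ carries a formula annotation $s=A$. You describe following ``the successive $(\Box^+)$-applications along the tail'' of a single infinite branch and producing formulas $B_i$ from that branch. But $C$ is not a single thread: between two consecutive $(\Box^+)$-right-premises there may be applications of $(\rightarrow_\mathsf{L})$ or $(\mathsf{cut})$, whose other premises also lie in $C$ and may themselves continue through infinitely many $(\Box^+)$'s. Consequently the ``state after $i$ steps of $(\Box^+)$'' is not a single sequent but a whole family of sequents scattered across $C$, and $B_i$ cannot be read off a single branch. The paper handles this by setting, for each node $a\in C$, $G_a:=\bigwedge\Gamma_a\wedge\neg\bigvee\Delta_a$, and taking
\[
H_i \;:=\; \bigvee\{\,G_a \mid a\in C\text{ belongs to the $(i{+}1)$-fragment of }\kappa\,\},
\]
i.e.\ the disjunction over \emph{all} nodes of $C$ reachable by exactly $i$ passes through $(\Box^+)$-right-premises. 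One then proves $\Sigma;\emptyset\vdash_\omega H_i\to\Box(A\wedge H_{i+1})$ by a separate induction on the \emph{local} height $\lvert\kappa_a\rvert$ of each contributing node $a$, which is precisely the mechanism that absorbs the propositional branching inside $C$. Your proposal neither supplies this definition nor the local-height subinduction that establishes the $(\omega)$-premises; without them the argument does not go through. Incidentally, the part you flag as ``delicate'' (that every assumption leaf is boxed and from $\Sigma$) is automatic, since all the $\omega$-derivations produced have empty non-boxed assumption set; the genuinely delicate step is the one you left implicit.
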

\begin{proof}
We prove that $\Sigma ;\emptyset \vdash_\omega \bigwedge \Gamma\to \bigvee \Delta $ by induction on $\lVert \kappa \rVert$.

Case 1. Suppose $s=\circ$. We proceed by subinduction on the local height of $\kappa$. 

If $\kappa$ consists of a single node, then $\Sigma ; \Gamma \Rightarrow_s \Delta$ is an initial sequent. We see that $\mathsf{K}^+\vdash \bigwedge \Gamma\to \bigvee \Delta  $ and $\Sigma ;\emptyset \vdash_\omega \bigwedge \Gamma\to \bigvee \Delta $.

If $\kappa$ has the form 
\[\AXC{$\kappa_0$}
\noLine
\UIC{$\vdots$}
\noLine
\UIC{$\Sigma;\Sigma_0, \Lambda, \Pi, \Box^+ \Pi \Rightarrow_\circ  B$}
\LeftLabel{$\Box$}
\RightLabel{ ,}
\UIC{$\Sigma;\Phi ,\Box \Lambda, \Box^+ \Pi \Rightarrow_\circ \Box B,  \Psi$}
\DisplayProof\]
then, by the induction hypothesis, $\Sigma ;\emptyset \vdash_\omega \bigwedge (\Sigma_0\cup\Lambda\cup \Pi
\cup \Box^+ \Pi) \to B$, $\Sigma ;\Sigma  \vdash_\omega \bigwedge (\Lambda\cup \Pi
\cup \Box^+ \Pi) \to B$ , $\Sigma ;\emptyset \vdash_\omega \Box^+ (\bigwedge (\Lambda\cup \Pi
\cup \Box^+ \Pi) \to B)$ and $\Sigma ;\emptyset \vdash_\omega \Box (\bigwedge (\Lambda\cup \Pi
\cup \Box^+ \Pi) \to B)$. In addition, we see that $\mathsf{K}^+\vdash  \Box (\bigwedge (\Lambda\cup \Pi
\cup \Box^+ \Pi) \to B)\to (\bigwedge (\Phi
\cup
\Box \Lambda\cup \Box^+ \Pi)\to \bigvee (\{\Box B\}\cup \Psi) )$. Hence, $\Sigma ;\emptyset \vdash_\omega \bigwedge (\Phi
\cup
\Box \Lambda\cup \Box^+ \Pi)\to \bigvee (\{\Box B\}\cup \Psi) $.

Suppose $\kappa$ has the form 
\[\AXC{$\kappa_0$}
\noLine
\UIC{$\vdots$}
\noLine
\UIC{$\Sigma; \Sigma_0, \Lambda, \Pi, \Box^+ \Pi \Rightarrow_\circ  B$}
\AXC{$\kappa_1$}
\noLine
\UIC{$\vdots$}
\noLine
\UIC{$\Sigma; \Sigma_0,\Lambda, \Pi, \Box^+ \Pi \Rightarrow_B \Box^+ B$}
\LeftLabel{$\Box^+$}
\RightLabel{ .}
\BIC{$\Sigma;\Phi, \Box \Lambda, \Box^+ \Pi \Rightarrow_\circ \Box^+ B,  \Psi$}
\DisplayProof 
\]
Notice that $\lVert \kappa_0\rVert <\lVert \kappa\rVert$ and $\lVert \kappa_1\rVert <\lVert \kappa\rVert$. From the induction hypothesis, $\Sigma ;\emptyset \vdash_\omega \bigwedge (\Sigma_0\cup\Lambda\cup \Pi
\cup \Box^+ \Pi) \to B$ and $\Sigma ;\emptyset \vdash_\omega \bigwedge (\Sigma_0\cup\Lambda\cup \Pi
\cup \Box^+ \Pi) \to \Box^+ B$. Hence, $\Sigma ;\emptyset \vdash_\omega \bigwedge (\Sigma_0\cup\Lambda\cup \Pi
\cup \Box^+ \Pi) \to B\wedge \Box^+ B$ and $\Sigma ;\emptyset \vdash_\omega \Box (\bigwedge (\Lambda\cup \Pi
\cup \Box^+ \Pi) \to B\wedge \Box^+ B)$. Since $\mathsf{K}^+\vdash \Box (B\wedge \Box^+ B)\to \Box^+ B$, we obtain $\Sigma ;\emptyset \vdash_\omega \bigwedge (\Phi\cup\Box \Lambda\cup \Box^+ \Pi)\to \bigvee (\{\Box^+ B\}\cup \Psi) $. 

Suppose $\kappa$ has the form  
\[
\AXC{$\kappa_0$}
\noLine
\UIC{$\vdots$}
\noLine
\UIC{$\Sigma;\Lambda , B \Rightarrow_\circ  \Delta$}
\AXC{$\kappa_1$}
\noLine
\UIC{$\vdots$}
\noLine
\UIC{$\Sigma;\Lambda \Rightarrow_\circ  A, \Delta$}
\LeftLabel{$\mathsf{\rightarrow_L}$}
\RightLabel{ .}
\BIC{$\Sigma;\Lambda , A \rightarrow B \Rightarrow_\circ  \Delta$}
\DisplayProof 
\]
Since $\lvert \kappa_0\rvert <\lvert \kappa\rvert$ and $\lvert \kappa_1\rvert <\lvert \kappa\rvert$, $\Sigma ;\emptyset \vdash_\omega \bigwedge (\Lambda\cup \{B\})\to \bigvee\Delta$ and $\Sigma ;\emptyset \vdash_\omega \bigwedge \Lambda\to \bigvee(\{A\}\cup\Delta)$ by the subinduction hypothesis for $\kappa_0$ and $\kappa_1$. We see $\mathsf{K}^+\vdash (\bigwedge (\Lambda\cup \{B\})\to \bigvee\Delta)\wedge (\bigwedge \Lambda\to \bigvee(\{A\}\cup\Delta))\to \bigwedge (\Lambda\cup \{A\to B\})\to \bigvee\Delta$. Therefore, $\Sigma ;\emptyset \vdash_\omega \bigwedge (\Lambda\cup \{A\to B\})\to \bigvee\Delta$. 

We omit the subcases of inference rules ($\mathsf{\rightarrow_R}$) and ($\mathsf{cut}$), because they are analogous to the subcase of the rule ($\mathsf{\rightarrow_L}$).

Case 2. Suppose $s=C\in \mathit{Fm}$. Let $R$ be the equivalence class of the root of $\kappa$ with respect to $\approx_\kappa$, i.e., the root of $\mathit{tr}(\kappa)$. Note that, for any $a\in R$, the sequent of the node $a$ has the form $\Sigma;\Gamma_a\Rightarrow_C\Delta_a, \Box^+ C$. Besides, for any $a\in R$, we put $G_a\coloneq \bigwedge \Gamma_a\wedge \neg
\bigvee \Delta_a$ and $\mathit{rk}(a) \coloneq \lvert\kappa_a \rvert $, where $\kappa_a$ is the subtree of $\kappa$ with the root $a$. Trivially, $\mathsf{K}^+\vdash \bigwedge \Gamma_a\to \bigvee (\Delta_a\cup \{G_a\})$ and \begin{gather}\label{form1}
\Sigma ;\emptyset \vdash_\omega \bigwedge \Gamma_a\to \bigvee (\Delta_a\cup \{G_a\}).
\end{gather}

Recall that a node $a$ from $R$ belongs to the $n$-fragment of $\kappa$ if there are precisely $n-1$ applications of the rule ($\Box^+$) on the path from this node to the root of $\kappa$. Let $H_i\coloneq \bigvee \{G_a\mid a\in R \text{ and $a$ belongs to the ($i+1$)-fragment of $\kappa$}\}$.

We claim that $\Sigma ;\emptyset \vdash_\omega  H_i\to \Box (C\wedge H_{i+1}) $ for any $i\in \mathbb{N}$. It is sufficient to show that $\Sigma ;\emptyset \vdash_\omega  G_a\to \Box (C\wedge H_{i+1}) $ whenever $a$ belongs to the ($i+1$)-fragment of $\kappa$ and $a\in R$. We proceed by induction on $\mathit{rk}(a)$.

If $\kappa_a$ consists of a single node, then $\Sigma ; \Gamma_a \Rightarrow_C \Delta_a, \Box^+ C$ is an annotated initial sequent. We see that $\mathsf{K}^+\vdash \bigwedge \Gamma_a\to \bigvee \Delta_a  $, $\mathsf{K}^+\vdash \neg G_a$, $\Sigma ;\emptyset \vdash_\omega\neg G_a$ and $G_a\to \Box (C\wedge H_{i+1})  $.

Suppose $\kappa_a$ has the form 
\[\AXC{$\kappa^\prime_a$}
\noLine
\UIC{$\vdots$}
\noLine
\UIC{$\Sigma;\Sigma_0, \Lambda, \Pi, \Box^+ \Pi \Rightarrow_\circ  B$}
\LeftLabel{$\Box$}
\RightLabel{ .}
\UIC{$\Sigma;\Phi ,\Box \Lambda, \Box^+ \Pi \Rightarrow_C \Box B,  \Psi, \Box^+C$}
\DisplayProof\]
We see that $\lVert \kappa^\prime_a\rVert <\lVert \kappa_a\rVert\leqslant\lVert \kappa\rVert$. By the induction hypothesis, $\Sigma ;\emptyset \vdash_\omega \bigwedge (\Sigma_0\cup\Lambda\cup \Pi\cup \Box^+ \Pi) \to B$, $\Sigma ;\Sigma \vdash_\omega \bigwedge (\Lambda\cup \Pi \cup \Box^+ \Pi) \to B$ and $\Sigma ;\emptyset \vdash_\omega \bigwedge (\Box \Lambda\cup  \Box^+ \Pi) \to\Box  B$. Hence, $\Sigma ;\emptyset \vdash_\omega \bigwedge (\Phi\cup\Box \Lambda\cup \Box^+ \Pi)\to \bigvee (\{\Box B\}\cup \Psi\cup \{\Box (C\wedge H_{i+1})\}) $. Note that $\Gamma_a=\Phi,\Box \Lambda, \Box^+ \Pi$ and $\Delta_a=\Box B, \Psi$. Therefore, $\Sigma ;\emptyset \vdash_\omega G_a\to \Box (C\wedge H_{i+1})  $.

Suppose $\kappa_a$ has the form 
\[\AXC{$\kappa^\prime_a$}
\noLine
\UIC{$\vdots$}
\noLine
\UIC{$\Sigma; \Sigma_0, \Lambda, \Pi, \Box^+ \Pi \Rightarrow_\circ  B$}
\AXC{$\kappa^{\prime\prime}_a$}
\noLine
\UIC{$\vdots$}
\noLine
\UIC{$\Sigma; \Sigma_0,\Lambda, \Pi, \Box^+ \Pi \Rightarrow_B \Box^+ B$}
\LeftLabel{$\Box^+$}
\RightLabel{ ,}
\BIC{$\Sigma;\Phi, \Box \Lambda, \Box^+ \Pi \Rightarrow_C \Box^+ B,  \Psi, \Box^+ C$}
\DisplayProof 
\]
where $C\neq B$. Notice that $\lVert \kappa^\prime_a\rVert <\lVert \kappa_a\rVert\leqslant\lVert \kappa\rVert$ and $\lVert \kappa^{\prime\prime}_a\rVert <\lVert \kappa_a\rVert\leqslant\lVert \kappa\rVert$. From the induction hypothesis, $\Sigma ;\emptyset \vdash_\omega \bigwedge (\Sigma_0\cup\Lambda\cup \Pi
\cup \Box^+ \Pi) \to B$ and $\Sigma ;\Sigma \vdash_\omega \bigwedge (\Sigma_0\cup\Lambda\cup \Pi
\cup \Box^+ \Pi) \to \Box^+ B$. Hence, $\Sigma ;\emptyset \vdash_\omega \bigwedge (\Box\Lambda\cup  \Box^+ \Pi) \to \Box^+ B$ and $\Sigma ;\emptyset \vdash_\omega \bigwedge (\Phi\cup\Box \Lambda\cup \Box^+ \Pi)\to \bigvee (\{\Box^+ B\}\cup \Psi\cup \{\Box (C\wedge H_{i+1}\}) $.  Note that $\Gamma_a=\Phi,\Box \Lambda, \Box^+ \Pi$ and $\Delta_a=\Box^+ B, \Psi$. Consequently, $\Sigma ;\emptyset \vdash_\omega G_a\to \Box (C\wedge H_{i+1})  $.

Suppose $\kappa_a$ has the form 
\[\AXC{$\kappa^\prime_a$}
\noLine
\UIC{$\vdots$}
\noLine
\UIC{$\Sigma; \Sigma_0, \Lambda, \Pi, \Box^+ \Pi \Rightarrow_\circ  C$}
\AXC{$\kappa^{\prime\prime}_a$}
\noLine
\UIC{$\vdots$}
\noLine
\UIC{$\Sigma; \Sigma_0,\Lambda, \Pi, \Box^+ \Pi \Rightarrow_C \Box^+ C$}
\LeftLabel{$\Box^+$}
\RightLabel{ .}
\BIC{$\Sigma;\Phi, \Box \Lambda, \Box^+ \Pi \Rightarrow_C   \Delta_a, \Box^+ C$}
\DisplayProof 
\]
Since $\lVert \kappa^\prime_a\rVert <\lVert \kappa_a\rVert\leqslant\lVert \kappa\rVert$, $\Sigma ;\emptyset \vdash_\omega \bigwedge (\Sigma_0\cup\Lambda\cup \Pi \cup \Box^+ \Pi) \to C$. Let us denote the right premise of $a$ by $b$. From (\ref{form1}), we have $\Sigma ;\emptyset \vdash_\omega \bigwedge \Gamma_b\to \bigvee (\Delta_b\cup \{G_b\})$, i.e., $\Sigma ;\emptyset \vdash_\omega \bigwedge (\Sigma_0\cup\Lambda\cup \Pi \cup \Box^+ \Pi) \to G_b$. Since $\mathsf{K}^+\vdash G_b\to H_{i+1}$, we obtain $\Sigma ;\emptyset \vdash_\omega \bigwedge (\Sigma_0\cup\Lambda\cup \Pi \cup \Box^+ \Pi) \to H_{i+1}$. Therefore, $\Sigma ;\emptyset \vdash_\omega \bigwedge (\Sigma_0\cup\Lambda\cup \Pi \cup \Box^+ \Pi) \to C\wedge H_{i+1}$, $\Sigma ;\Sigma \vdash_\omega \bigwedge (\Lambda\cup \Pi \cup \Box^+ \Pi) \to C\wedge H_{i+1}$ and $\Sigma ;\emptyset \vdash_\omega \bigwedge (\Box\Lambda\cup  \Box^+ \Pi) \to \Box (C\wedge H_{i+1})$. We see that $\Sigma ;\emptyset \vdash_\omega \bigwedge (\Phi\cup\Box \Lambda\cup \Box^+ \Pi)\to \bigvee (\Delta_a\cup \{\Box (C\wedge H_{i+1}\}) $ and $\Sigma ;\emptyset \vdash_\omega G_a\to \Box (C\wedge H_{i+1})  $.

Suppose $\kappa_a$ has the form  
\[
\AXC{$\kappa^{\prime}_a$}
\noLine
\UIC{$\vdots$}
\noLine
\UIC{$\Sigma;\Gamma_a, A  \Rightarrow_C B ,\Lambda,\Box^+ C$}
\LeftLabel{$\mathsf{\rightarrow_R}$}
\RightLabel{ .}
\UIC{$\Sigma;\Gamma_a  \Rightarrow_C A \rightarrow B ,\Lambda,\Box^+ C$}
\DisplayProof 
\]
We denote the premise of $a$ by $b$. 
Since $\mathit{rk}(b)=\lvert \kappa^\prime_a\rvert <\lvert \kappa_a\rvert= \mathit{rk}(b)$, $\Sigma ;\emptyset \vdash_\omega G_b\to \Box (C\wedge H_{i+1})  $. Note that $\mathsf{K}^+\vdash G_a\to G_b$. Hence, $\Sigma ;\emptyset \vdash_\omega G_a\to \Box (C\wedge H_{i+1})  $.

The remaining cases of inference rules ($\mathsf{\rightarrow_L}$) and ($\mathsf{cut}$) are analogous to the case of ($\mathsf{\rightarrow_R}$). Therefore, we omit them.

The claim is established. We have $\Sigma ;\emptyset \vdash_\omega  H_i\to \Box (C\wedge H_{i+1}) $ for any $i\in \mathbb{N}$. Applying the inference rule ($\omega$), we obtain $\Sigma ;\emptyset \vdash_\omega H_0\to \Box^+ C$. Note that $\mathsf{K}^+\vdash G_r\to H_0$, where $r$ is the root of $\kappa$. Thus, $\Sigma ;\emptyset \vdash_\omega G_r\to \Box^+ C$. From (\ref{form1}), we have $\Sigma ;\emptyset \vdash_\omega \bigwedge \Gamma_r\to \bigvee (\Delta_r\cup \{G_r\})$. Consequently, $\Sigma ;\emptyset \vdash_\omega \bigwedge \Gamma_r\to \bigvee (\Delta_r\cup \{\Box^+ C\})$, i.e., $\Sigma ;\emptyset \vdash_\omega \bigwedge \Gamma\to \bigvee \Delta $.
\end{proof}


Now the equivalence of the calculus $\mathsf{S}+ \mathsf{cut}$ and the logic  $\mathsf{K}^+$ with $\omega$-derivations can be easily established.
\begin{propos}\label{seqeqinf}
We have $\Sigma ;\Gamma \vdash_\omega A$, where $\Gamma$ is finite, if and only if $\mathsf{S}+ \mathsf{cut} \vdash\Sigma ; \Gamma \Rightarrow A$.
\end{propos}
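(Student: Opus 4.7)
The proposition is a two-way equivalence, and each direction essentially reduces to a lemma already proved above, so the plan is really a matter of packaging.

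For the forward direction ($\Sigma;\Gamma \vdash_\omega A \Rightarrow \mathsf{S}+\mathsf{cut} \vdash \Sigma;\Gamma \Rightarrow A$) there is nothing to do: this is exactly Lemma \ref{omegatoinfcut}, whose statement assumes that $\Gamma$ is finite, matching the hypothesis of the proposition. So I would simply cite Lemma \ref{omegatoinfcut} and move on.

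For the backward direction, suppose $\pi$ is an $\infty$-proof in $\mathsf{S}+\mathsf{cut}$ of the sequent $\Sigma;\Gamma \Rightarrow A$. The succedent here is a single formula, so the auxiliary sign $\circ$ is always a legal annotation; hence $\circ \in \mathit{Ann}(\pi)$ and $\pi^\circ$ is a well-defined annotated $\infty$-proof of $\Sigma;\Gamma \Rightarrow_\circ A$. I then invoke Lemma \ref{inftoomega}, which yields
\[
\Sigma; \emptyset \vdash_\omega \bigwedge \Gamma \to A,
\]
reading $\bigvee \Delta$ as $A$ since $\Delta = A$. It remains to pass from this to $\Sigma;\Gamma \vdash_\omega A$, which is a finitary propositional step: writing $\Gamma = B_1, \dotsc, B_n$, the classical tautology $B_1 \to (B_2 \to \dotsb \to (B_n \to \bigwedge \Gamma)\dotsb)$ is an axiom of $\mathsf{K}^+$, so by $n$ applications of $(\mathsf{mp})$ using the $B_i$ as non-boxed assumption leaves we obtain an $\omega$-derivation (in fact a finite Frege--Hilbert derivation) of $\bigwedge \Gamma$ from $\Gamma$. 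One more application of $(\mathsf{mp})$ with $\bigwedge \Gamma \to A$ gives $\Sigma;\Gamma \vdash_\omega A$. The degenerate case $\Gamma = \emptyset$ works the same way, with $\bigwedge \Gamma = \top$ derived from the tautology $\top$.

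I do not expect a genuine obstacle here, since the two lemmas \ref{omegatoinfcut} and \ref{inftoomega} do all the non-trivial work (the interesting construction being the extraction of the sequence $H_i \to \Box(C \wedge H_{i+1})$ in Lemma \ref{inftoomega} that feeds the $(\omega)$-rule). The only things to verify carefully are that $\circ$ is indeed available as an annotation for the root sequent (true because the succedent is a single, not necessarily $\Box^+$-formula, so no side condition kicks in) and that the finitary propositional wrapping at the end respects the boxed/non-boxed labelling of assumption leaves, which it does because no $(\mathsf{nec})$ and no $(\omega)$ is introduced by this wrapping.
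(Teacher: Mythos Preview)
Your proposal is correct and follows essentially the same route as the paper: cite Lemma~\ref{omegatoinfcut} for the forward direction, and for the converse apply Lemma~\ref{inftoomega} to $\pi^\circ$ to get $\Sigma;\emptyset \vdash_\omega \bigwedge \Gamma \to A$, then close with the obvious propositional steps to reach $\Sigma;\Gamma \vdash_\omega A$. One minor remark: your justification that $\circ$ is a legal annotation is slightly overcomplicated --- $\circ$ is always available by definition (the side condition on annotations only constrains formula annotations $s \in \mathit{Fm}$), so there is nothing special about the succedent being a single formula.
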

\begin{proof}
If $\Sigma ;\Gamma \vdash_\omega A $ and the set $\Gamma$ is finite, then $\mathsf{S}+ \mathsf{cut}\vdash\Sigma ; \Gamma \Rightarrow A$ from Lemma \ref{omegatoinfcut}. Now assume $\pi$ is an $\infty$-proof of the sequent $\Sigma ; \Gamma \Rightarrow A$. Applying Lemma \ref{inftoomega} to the annotated $\infty$-proof $\pi^\circ$, we obtain $\Sigma ;\emptyset \vdash_\omega \bigwedge \Gamma\to A $.
Consequently, $\Sigma ;\Gamma  \vdash_\omega \bigwedge \Gamma\to A $, $\Sigma ;\Gamma  \vdash_\omega \bigwedge \Gamma$ and $\Sigma ;\Gamma \vdash_\omega  A$.

\end{proof}

\section{Fixed-points and continuous families}
\label{s4.1}


In this section, we prove several fixed-point results, which allow us to recursively define operations acting on the set of $\infty$-proofs $\mathsf{P}$ and show properties of such operations.  




For $\vec{\pi}\in \mathsf{P}^m$, we set $\mathit{Ann}(\vec{\pi})\coloneq \mathit{Ann}(\pi_1)\cap \dotsb \cap \mathit{Ann}(\pi_m)$. By $\mathscr{O}_m$, we denote the set of all functions from $\mathsf{P}^m$ to $\mathsf{P}$ that preserve annotations, i.e. $\mathscr{O}_m\coloneq \{\mathsf{a}\colon \mathsf{P}^m \to \mathsf{P}\mid \mathit{Ann}(\vec{\pi})\subset \mathsf{a}(\pi) \text{ for any $\vec{\pi}\in \mathsf{P}^m$}\}$. In what follows, elements of the set $\mathscr{O}_m$ will simply be called \emph{$m$-ary operations on $\mathsf{P}$}. 

We also put 
\[\lVert \vec{\pi}\rVert_s:=\bigoplus\limits^m_{i=1} \lVert \pi_i\rVert_s, \qquad  \lVert \vec{\pi}\rVert:=\min \{\lVert \vec{\pi}\rVert_s \mid s\in \mathit{Ann}(\vec{\pi}) \}, \qquad \lvert \vec{\pi}\rvert:=\sum^m_{i=1}\lvert\pi_i\rvert,  \]
where $\oplus$ is the Hessenberg sum of ordinals \cite{Hes06}. 

Recall that the first uncountable ordinal is standardly denoted by $\omega_1$.
For $\mathsf{a,b}\in \mathscr{O}_m$, $n,k\in \mathbb{N}$ and $\alpha < \omega_1$, we write $\mathsf a\sim_{\alpha, n,k}\mathsf b$ if, for any $\vec{\pi}\in \mathsf{P}^m$, the following conditions hold: 
\begin{itemize}
\item $\mathsf{a}(\vec{\pi}) = \mathsf{b}(\vec{\pi})$ whenever $\lVert \vec{\pi}\rVert < \alpha$;
\item $\mathsf a(\vec{\pi})\sim^s_n\mathsf b(\vec{\pi})$ whenever $s\in \mathit{Ann}(\vec{\pi})$ and $\lVert \vec{\pi}\rVert_s  = \alpha$;
\item $\mathsf a(\vec{\pi})\sim^s_{n+1}\mathsf b(\vec{\pi})$ whenever $s\in \mathit{Ann}(\vec{\pi})$, $\lVert \vec{\pi}\rVert_s  = \alpha$ and $\lvert \vec{\pi}\rvert< k$.
\end{itemize}
We stress that
\begin{gather*}
\mathsf a\sim_{\alpha,n+1,0}\mathsf b  \Longleftrightarrow \forall k\in \mathbb{N}\; (\mathsf a\sim_{\alpha,n,k}\mathsf b), \quad
\mathsf a\sim_{\alpha+1,0,0}\mathsf b  \Longleftrightarrow \forall n\in \mathbb{N}\; (\mathsf a\sim_{\alpha,n,0}\mathsf b).
\end{gather*} 


We call a mapping $\mathscr M$ from $\mathscr{O}_m$ to $\mathscr{O}_m$ \emph{contractive} if, for any $\mathsf a,\mathsf b\in\mathscr{O}_m$, any $n,k\in \mathbb{N}$ and any $\alpha < \omega_1$, we have
\[\mathsf a\sim_{\alpha,n,k}\mathsf b\Longrightarrow \mathscr{M}(\mathsf a)\sim_{\alpha,n,k+1}\mathscr{M}(\mathsf b).\]

Now we state a fixed-point theorem for contractive mappings from $\mathscr{O}_m$ to $\mathscr{O}_m$. Notice that the theorem can be derived from several known results. If we consider the set $\mathscr{O}_m$ as a non-empty spherically complete generalized ultrametric space, then the theorem is a consequence of Prie\ss-Crampe's fixed-point result \cite{PriessCrampe1990}. If we note that $\sim_{\alpha,n,k}$ is a complete well-ordered family of equivalence relations on $\mathscr{O}_m$, then the theorem becomes a consequence of Theorem 1 from \cite{GiMi2003}. However, we give its full proof for the sake of completeness.
\begin{theorem}\label{explicit fixed-point}
Every contractive mapping $\mathscr{M}$ from $\mathscr{O}_m$ to $\mathscr{O}_m$ has a unique fixed-point. 
\end{theorem}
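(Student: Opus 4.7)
\emph{Uniqueness.} Let $\mathsf{a}, \mathsf{b}$ be fixed points of $\mathscr{M}$. I would prove by transfinite induction on the lex-ordered triples $(\alpha, n, k)$ with $\alpha<\omega_1$ that $\mathsf{a} \sim_{\alpha, n, k} \mathsf{b}$. The base case $(0,0,0)$ is trivial, since its clauses are either vacuous (they refer to $\lVert\vec{\pi}\rVert < 0$ or $\lvert\vec{\pi}\rvert < 0$) or refer to the total relation $\sim^s_0$. The successor step in $k$ is immediate from contractivity: $\mathsf{a} = \mathscr{M}(\mathsf{a}) \sim_{\alpha, n, k+1} \mathscr{M}(\mathsf{b}) = \mathsf{b}$. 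The passages to $(\alpha, n+1, 0)$ and $(\alpha+1, 0, 0)$ follow from the definitional equivalences stated just after the definition of $\sim_{\alpha,n,k}$. For a limit $\lambda$, the only nontrivial clause of $\sim_{\lambda, 0, 0}$ requires $\mathsf{a}(\vec{\pi}) = \mathsf{b}(\vec{\pi})$ whenever $\lVert\vec{\pi}\rVert < \lambda$, and every such $\vec{\pi}$ satisfies $\lVert\vec{\pi}\rVert < \beta$ for some $\beta < \lambda$. Finally, for any $\vec{\pi}$ the ordinal $\lVert\vec{\pi}\rVert$ is countable, so a suitable $\alpha < \omega_1$ gives $\mathsf{a}(\vec{\pi}) = \mathsf{b}(\vec{\pi})$, hence $\mathsf{a} = \mathsf{b}$.

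\emph{Existence.} Pick any $\mathsf{a}_0 \in \mathscr{O}_m$ (for instance, a projection) and construct a transfinite sequence of iterates $\mathsf{a}_{(\alpha, n, k)}$ indexed by the lex triples. At successor stages set $\mathsf{a}_{(\alpha, n, k+1)} := \mathscr{M}(\mathsf{a}_{(\alpha, n, k)})$. At each of the three kinds of limit stage, Proposition~\ref{Completeness1} produces the required pointwise limit: for each $\vec{\pi}$ with chosen $s \in \mathit{Ann}(\vec{\pi})$ for which $\lVert\vec{\pi}\rVert_s$ matches the relevant $\alpha$, the earlier iterates evaluated at $\vec{\pi}$ form a coherent chain along $(\sim^s_N)_N$ and converge to a unique $\infty$-proof; on other inputs one may inherit the value from any sufficiently late previous stage. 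The invariant maintained is that $\mathsf{a}_{(\beta, m, j)} \sim_{\beta, m, j} \mathsf{a}_{(\alpha, n, k)}$ whenever $(\beta, m, j) \leq (\alpha, n, k)$, which is preserved both by successors (via contractivity) and by the choice of limit. The union of the construction yields an operation $\mathsf{a}^* \in \mathscr{O}_m$; for any $\vec{\pi}$, past the stage $(\lVert\vec{\pi}\rVert + 1, 0, 0)$ the value at $\vec{\pi}$ stabilizes, so $\mathscr{M}(\mathsf{a}^*)(\vec{\pi}) = \mathsf{a}^*(\vec{\pi})$ and $\mathsf{a}^*$ is a fixed point.

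\emph{Main obstacle.} The delicate step is making the limit construction rigorous: verifying that each limit operation actually lies in $\mathscr{O}_m$ (which uses the final clause of Proposition~\ref{Completeness1}, namely $\mathit{Ann}(\lim \pi_i) = \mathit{Ann}(\pi_1)$) and that the coherence invariant propagates across all three kinds of limit stage. An alternative route is to reformulate $\mathscr{O}_m$ with the family $(\sim_{\alpha,n,k})$ as a spherically complete generalized ultrametric space and invoke Prie\ss-Crampe's theorem, or as a space with a well-ordered family of equivalence relations and invoke Gianantonio--Miculan; however, establishing the completeness hypothesis there is essentially the same limit argument.
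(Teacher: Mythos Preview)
Your proposal is correct and follows essentially the same approach as the paper: a transfinite iteration indexed by the lex-ordered triples (equivalently, by ordinals $\gamma=\omega^2\cdot\alpha+\omega\cdot n+k$), maintaining the coherence invariant $\mathsf{a}_\beta\sim_{\beta}\mathsf{a}_\gamma$ for $\beta<\gamma$, with Proposition~\ref{Completeness1} supplying the genuine limit at stage $(\alpha+1,0,0)$ and the other two limit stages handled by inheriting values from a sufficiently late earlier stage. The paper's proof is more explicit about the case split at limit stages (only one of the three actually invokes Proposition~\ref{Completeness1}), but your sketch and its identification of the main obstacle match the paper's argument.
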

\begin{proof}
Since the set $\mathscr{O}_m$ is non-empty, there is an operation $\mathsf{a}\in \mathscr{O}_m$.
For any ordinal $\lambda < \omega_1$, we define an operation $\mathsf{a}_\lambda \colon \mathsf{P}^m\to\mathsf{P}$ by transfinite recursion.
We put $\mathsf{a}_0= \mathsf{a}$ and $\mathsf{a}_{\lambda+1}= \mathscr{M} (\mathsf{a}_\lambda)$. For a limit ordinal $\lambda$ of the form $\omega^2 \cdot \alpha +\omega \cdot (n+1)$, we set 
\[\mathsf{a}_{\omega^2 \cdot \alpha +\omega \cdot (n+1)}(\vec{\pi}) = \mathsf{a}_{\omega^2 \cdot \alpha +\omega \cdot n+\lvert\vec{\pi}\rvert +1}(\vec{\pi}).\]
In addition, for $\lambda=\omega^2 \cdot (\alpha +1)$, we put
\[\mathsf{a}_{\omega^2 \cdot (\alpha+1) }(\vec{\pi}) =\lim\limits_{n \to +\infty} \mathsf{a}_{\omega^2\cdot \alpha + \omega \cdot n }(\vec{\pi})\]
if $\lVert \vec{\pi}\rVert  \leqslant \alpha$ and $\lim\limits_{n \to +\infty} \mathsf{a}_{\omega^2\cdot \alpha + \omega \cdot n }(\vec{\pi})$ is defined. Recall that, by Proposition \ref{Completeness1}, $\lim\limits_{n \to +\infty} \mathsf{a}_{\omega^2\cdot \alpha + \omega \cdot n }(\vec{\pi})$ is defined whenever there exists $s\in \mathit{Fm}_\circ$ such that
\[\mathsf{a}_{\omega^2\cdot \alpha  }(\vec{\pi})\sim^s_0\mathsf{a}_{\omega^2\cdot \alpha + \omega }(\vec{\pi})\sim^s_1
\mathsf{a}_{\omega^2\cdot \alpha + \omega \cdot 2 }(\vec{\pi}) \sim^s_2\dotso .\]  
In this case, $\mathit{Ann}(\vec{\pi})\subset \mathit{Ann}(\mathsf{a}_{\omega^2\cdot \alpha + \omega  }(\vec{\pi}) )= \mathit{Ann}(\lim\limits_{n \to +\infty} \mathsf{a}_{\omega^2\cdot \alpha + \omega \cdot n }(\vec{\pi}))$. Otherwise, we put 
\[\mathsf{a}_{\omega^2 \cdot (\alpha+1) }(\vec{\pi}) =\mathsf{a}_0 (\vec{\pi}).\]
Finally, for $\lambda =\omega^2 \cdot \alpha$, where $\alpha$ is a limit ordinal, we set
\begin{gather*}
\mathsf{a}_{\omega^2 \cdot \alpha }(\vec{\pi}) = \begin{cases} 
\mathsf{a}_{\omega^2 \cdot ( \lVert \vec{\pi}\rVert +1)}(\vec{\pi}), & \text{if $ \lVert \vec{\pi}\rVert  < \alpha$;}\\
\mathsf{a}_0 (\vec{\pi}), & \text{otherwise.}
\end{cases}
\end{gather*} 
Now the operation $\mathsf{a}_\lambda$ is well-defined for any ordinal $\lambda < \omega_1$.

In this proof, for $\mathsf{c}, \mathsf{d}\in \mathscr{O}_m$, we will write $\mathsf{ c}\simeq_\gamma \mathsf{d}$ if $\mathsf{ c}\sim_{\alpha, n,k} \mathsf{d}$ and $\gamma= \omega^2 \cdot \alpha+\omega \cdot n +k$. Notice that 
\begin{gather*}
\mathsf{c}\simeq_\gamma \mathsf{d} \Longrightarrow \forall \delta< \gamma \; (\mathsf{c}\simeq_\delta \mathsf{d}),  \quad \mathsf{c} =\mathsf{d} \Longleftrightarrow \forall \delta < \omega_1 \; (\mathsf{c} \simeq_{\delta}\mathsf{d}).
\end{gather*} 

We shall check that, for any ordinal $\gamma< \omega_1$ and any ordinal $\beta < \gamma$, we have $\mathsf{a}_\beta \simeq_\beta \mathsf{a}_\gamma$.
We prove this claim by transfinite induction on $\gamma$. The case $\gamma=0$ is trivial.

Case 1: $\gamma$ is a successor ordinal, i.e. $\gamma = \gamma_0 +1$ for some ordinal $\gamma_0< \omega_1$. 
If $\gamma_0 =0$, then we have $\mathsf{a}_0 \simeq_0 \mathsf{a}_1$ and the assertion holds. 

If $\gamma_0 = \gamma^\prime_0+1$ for some ordinal $\gamma^\prime_0< \omega_1$, then $\mathsf{a}_{\gamma^\prime_0} \simeq_{\gamma^\prime_0}\mathsf{a}_{\gamma_0}$ by the induction hypothesis. Since $\mathscr{M}$ is contractive, we have $\mathsf{a}_{\gamma_0} = \mathscr{M}(\mathsf{a}_{\gamma^\prime_0} ) \simeq_{\gamma^\prime_0+1} \mathscr{M}(\mathsf{a}_{\gamma_0} ) = \mathsf{a}_\gamma$, i.e. $\mathsf{a}_{\gamma_0} \simeq_{\gamma_0}\mathsf{a}_\gamma$. By the induction hypothesis, we have $\mathsf{a}_{\beta_0} \simeq_{\beta_0} \mathsf{a}_{\gamma_0}$ for any ordinal $\beta_0 < \gamma_0$. Consequently, for any $\beta_0<\gamma_0$, we obtain $\mathsf{a}_{\beta_0} \simeq_{\beta_0} \mathsf{a}_{\gamma_0} \simeq_{\gamma_0} \mathsf{a}_\gamma $ and $\mathsf{a}_{\beta_0} \simeq_{\beta_0} \mathsf{a}_\gamma $. 
Hence, $\mathsf{a}_\beta \simeq_\beta \mathsf{a}_\gamma$ for any ordinal $\beta
 < \gamma$.

Suppose $\gamma_0$ is a limit ordinal. For any $\beta_0<\gamma_0$, we have 
\[\mathsf{a}_{\gamma_0} \simeq_{\beta_0} \mathsf{a}_{\beta_0} \simeq_{\beta_0} \mathsf{a}_{\beta_0+1} = \mathscr{M}(\mathsf{a}_{\beta_0}) \simeq_{\beta_0 +1} \mathscr{M}(\mathsf{a}_{\gamma_0}) = \mathsf{a}_{\gamma},\]
where $\mathsf{a}_{\beta_0} \simeq_{\beta_0} \mathsf{a}_{\gamma_0}$ and $\mathsf{a}_{\beta_0} \simeq_{\beta_0} \mathsf{a}_{\beta_0+1}$ follow from the induction hypothesis. We obtain $\mathsf{a}_{\gamma_0} \simeq_{\beta_0} \mathsf{a}_{\gamma}$ for any $\beta_0<\gamma_0$. Hence, $\mathsf{a}_{\gamma_0} \simeq_{\gamma_0} \mathsf{a}_{\gamma}$. In addition, for any $\beta_0<\gamma_0$, we have $\mathsf{a}_{\beta_0} \simeq_{\beta_0} \mathsf{a}_{\gamma_0} \simeq_{\gamma_0} \mathsf{a}_\gamma $ and $\mathsf{a}_{\beta_0} \simeq_{\beta_0} \mathsf{a}_\gamma $. Therefore $\mathsf{a}_\beta \simeq_\beta \mathsf{a}_\gamma$ for any ordinal $\beta
 < \gamma$.


Case 2: $\gamma = \omega^2\cdot \alpha_0 +\omega \cdot (n_0+1)$ for some $\alpha_0< \omega_1$ and $n_0 \in \mathbb{N}$. Consider any $\beta< \gamma$. There is a natural number $k_0$ such that $\beta < \omega^2\cdot \alpha_0 +\omega \cdot n_0+k_0$. From the induction hypothesis, we have $\mathsf{a}_\beta \simeq_\beta \mathsf{a}_{\omega^2\cdot \alpha_0 +\omega \cdot n_0+k_0}$. 

Now we claim that $\mathsf{a}_{\omega^2\cdot \alpha_0 +\omega \cdot n_0+k_0} \simeq_{\omega^2\cdot \alpha_0 +\omega \cdot n_0+k_0} \mathsf{a}_\gamma$. 
Consider any tuple $\vec{\pi} \in \mathcal P^m$. From the definition of $\mathsf{a}_\gamma$, we have $\mathsf{a}_{\gamma}(\vec{\pi}) = \mathsf{a}_{\omega^2\cdot \alpha_0 +\omega \cdot n_0+\lvert\vec{\pi}\rvert +1}(\vec{\pi})$.
By the induction hypothesis, we also have 
\[\mathsf{a}_{\omega^2\cdot \alpha_0 +\omega \cdot n_0+\lvert\vec{\pi}\rvert +1}\sim_{\alpha_0, n_0, \min \{\lvert\vec{\pi}\rvert +1, k_0\} } \mathsf{a}_{\omega^2\cdot \alpha_0 +\omega \cdot n_0+k_0}.\]
Consequently, if $\lVert \vec{\pi}\rVert  < \alpha_0$, then 
\[\mathsf{a}_{\gamma}(\vec{\pi}) = \mathsf{a}_{\omega^2\cdot \alpha_0 +\omega \cdot n_0+\lvert\vec{\pi}\rvert +1}(\vec{\pi})= \mathsf{a}_{\omega^2\cdot \alpha_0 +\omega \cdot n_0+k_0} (\vec{\pi}).\] 
If $ \lVert \vec{\pi}\rVert_s =  \alpha_0$ for some $s\in \mathit{Ann}(\vec{\pi})$, then 
\[\mathsf{a}_{\gamma}(\vec{\pi}) = \mathsf{a}_{\omega^2\cdot \alpha_0 +\omega \cdot n_0+\lvert\vec{\pi}\rvert +1}(\vec{\pi})\sim^s_{n_0} \mathsf{a}_{\omega^2\cdot \alpha_0 +\omega \cdot n_0+k_0} (\vec{\pi}).\]
If $s\in \mathit{Ann}(\vec{\pi})$, $\lVert \vec{\pi}\rVert_s = \alpha_0$ and $\lvert\vec{\pi}\rvert< k_0$, then $\lvert\vec{\pi}\rvert < \min \{\lvert\vec{\pi}\rvert +1, k_0\}$ and 
\[\mathsf{a}_{\gamma}(\vec{\pi}) = \mathsf{a}_{\omega^2\cdot \alpha_0 +\omega \cdot n_0+\lvert\vec{\pi}\rvert +1}(\vec{\pi})\sim^s_{n_0+1} \mathsf{a}_{\omega^2\cdot \alpha_0 +\omega \cdot n_0+k_0} (\vec{\pi}).\]
The claim is checked. We see that $\mathsf{a}_{\omega^2\cdot \alpha_0 +\omega \cdot n_0+k_0} \simeq_{\omega^2\cdot \alpha_0 +\omega \cdot n_0+k_0} \mathsf{a}_\gamma$. 

We obtain $\mathsf{a}_\beta \simeq_\beta  \mathsf{a}_\gamma$ since
\[\mathsf{a}_\beta \simeq_\beta \mathsf{a}_{\omega^2\cdot \alpha_0 +\omega \cdot n_0+k_0}  \simeq_{\omega^2\cdot \alpha_0 +\omega \cdot n_0+k_0} \mathsf{a}_\gamma .\]

Case 3: $\gamma = \omega^2\cdot (\alpha_0 + 1)$ for some $\alpha_0< \omega_1$. Consider any $\beta< \gamma$. There is a natural number $n_0$ such that $\beta < \omega^2\cdot \alpha_0 +\omega \cdot n_0$. From the induction hypothesis, we have $\mathsf{a}_\beta \simeq_\beta \mathsf{a}_{\omega^2\cdot \alpha_0 +\omega \cdot n_0}$ and
\begin{gather}\label{series}
\mathsf{a}_{\omega^2\cdot \alpha_0 +\omega \cdot 0} \sim_{\alpha_0,0,0} \mathsf{a}_{\omega^2\cdot \alpha_0 +\omega \cdot 1 }\sim_{\alpha_0,1,0} \mathsf{a}_{\omega^2\cdot \alpha_0 +\omega \cdot 2} \sim_{\alpha_0,2,0} \dotsb .
\end{gather}
 
We claim that $\mathsf{a}_{\omega^2\cdot \alpha_0 +\omega \cdot n_0} \simeq_{\omega^2\cdot \alpha_0 +\omega \cdot n_0} \mathsf{a}_\gamma$. 
Consider any tuple $\vec{\pi} \in \mathsf{P}^m$. 
If $ \lVert \vec{\pi}\rVert  < \alpha_0$, then, from (\ref{series}), we have
\[\mathsf{a}_{\omega^2\cdot \alpha_0 +\omega \cdot 0} (\vec{\pi}) = \mathsf{a}_{\omega^2\cdot \alpha_0 +\omega \cdot 1 }(\vec{\pi}) = \mathsf{a}_{\omega^2\cdot \alpha_0 +\omega \cdot 2} (\vec{\pi}) = \dotsb \]
and $ \lim\limits_{n \to +\infty} \mathsf{a}_{\omega^2\cdot \alpha_0 + \omega \cdot n }(\vec{\pi})$ is defined and equals to $\mathsf{a}_{\omega^2\cdot \alpha_0 + \omega \cdot n_0 }(\vec{\pi})$. 
Consequently, 
\[\mathsf{a}_{\gamma}(\vec{\pi}) = \lim\limits_{n \to +\infty} \mathsf{a}_{\omega^2\cdot \alpha_0 + \omega \cdot n }(\vec{\pi})= \mathsf{a}_{\omega^2\cdot \alpha_0 + \omega \cdot n_0 }(\vec{\pi}).\] 
If $s\in \mathit{Ann}(\vec{\pi})$ and $\lVert\vec{\pi}\rVert_s  = \alpha_0$, then, from (\ref{series}), we see 
\[\mathsf{a}_{\omega^2\cdot \alpha_0 +\omega \cdot 0} (\vec{\pi}) \sim^s_0 \mathsf{a}_{\omega^2\cdot \alpha_0 +\omega \cdot 1 }(\vec{\pi}) \sim^s_1 \mathsf{a}_{\omega^2\cdot \alpha_0 +\omega \cdot 2} (\vec{\pi}) \sim^s_2 \dotso \]
and $ \lim\limits_{n \to +\infty} \mathsf{a}_{\omega^2\cdot \alpha_0 + \omega \cdot n }(\vec{\pi})$ is defined by Proposition \ref{Completeness1}. We obtain that
\[\mathsf{a}_{\gamma}(\vec{\pi}) = \lim\limits_{n \to +\infty} \mathsf{a}_{\omega^2\cdot \alpha_0 + \omega \cdot n }(\vec{\pi})\sim^s_{n_0} \mathsf{a}_{\omega^2\cdot \alpha_0 + \omega \cdot n_0 }(\vec{\pi}).\]
The claim is checked. We see that $\mathsf{a}_{\omega^2\cdot \alpha_0 +\omega \cdot n_0} \simeq_{\omega^2\cdot \alpha_0 +\omega \cdot n_0} \mathsf{a}_\gamma$. 

We obtain $\mathsf{a}_\beta \simeq_\beta  \mathsf{a}_\gamma$ since
\[\mathsf{a}_\beta \simeq_\beta \mathsf{a}_{\omega^2\cdot \alpha_0 +\omega \cdot n_0}  \simeq_{\omega^2\cdot \alpha_0 +\omega \cdot n_0} \mathsf{a}_\gamma .\]


Case 4: $\gamma = \omega^2\cdot \alpha_0 $ for some limit ordinal $\alpha_0< \omega_1$. Consider any $\beta< \gamma$. There is an ordinal $\alpha^\prime_0 < \alpha_0$ such that $\beta < \omega^2\cdot \alpha^\prime_0$. From the induction hypothesis, we have $\mathsf{a}_\beta \simeq_\beta \mathsf{a}_{\omega^2\cdot \alpha^\prime_0}$.

We claim that $\mathsf{a}_{\omega^2\cdot \alpha^\prime_0} \simeq_{\omega^2\cdot \alpha^\prime_0} \mathsf{a}_\gamma$. 
Consider any tuple $\vec{\pi} \in \mathsf{P}^m$. 
If $ \lVert \vec{\pi}\rVert  < \alpha^\prime_0$, then $\lVert \vec{\pi}\rVert +1  \leqslant \alpha^\prime_0$ and 
\[\mathsf{a}_{\omega^2 \cdot (\lVert \vec{\pi}\rVert +1)} \simeq_{\omega^2 \cdot (\lVert \vec{\pi}\rVert +1)} \mathsf{a}_{\omega^2\cdot \alpha^\prime_0} \]
from the induction hypothesis.
We have $\lVert \vec{\pi}\rVert < \alpha_0$ and $\lVert \vec{\pi}\rVert  < \lVert \vec{\pi}\rVert +1$. Therefore, 
\[\mathsf{a}_{\gamma}(\vec{\pi}) = \mathsf{a}_{\omega^2 \cdot (\lVert \vec{\pi}\rVert +1)}(\vec{\pi})= \mathsf{a}_{\omega^2\cdot \alpha^\prime_0} (\vec{\pi}).\]
The claim is checked. We see that $\mathsf{a}_{\omega^2\cdot \alpha^\prime_0} \simeq_{\omega^2\cdot \alpha^\prime_0} \mathsf{a}_\gamma$. 

We obtain $\mathsf{a}_\beta \simeq_\beta  \mathsf{a}_\gamma$ since
\[\mathsf{a}_\beta \simeq_\beta \mathsf{a}_{\omega^2\cdot \alpha^\prime_0}  \simeq_{\omega^2\cdot \alpha^\prime_0} \mathsf{a}_\gamma .\]

We have proved that, for any ordinal $\gamma< \omega_1$ and any ordinal $\beta < \gamma$, the assertion $\mathsf{a}_\beta \simeq_\beta \mathsf{a}_\gamma$ holds. 

Now we define the required fixed-point $\mathsf{a}_{\omega_1}$ by putting 
\[\mathsf{a}_{\omega_1} (\vec{\pi}) = \mathsf{a}_{\omega^2 \cdot (\lVert \vec{\pi}\rVert +1)}(\vec{\pi}).\] 
Let us check that $\mathsf a_{\omega_1} \simeq_{\omega^2\cdot \alpha} \mathsf{a}_{\omega^2 \cdot \alpha} $ for any $\alpha< \omega_1$. Consider any tuple $\vec{\pi}\in \mathsf{P}^m$ such that $\lVert \vec{\pi} \rVert <\alpha$. We have $\mathsf{a}_{\omega_1} (\vec{\pi}) = \mathsf{a}_{\omega^2 \cdot (\lVert \vec{\pi}\rVert +1)}(\vec{\pi}) = \mathsf{a}_{\omega^2 \cdot \alpha} (\vec{\pi})$ since $\mathsf a_{\omega^2 \cdot (\lVert \vec{\pi}\rVert +1)} \simeq_{\omega^2 \cdot (\lVert \vec{\pi}\rVert +1)} \mathsf{a}_{\omega^2 \cdot \alpha} $. Therefore $\mathsf a_{\omega_1} \simeq_{\omega^2\cdot \alpha} \mathsf{a}_{\omega^2 \cdot \alpha} $ for any $\alpha< \omega_1$. 

We claim that $ \mathsf{a}_{\omega_1} =\mathscr{M}(\mathsf a_{\omega_1})$. It is sufficient to show that $\mathsf a_{\omega_1} \simeq_{\omega^2 \cdot \alpha} \mathscr{M}(\mathsf a_{\omega_1})$ for any $\alpha < \omega_1$. We see 
\[\mathsf a_{\omega_1} \simeq_{\omega^2\cdot \alpha} \mathsf{a}_{\omega^2 \cdot \alpha} \simeq_{\omega^2\cdot \alpha} \mathsf{a}_{\omega^2\cdot \alpha+1 }= \mathscr{M}(\mathsf a_{\omega^2\cdot \alpha}) \simeq_{\omega^2\cdot \alpha+1} \mathscr{M}(\mathsf a_{\omega_1}).\] Consequently, $\mathsf a_{\omega_1} \simeq_{\omega^2\cdot \alpha} \mathscr{M}(\mathsf a_{\omega_1})$ for any $\alpha< \omega_1$. Hence, $ \mathsf{a}_{\omega_1} =\mathscr{M}(\mathsf a_{\omega_1})$. Obviously, the fixed-point is unique.



\end{proof}

A family of subsets $\mathscr{S}_{\alpha,n,k}$ of the set $\mathscr{O}_m$, where $\alpha<\omega_1$ and $n,k\in \mathbb{N}$, is called \emph{continuous} if it satisfies the following conditions:
\begin{gather*}
\mathscr{S}_{0,0,0} = \mathscr{O}_m, \qquad \mathscr{S}_{\alpha,n+1,0} =\bigcap\limits_{k\in \mathbb{N}} \mathscr{S}_{\alpha,n,k}, \qquad
\mathscr{S}_{\alpha+1,0,0} = \bigcap\limits_{n\in \mathbb{N}} \mathscr{S}_{\alpha,n,0}.
\end{gather*}
Besides, if $\alpha$ is a countable limit ordinal, then
\[\mathscr{S}_{\alpha,0,0} = \bigcap\limits_{\alpha^\prime<\alpha} \mathscr{S}_{\alpha^\prime,0,0}.\] 
\begin{theorem}\label{co-induction principle}
Suppose we have a continuous family of subsets $\mathscr{S}_{\alpha,n,k}$ of the set $\mathscr{O}_m$. In addition, suppose $\mathscr{M}$ is a mapping from $\mathscr{O}_m$ to $\mathscr{O}_m$ such that $\mathscr{M}(\mathscr{S}_{\alpha, n, k}) \subset \mathscr{S}_{\alpha, n, k+1}$. Then any fixed-point of $\mathscr{M}$ belongs to $\bigcap\limits_{\alpha < \omega_1} \mathscr{S}_{\alpha, 0, 0}$.
\end{theorem}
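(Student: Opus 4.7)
The plan is to show, by a single transfinite induction on the ordinal $\omega^2 \cdot \alpha + \omega \cdot n + k$, that every fixed-point $\mathsf{f}$ of $\mathscr{M}$ lies in $\mathscr{S}_{\alpha, n, k}$ for all $\alpha < \omega_1$ and $n,k \in \mathbb{N}$. Specialising to indices of the form $(\alpha, 0, 0)$ then immediately yields $\mathsf{f} \in \bigcap_{\alpha<\omega_1} \mathscr{S}_{\alpha, 0, 0}$, which is the desired conclusion. The base case $\mathsf{f} \in \mathscr{S}_{0,0,0}$ is trivial because $\mathscr{S}_{0,0,0} = \mathscr{O}_m$.

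For the inductive step I would split according to the form of the index. In the successor case $(\alpha, n, k+1)$ the induction hypothesis gives $\mathsf{f} \in \mathscr{S}_{\alpha, n, k}$, whence the monotonicity-type assumption $\mathscr{M}(\mathscr{S}_{\alpha, n, k}) \subset \mathscr{S}_{\alpha, n, k+1}$ combined with the fixed-point equation $\mathsf{f} = \mathscr{M}(\mathsf{f})$ delivers $\mathsf{f} \in \mathscr{S}_{\alpha, n, k+1}$. This is the only place where the hypothesis that $\mathsf{f}$ is a fixed-point is used.

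The remaining cases are pure bookkeeping via the continuity axioms of the family. For an index of the form $(\alpha, n+1, 0)$ the induction hypothesis provides $\mathsf{f} \in \mathscr{S}_{\alpha, n, k}$ for every $k \in \mathbb{N}$, so $\mathsf{f}$ lies in the intersection $\bigcap_{k} \mathscr{S}_{\alpha, n, k} = \mathscr{S}_{\alpha, n+1, 0}$. For $(\alpha+1, 0, 0)$ we analogously get $\mathsf{f} \in \bigcap_{n} \mathscr{S}_{\alpha, n, 0} = \mathscr{S}_{\alpha+1, 0, 0}$. Finally, for a countable limit $\alpha$ the induction hypothesis yields $\mathsf{f} \in \mathscr{S}_{\alpha', 0, 0}$ for every $\alpha' < \alpha$, and the limit clause of continuity gives $\mathsf{f} \in \bigcap_{\alpha'<\alpha} \mathscr{S}_{\alpha', 0, 0} = \mathscr{S}_{\alpha, 0, 0}$.

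I do not expect any serious obstacle: the statement is essentially a coinduction principle, and once one recognises that the continuity conditions convert intersections at limit stages into the inductive step while the hypothesis on $\mathscr{M}$ handles the only nontrivial successor step, the verification is automatic. The only subtle point is making sure all four cases of the transfinite recursion (successor on $k$, collapse of $k$ into $n+1$, collapse of $n$ into $\alpha+1$, and countable-limit $\alpha$) are treated separately and that each matches the corresponding clause of the definition of a continuous family, which mirrors exactly the ordinal structure of $\omega^2\cdot \alpha + \omega \cdot n + k$ used elsewhere in the section.
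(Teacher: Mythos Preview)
Your proposal is correct and matches the paper's own proof essentially line for line: the same transfinite induction on $\omega^2\cdot\alpha+\omega\cdot n+k$, the same base case $\mathscr{S}_{0,0,0}=\mathscr{O}_m$, and the same four-way case split handling the successor step via the fixed-point equation and the three limit steps via the continuity clauses.
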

\begin{proof}
Assume $\mathsf b $ is a fixed-point of $\mathscr{M}$. We shall check that $\mathsf b \in \bigcap\limits_{\alpha < \omega_1} \mathscr{S}_{\alpha, 0, 0}$. It is sufficient to prove that $\mathsf b \in \mathscr{S}_{\alpha,n,k}$ for any $\alpha<\omega_1$ and any $n,k \in \mathbb{N}$. The proof is by transfinite induction on $\gamma  = \omega^2 \cdot \alpha+\omega \cdot n +k$. The case $\gamma=0$ is trivial.

Case 1: $\gamma$ is a successor ordinal, i.e. $\gamma = \omega^2 \cdot \alpha+\omega \cdot n +k+1$ for some $\alpha<\omega_1$ and some $n,k \in \mathbb{N}$. By the induction hypothesis for $\gamma_0=\omega^2 \cdot \alpha+\omega \cdot n +k$, we have $\mathsf{ b} \in \mathscr{S}_{\alpha,n,k}$. Hence,
\[\mathsf{b}=\mathscr{M}(\mathsf{b})\in \mathscr{M}(\mathscr{S}_{\alpha,n,k})\subset \mathscr{S}_{\alpha,n,k+1}.\] 

Case 2: $\gamma = \omega^2\cdot \alpha +\omega \cdot (n+1)$ for some $\alpha< \omega_1$ and $n \in \mathbb{N}$. By the induction hypothesis, we have $\mathsf{ b} \in \mathscr{S}_{\alpha,n,k}$ for each $k \in \mathbb{N}$. Hence,
\[\mathsf{b}\in  \bigcap_{k \in \mathbb{N}} \mathscr{S}_{\alpha,n,k}=\mathscr{S}_{\alpha,n+1,0}.\]

Case 3: $\gamma = \omega^2\cdot (\alpha + 1)$ for some $\alpha< \omega_1$. By the induction hypothesis, we have $\mathsf{ b} \in \mathscr{S}_{\alpha,n,0}$ for all $n \in \mathbb{N}$. Consequently,
\[\mathsf{b}\in  \bigcap_{n \in \mathbb{N}} \mathscr{S}_{\alpha,n,0}=\mathscr{S}_{\alpha+1,0,0}.\]

Case 4: $\gamma = \omega^2\cdot \alpha$ for some limit ordinal $\alpha< \omega_1$. By the induction hypothesis, we have $\mathsf{ b} \in \mathscr{S}_{\alpha^\prime,0,0}$ for $\alpha^\prime <\alpha$. Therefore
\[\mathsf{b}\in  \bigcap_{\alpha^\prime < \alpha} \mathscr{S}_{\alpha^\prime,0,0}=\mathscr{S}_{\alpha+1,0,0}.\]

We conclude that $\mathsf b \in \mathscr{S}_{\alpha,n,k}$ for all $\alpha<\omega_1$ and all $n,k \in \mathbb{N}$.
\end{proof}


Let us consider several continuous families of operations on $\mathsf{P}$, which will play an important role in what follows. 

For $n>0$, we define the set of $\infty$-proofs $\mathsf{CF}_n(s)$ ($\mathsf{SM}_n(s)$) by setting $\pi \in \mathsf{CF}_n(s)$ ($\pi \in \mathsf{SM}_n(s)$) if and only if $s\in\mathit{Ann}(\pi)$ and, in the $n$-fragment of $\pi^s$, there are no applications of the inference rule ($\mathsf{cut}$) ($s\in\mathit{Ann}(\pi)$ and all applications of modal rules whose conclusions belong to the $n$-fragment of $\pi^s$ are slim). Also, $\mathsf{CF}_0 (s)\coloneq \mathsf{P}(s)$ ($\mathsf{SM}_0 (s)\coloneq \mathsf{P}(s)$).

A unary operation $\mathsf{a} $ is \emph{cut-eliminating} (\emph{slimming}) if the $\infty$-proof $\mathsf{a}(\pi)$ is cut-free (slim) for each $\pi \in \mathsf{P}$. For $\alpha<\omega_1$ and $n,k \in \mathbb{N}$, a unary operation $\mathsf{a} $ is called  \emph{$(\alpha, n, k)$-cut-eliminating} (\emph{$(\alpha, n, k)$-slimming}) if, for any $\pi \in \mathsf{P}$, 
\begin{itemize}
\item $\mathsf{a}(\pi) $ is cut-free (slim) whenever $\lVert \pi\rVert  < \alpha$,
\item $\mathsf a(\pi)\in \mathsf{CF}_n(s)$ ($\mathsf a(\pi)\in \mathsf{SM}_n(s)$) whenever $s\in\mathit{Ann}(\pi)$ and $ \lVert \pi\rVert_s  = \alpha$,
\item $\mathsf a(\pi)\in \mathsf{CF}_{n+1}(s)$ ($\mathsf a(\pi)\in \mathsf{SM}_{n+1}(s)$) if $s\in\mathit{Ann}(\pi)$, $ \lVert \pi\rVert_s  = \alpha$ and $\lvert\pi\rvert< k$.
\end{itemize}
We denote the set of $(\alpha, n, k)$-cut-eliminating ($(\alpha, n, k)$-slimming) operations by $\mathscr{CE}_{\alpha,n,k}$ ($\mathscr{SM}_{\alpha,n,k}$). Clearly, both families are continuous. Besides,
\[\mathscr{CE} = \bigcap\limits_{\alpha < \omega_1} \mathscr{CE}_{\alpha, 0, 0} \quad \text{and} \quad \mathscr{SM} = \bigcap\limits_{\alpha < \omega_1} \mathscr{SM}_{\alpha, 0, 0},\]
where $\mathscr{CE}$ and $\mathscr{SM}$ are the sets of cut-eliminating and slimming operations respectively.

An operation $\mathsf{a}\colon \mathsf{P} \to\mathsf{P}$ is called \emph{root-preserving} if it maps an $\infty$-proof to an $\infty$-proof of the same sequent. Let $\mathscr{RP}$ denote the set of all root-preserving operations on $\mathsf{P}$. Note that the set $\mathscr{RP}$ is non-empty since the identity function belongs to $\mathscr{RP}$. \emph{Contractive mappings from $\mathscr{RP}$ to $\mathscr{RP}$} are defined analogously to the case of unary operations $\mathscr{O}_1$.

Cut elimination for the sequent calculus $\mathsf{S}+ \mathsf{cut}$ will be established by means of the following theorem.
\begin{theorem}\label{explicit fixed-point2}
Every contractive mapping $\mathscr M$ from $\mathscr{RP}$ to $\mathscr{RP}$ has a unique fixed-point. Moreover, if 
\[\mathscr{M}(\mathscr{CE}_{\alpha, n, k}\cap\mathscr{RP}) \subset \mathscr{CE}_{\alpha, n, k+1} \quad (\mathscr{M}(\mathscr{SM}_{\alpha, n, k}\cap\mathscr{RP}) \subset \mathscr{SM}_{\alpha, n, k+1}),\] 
then the fixed-point of $\mathscr{M}$ is cut-eliminating (slimming).
\end{theorem}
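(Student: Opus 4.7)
The plan is to run the transfinite construction of Theorem~\ref{explicit fixed-point} inside $\mathscr{RP}$, and then to replay the argument of Theorem~\ref{co-induction principle} on the resulting fixed-point. Since $\mathrm{id}_\mathsf{P}$ belongs to $\mathscr{RP}$, I start the recursion with seed $\mathsf{a}_0 \coloneq \mathrm{id}_\mathsf{P}$ and verify, by transfinite induction on $\gamma<\omega_1$, that every approximant $\mathsf{a}_\gamma$ remains root-preserving. For the successor step $\gamma = \gamma_0+1$, root-preservation follows from $\mathsf{a}_\gamma = \mathscr{M}(\mathsf{a}_{\gamma_0})$ together with the assumption $\mathscr{M}(\mathscr{RP}) \subset \mathscr{RP}$. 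For the limit cases $\gamma = \omega^2\cdot\alpha+\omega\cdot(n+1)$ and $\gamma = \omega^2\cdot\alpha$ with $\alpha$ a limit ordinal, each value $\mathsf{a}_\gamma(\vec{\pi})$ is defined to coincide with either an earlier approximant at $\vec{\pi}$ or with $\mathsf{a}_0(\vec{\pi})$, so root-preservation is immediate from the inductive hypothesis.

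The only delicate case is $\gamma = \omega^2\cdot(\alpha+1)$, where on the ``good'' branch
\[ \mathsf{a}_\gamma(\vec{\pi}) = \lim_{n\to\infty} \mathsf{a}_{\omega^2\cdot\alpha+\omega\cdot n}(\vec{\pi}). \]
The key observation I plan to invoke is that, by Proposition~\ref{Completeness1}, this limit shares its $1$-fragment, and hence its root sequent, with $\mathsf{a}_{\omega^2\cdot\alpha+\omega}(\vec{\pi})$, which is itself root-preserving by the inductive hypothesis. Once all approximants are known to lie in $\mathscr{RP}$, the fixed-point $\mathsf{a}_{\omega_1}$ produced by Theorem~\ref{explicit fixed-point} also lies in $\mathscr{RP}$. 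Uniqueness of the fixed-point inside $\mathscr{RP}$ follows by repeating the uniqueness argument of Theorem~\ref{explicit fixed-point} verbatim, since that argument depends only on contractivity of $\mathscr{M}$ on its own domain.

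For the moreover clause, I set $\mathsf{b} \coloneq \mathsf{a}_{\omega_1} \in \mathscr{RP}$ and replay the proof of Theorem~\ref{co-induction principle} to show, by transfinite induction on $\gamma = \omega^2\cdot\alpha+\omega\cdot n+k$, that $\mathsf{b} \in \mathscr{CE}_{\alpha,n,k}$. The successor step is the only one that uses the extra hypothesis: combining $\mathsf{b} \in \mathscr{CE}_{\alpha,n,k}$ with $\mathsf{b} \in \mathscr{RP}$ yields
\[ \mathsf{b} = \mathscr{M}(\mathsf{b}) \in \mathscr{M}(\mathscr{CE}_{\alpha,n,k} \cap \mathscr{RP}) \subset \mathscr{CE}_{\alpha,n,k+1}. \]
The three limit cases of Theorem~\ref{co-induction principle} go through unchanged, since continuity of the family $\mathscr{CE}$ does not involve $\mathscr{M}$. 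Hence $\mathsf{b} \in \bigcap_{\alpha<\omega_1}\mathscr{CE}_{\alpha,0,0} = \mathscr{CE}$, which gives the cut-eliminating conclusion; the slimming variant is completely analogous. The main obstacle I expect is precisely the root-preservation check at the limit step, and it is exactly Proposition~\ref{Completeness1} that resolves it.
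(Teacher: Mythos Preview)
Your proposal is correct and follows essentially the same approach as the paper, which simply says to rerun the proofs of Theorems~\ref{explicit fixed-point} and~\ref{co-induction principle} with $\mathscr{O}_m$ replaced by $\mathscr{RP}$ and $\mathsf{a}_0$ taken to be the identity. You have supplied the details the paper omits, in particular the check that the approximants remain root-preserving at the limit stage $\omega^2\cdot(\alpha+1)$ via Proposition~\ref{Completeness1}.
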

\begin{proof}
This result is obtained in the same way as Theorem \ref{explicit fixed-point} and Theorem \ref{co-induction principle}, where $\mathscr{O}_m$ is replaced with $\mathscr{RP}$ and $\mathsf{a}_0$ is defined as the identity function. 
\end{proof}



In the next section, as an intermediate step in the proof of cut-elimination, we define operations that eliminate single applications of the rule ($\mathsf{cut}$), not all of them at once. In order to prove their properties accurately, we introduce three more continuous families of operations on $\mathsf{P}$. 
 
An operation $\mathsf a$ from $ \mathscr{O}_2$ is \emph{non-expansive} if
\[\vec{\pi}\sim^s_i \vec{\pi}^\prime \Longrightarrow \mathsf{a}(\vec{\pi})\sim^s_i \mathsf{a}(\vec{\pi}^\prime),\]
where $\vec{\pi}\sim^s_i \vec{\pi}^\prime$ is an abbreviation for $\pi_1 \sim^s_i \pi^\prime_1$, $\pi_2 \sim^s_i \pi^\prime_2$.
We call a binary operation $\mathsf{a} $ \emph{$(\alpha, n, k)$-non-expansive} if, for any $\vec{\pi}, \vec{\pi}^\prime \in \mathsf{P}^2$ such that $\vec{\pi}\sim^s_i \vec{\pi}^\prime $, the following conditions hold: 
\begin{itemize}
\item $\mathsf{a}(\vec{\pi}) \sim^s_i \mathsf{a}(\vec{\pi}^\prime)$ whenever $\max \{\lVert \vec{\pi}\rVert_s, \lVert \vec{\pi}^\prime\rVert_s\} < \alpha$,
\item $\mathsf{a}(\vec{\pi}) \sim^s_i \mathsf{a}(\vec{\pi}^\prime)$ whenever $\max \{\lVert \vec{\pi}\rVert_s, \lVert \vec{\pi}^\prime\rVert_s\} = \alpha$ and $i \leqslant n$,
\item $\mathsf{a}(\vec{\pi}) \sim^s_i \mathsf{a}(\vec{\pi}^\prime)$ if $\max \{\lVert \vec{\pi} \rVert_s,\lVert \vec{\pi}^\prime\rVert_s \}=\alpha$, $i = n+1$ and $\max \{\lvert \vec{\pi} \rvert,\lvert \vec{\pi}^\prime\rvert \}<k$.
\end{itemize}
We denote the set of non-expansive operations by $\mathscr{NE}$ and the set of $(\alpha, n, k)$-non-expansive operations by $\mathscr{NE}_{\alpha,n,k}$. Trivially, the family $\mathscr{NE}_{\alpha,n,k}$ is continuous and
\[\mathscr{NE} = \bigcap\limits_{\alpha < \omega_1} \mathscr{NE}_{\alpha, 0, 0}.\]

An operation $\mathsf a$ from $ \mathscr{O}_2$ is \emph{non-pollutive} (\emph{non-fattening}) if
\[\vec{\pi}\in \mathsf{CF}_i(s)\times \mathsf{CF}_i(s) \Rightarrow \mathsf{a}(\vec{\pi})\in \mathsf{CF}_i(s) \quad (\vec{\pi}\in \mathsf{SM}_i(s)\times \mathsf{SM}_i(s) \Rightarrow \mathsf{a}(\vec{\pi})\in \mathsf{SM}_i(s)).\]
For $\alpha<\omega_1$ and $n,k \in \mathbb{N}$, a binary operation $\mathsf{a} $ is \emph{$(\alpha, n, k)$-non-pollutive} (\emph{$(\alpha, n, k)$-non-fattening}) if, for each $s\in\mathit{Fm}_\circ$, each $i\in \mathbb{N}$ and each $\vec{\pi}\in \mathsf{CF}_i(s)\times \mathsf{CF}_i(s)$ ($\vec{\pi}\in \mathsf{SM}_i(s)\times \mathsf{SM}_i(s)$), 
\begin{itemize}
\item $\mathsf{a}(\vec{\pi}) \in \mathsf{CF}_i(s)$ ($\mathsf{a}(\vec{\pi}) \in \mathsf{SM}_i(s)$) whenever $\lVert \vec{\pi}\rVert_s < \alpha$,
\item $\mathsf{a}(\vec{\pi}) \in \mathsf{CF}_i(s)$ ($\mathsf{a}(\vec{\pi}) \in \mathsf{SM}_i(s)$) whenever $\lVert \vec{\pi}\rVert_s = \alpha$ and $i \leqslant n$,
\item $\mathsf{a}(\vec{\pi}) \in \mathsf{CF}_i(s)$ ($\mathsf{a}(\vec{\pi}) \in \mathsf{SM}_i(s)$) whenever $\lVert \vec{\pi}\rVert_s = \alpha$, $i = n+1$ and $\lvert \vec{\pi} \rvert<k$.
\end{itemize}
The set of $(\alpha, n, k)$-non-pollutive ($(\alpha, n, k)$-non-fattening) operations is denoted by $\mathscr{NP}_{\alpha,n,k}$ ($\mathscr{NF}_{\alpha,n,k}$). Both families are continuous and
\[\mathscr{NP} = \bigcap\limits_{\alpha < \omega_1} \mathscr{NP}_{\alpha, 0, 0} \quad \text{and} \quad \mathscr{NF} = \bigcap\limits_{\alpha < \omega_1} \mathscr{NF}_{\alpha, 0, 0},\]
where $\mathscr{NP}$ and $\mathscr{NF}$ are the sets of non-pollutive and non-fattening operations respectively.

A pair $(\pi,\tau)$ is called \emph{cut pair} if $\pi$ is an $\infty$-proof of $\Sigma;\Gamma\Rightarrow A, \Delta$ and $\tau$ is an $\infty$-proof of $\Sigma; \Gamma, A\Rightarrow \Delta$. For a cut pair $(\pi,\tau)$, the sequent $\Sigma;\Gamma\Rightarrow \Delta$ is its \emph{cut result} and the formula $A$ is its \emph{cut formula}.
A binary operation $\mathsf{a}\colon \mathsf{P}\times \mathsf{P} \to \mathsf{P}$ is \emph{$A$-removing} if it maps every cut pair $(\pi,\tau)$ with the cut formula $A$ to an $\infty$-proof of its cut result. We denote the set of $A$-removing operations on $\mathsf{P}$ by $\mathscr{RE}_A$ and define a notion of \emph{contractive mapping from $\mathscr{RE}_A$ to $\mathscr{RE}_A$} analogously to the case of binary operations $\mathscr{O}_2$.

In the next section, we will prove properties of operations eliminating single applications of the rule ($\mathsf{cut}$) by means of the following theorem.

 
\begin{theorem}\label{explicit fixed-point3}
Every contractive mapping $\mathscr M$ from $\mathscr{RE}_A$ to $\mathscr{RE}_A$ has a unique fixed-point. In addition, if $\mathscr{M}(\mathscr{NE}_{\alpha, n, k}\cap\mathscr{RE}_A) \subset \mathscr{NE}_{\alpha, n, k+1}$ ($\mathscr{M}(\mathscr{NP}_{\alpha, n, k} \cap\mathscr{RE}_A) \subset \mathscr{NP}_{\alpha, n, k+1}$, $\mathscr{M}(\mathscr{NF}_{\alpha, n, k} \cap\mathscr{RE}_A) \subset \mathscr{NF}_{\alpha, n, k+1}$), then the fixed-point of $\mathscr{M}$ is non-expansive (non-pollutive, non-fattening). 
\end{theorem}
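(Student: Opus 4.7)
The plan is to reprise the arguments of Theorem \ref{explicit fixed-point} and Theorem \ref{co-induction principle} almost verbatim, replacing the ambient set $\mathscr{O}_m$ by $\mathscr{RE}_A$. The first thing to check is that $\mathscr{RE}_A$ is non-empty: given a cut pair $(\pi,\tau)$ with cut formula $A$ and cut result $\Sigma;\Gamma\Rightarrow \Delta$, simply applying the rule $(\mathsf{cut})$ to $\pi$ and $\tau$ produces an $\infty$-proof of $\Sigma;\Gamma\Rightarrow \Delta$; call this operation $\mathsf{a}_0$. Clearly $\mathsf{a}_0\in\mathscr{RE}_A$, so it can be used as the base of the transfinite recursion.

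Next, I would define a transfinite sequence $\mathsf{a}_\lambda$ for $\lambda<\omega_1$ exactly as in Theorem \ref{explicit fixed-point}: starting from $\mathsf{a}_0$, putting $\mathsf{a}_{\lambda+1}=\mathscr{M}(\mathsf{a}_\lambda)$, and handling the four limit-cases through $\lvert\vec{\pi}\rvert$ and the $\lim_{n\to\infty}$ operator of Proposition \ref{Completeness1}. The key novelty is to verify that each $\mathsf{a}_\lambda$ still belongs to $\mathscr{RE}_A$. For successor ordinals this is immediate from $\mathscr{M}(\mathscr{RE}_A)\subset \mathscr{RE}_A$. For the fallback clauses where $\mathsf{a}_\lambda(\vec{\pi})=\mathsf{a}_0(\vec{\pi})$, it is trivial. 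For the genuine limit clauses, the preceding members of the sequence are all $A$-removing at $\vec{\pi}$, so each $\mathsf{a}_{\omega^2\cdot\alpha+\omega\cdot n}(\vec{\pi})$ is an $\infty$-proof of the cut result; since the limit from Proposition \ref{Completeness1} agrees with the first member of the sequence on its $1$-fragment, it has the same root sequent, and hence lies in $\mathscr{RE}_A$ as well.

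Once membership in $\mathscr{RE}_A$ is established along the sequence, the verbatim inductive argument of Theorem \ref{explicit fixed-point} yields $\mathsf{a}_\beta \simeq_\beta \mathsf{a}_\gamma$ for all $\beta<\gamma<\omega_1$, and therefore $\mathsf{a}_{\omega_1}(\vec{\pi}) \coloneq \mathsf{a}_{\omega^2\cdot(\lVert\vec{\pi}\rVert+1)}(\vec{\pi})$ is the unique fixed-point of $\mathscr{M}$ and lies in $\mathscr{RE}_A$. Uniqueness follows from contractivity in the usual way, since any two fixed-points $\mathsf{b},\mathsf{b}'$ satisfy $\mathsf{b}\simeq_\gamma \mathsf{b}'$ for every $\gamma<\omega_1$.

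For the ``moreover'' clause, I would apply Theorem \ref{co-induction principle} inside $\mathscr{RE}_A$. The families $\mathscr{NE}_{\alpha,n,k}\cap\mathscr{RE}_A$, $\mathscr{NP}_{\alpha,n,k}\cap\mathscr{RE}_A$ and $\mathscr{NF}_{\alpha,n,k}\cap\mathscr{RE}_A$ are continuous as subsets of $\mathscr{RE}_A$ since intersection with $\mathscr{RE}_A$ commutes with the intersections defining continuity, and the hypothesis combined with $\mathscr{M}(\mathscr{RE}_A)\subset\mathscr{RE}_A$ yields, e.g., $\mathscr{M}(\mathscr{NE}_{\alpha,n,k}\cap\mathscr{RE}_A)\subset \mathscr{NE}_{\alpha,n,k+1}\cap\mathscr{RE}_A$. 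The same transfinite induction as in Theorem \ref{co-induction principle} then shows that the fixed-point belongs to the intersection over all $\alpha<\omega_1$, and the three cases $\mathscr{NE}$, $\mathscr{NP}$, $\mathscr{NF}$ are handled uniformly. The main obstacle, and really the only non-routine point, is checking that the $\lim$ construction respects the $A$-removing property, i.e.\ that the limit of $\infty$-proofs of a fixed sequent is itself an $\infty$-proof of that sequent; this is guaranteed by the fact that the root sequent is already visible in the $1$-fragment.
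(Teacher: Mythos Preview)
Your proposal is correct and follows essentially the same route as the paper: the paper's proof is the one-line remark that the argument of Theorem~\ref{explicit fixed-point} and Theorem~\ref{co-induction principle} goes through with $\mathscr{O}_m$ replaced by $\mathscr{RE}_A$, taking as $\mathsf{a}_0$ the operation that joins a cut pair by $(\mathsf{cut})$ and returns the first argument on all other pairs. The only small gap in your write-up is that you describe $\mathsf{a}_0$ only on cut pairs with cut formula $A$; to land in $\mathscr{RE}_A\subset\mathscr{O}_2$ you must extend it to all of $\mathsf{P}^2$ (returning the first argument, as the paper does, suffices and preserves annotations).
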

\begin{proof}
The theorem is obtained in the same way as Theorem \ref{explicit fixed-point} and Theorem \ref{co-induction principle}, where $\mathscr{O}_m$ is replaced with $\mathscr{RE}_A$ and the operation $\mathsf{a}_0\colon \mathsf{P}\times\mathsf{P}\to \mathsf{P}$ is defined as follows: for a cut pair $(\pi,\tau)$ with the cut formula $A$, this operation joins $\infty$-proofs $\pi$ and $\tau$ with an appropriate instance of the rule $(\mathsf{cut})$; for all other pairs, the operation $\mathsf{a}_0$ returns the first argument. 
\end{proof}

\section{Elimination of single cuts}
\label{s4.2}

In the given section, we define an operation that eliminates one root application of the rule ($\mathsf{cut}$).  

\begin{propos}\label{reabadeq}
For any formula $A$, there exists a non-expansive, non-pollutive and non-fattening operation $\mathsf{re}_A\in \mathscr{RE}_A$.
\end{propos}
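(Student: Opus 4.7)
The plan is to proceed by induction on the complexity of the formula $A$. For the inductive step, we may assume that for each proper subformula $A'$ of $A$ we have already constructed operations $\mathsf{re}_{A'} \in \mathscr{RE}_{A'}$ that are non-expansive, non-pollutive, and non-fattening. To construct $\mathsf{re}_A$ itself, we apply Theorem \ref{explicit fixed-point3} to a suitable contractive mapping $\mathscr{M} \colon \mathscr{RE}_A \to \mathscr{RE}_A$ built from a case analysis on the topmost rules of the cut pair.

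Given $\mathsf{a} \in \mathscr{RE}_A$, the operation $\mathscr{M}(\mathsf{a})$ sends a cut pair $(\pi, \tau)$ to an $\infty$-proof of its cut result as follows. If $\pi$ or $\tau$ is an initial sequent, the cut result is derived directly, appealing to Lemma \ref{AtoA} and admissible weakening (Lemma \ref{L2}) as required. If the root rule of $\pi$ (or $\tau$) is a non-modal rule whose principal formula is not $A$, we permute the cut upward: reapply the same root rule to the proofs obtained by invoking $\mathsf{a}$ on each premise paired with $\tau$ (or $\pi$, respectively). A modal rule at the root of $\pi$ with $A$ belonging to the discarded context $\Psi$ is handled by weakening the premise. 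In the propositional principal case, when $A = B \to C$ and both roots introduce $A$, the standard reduction to cuts on $B$ and $C$ uses the inductively given $\mathsf{re}_B$ and $\mathsf{re}_C$. Finally, when $A$ is $\Box C$ or $\Box^+ C$, the cut cannot be reduced until $A$ appears as a side formula of a modal rule in $\tau$, at which point the premise of that rule exposes $C$ on the left; combined with the premise of the ($\Box$) or ($\Box^+$) application in $\pi$, with weakening to reconcile the $\Sigma_0, \Lambda, \Pi$ contexts, this gives a cut on $C$ that is handled by $\mathsf{re}_C$.

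For Theorem \ref{explicit fixed-point3} to apply, $\mathscr{M}$ must be contractive and must map each of the continuous families $\mathscr{NE}_{\alpha, n, k} \cap \mathscr{RE}_A$, $\mathscr{NP}_{\alpha, n, k} \cap \mathscr{RE}_A$, and $\mathscr{NF}_{\alpha, n, k} \cap \mathscr{RE}_A$ into its $(\alpha, n, k+1)$-counterpart. Contractivity follows because every recursive call to $\mathsf{a}$ operates on arguments that are strictly simpler according to the $\sim_{\alpha,n,k}$ scale: permutation through a non-modal rule strictly decreases the local height, while permutation through a modal rule strictly decreases the global height. The same accounting yields preservation of the three continuous families, with the additional observation that the principal reductions introduce only cuts on proper subformulas of $A$, which are handled by the inductively obtained $\mathsf{re}_{A'}$ (already in the intersection of all three families by hypothesis).

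The main technical obstacle is expected to be the modal principal case, in particular when $A = \Box^+ C$. Here the right premise of a ($\Box^+$) application in $\pi$ again has $\Box^+ C$ as its principal formula on the right, so a principal reduction cannot be carried out entirely in terms of $\mathsf{re}_C$ but must also re-invoke $\mathsf{a}$ on a cut involving $\Box^+ C$; it is precisely the contractivity of $\mathscr{M}$ that lets the fixed-point theorem absorb this apparent circularity. Further care is needed to align the side-formula contexts of the modal rules of $\pi$ and $\tau$ before cutting on $C$, and to verify that the resulting $\infty$-proof carries the expected annotation set so that it indeed lies in $\mathscr{RE}_A$.
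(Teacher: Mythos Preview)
Your plan matches the paper's: induction on $A$, with Theorem \ref{explicit fixed-point3} supplying the fixed point in the atomic and modal cases. Two organisational differences are worth noting. The paper does \emph{not} use a fixed point for $A=\bot$ or $A=B\to C$; these are dispatched directly via the inversion operations of Lemma \ref{inversion}, e.g.\ $\mathsf{re}_{B\to C}(\pi,\tau)=\mathsf{re}_C(\mathsf{re}_B(\mathsf{wk}_{\emptyset;C}(\mathsf{ri}_{B\to C}(\tau)),\mathsf{i}_{B\to C}(\pi)),\mathsf{li}_{B\to C}(\tau))$. Also, when $A=p$ and $\pi$ is an initial sequent whose cut result is \emph{not} initial (namely $\Gamma=p,\Gamma_0$), you must contract the two copies of $p$ in $\tau$; this is Lemma \ref{weakcontraction}, not Lemma \ref{AtoA} or weakening.

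The real gap is in your contractivity sketch for $A=\Box^+ B$. In the principal--principal subcase (both $\pi$ and $\tau$ end in $(\Box^+)$, with $\Box^+ B$ principal in $\pi$ and side in $\tau$, and $\Box^+ B\notin\Delta$), the recursive call $\mathsf{a}(\pi_1'',\tau_1')$ feeding the right premise does \emph{not} have strictly smaller global height: when $s=A$ one only obtains $\lVert\pi_1''\rVert_A\oplus\lVert\tau_1'\rVert_A\leqslant\lVert\pi\rVert_s\oplus\lVert\tau\rVert_s$. Contractivity holds because this call is placed under a fresh application of $(\Box^+)$, so $\sim^A_n$ on the subproofs promotes to $\sim^A_{n+1}$ on the whole; it is the $n$-coordinate of $(\alpha,n,k)$ that advances, not $\alpha$. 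And to get even the non-strict bound you need $\lVert\pi_1''\rVert_A\leqslant\lVert\pi_1''\rVert_B+1\leqslant\lVert\pi_1\rVert_B+1\leqslant\lVert\pi\rVert_s$, which invokes Lemma \ref{annotation lemma} together with the observation that $\Box^+ B\notin\Delta$ forces $s\neq B$ (whence $\lVert\pi_1\rVert_B<\lVert\pi\rVert_s$). Your statement that ``permutation through a modal rule strictly decreases the global height'' is therefore false as written, and the argument will not close without these two ingredients made explicit.
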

In order to prove this proposition, we first establish admissibility of several standard auxiliary rules. A unary operation $\mathsf{a}:\mathsf{P}\to \mathsf{P}$ is \emph{non-expansive} if
\[\pi\sim^s_i \pi^\prime \Longrightarrow \mathsf{a}(\pi)\sim^s_i \mathsf{a}(\pi^\prime) .\]
An operation $\mathsf{a}:\mathsf{P}\to \mathsf{P}$ is \emph{non-pollutive} (\emph{non-fattening}) if
\[ \pi\in\mathsf{CF}_n (s) \Longrightarrow \mathsf{a}(\pi)\in\mathsf{CF}_n (s) \quad \left(   \pi\in\mathsf{SM}_n (s) \Longrightarrow \mathsf{a}(\pi)\in\mathsf{SM}_n (s) \right) .\]
A single-premise inference rule is called \emph{strongly admissible in the system $\mathsf{S} +\mathsf{cut}$} if there exists a non-expansive, non-pollutive and non-fattening operation $\mathsf{a}\colon\mathsf{P} \to \mathsf{P}$ that maps any $\infty$-proof of the premise to an $\infty$-proof of the conclusion. The operation $\mathsf{a}$ must also satisfy the conditions: $\lvert \mathsf{a}(\pi)\rvert \leqslant \lvert \pi \rvert$ and $\lVert \mathsf{a}(\pi)\rVert_s \leqslant \lVert \pi\rVert_s$ for any $\pi \in \mathsf{P}$ and any $s\in\mathit{Ann}(\pi)$.

In the following lemmas, the required non-expansive, non-pollutive and non-fattening operations can be defined in the usual way by induction on the local heights of $\infty$-proofs for premises. Therefore, we omit the proofs.
 
\begin{lemma}\label{strongweakening}
For any finite multisets of formulas $\Phi$ and $\Psi$, the inference rule
\begin{gather*}
\AXC{$\Sigma ; \Gamma\Rightarrow\Delta$}
\LeftLabel{$\mathsf{wk}_{\Phi; \Psi}$}
\UIC{$\Sigma ;\Phi,\Gamma\Rightarrow\Delta,\Psi$}
\DisplayProof
\end{gather*}
is strongly admissible in $\mathsf{S} +\mathsf{cut}$. 
\end{lemma}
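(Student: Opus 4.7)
The plan is to define the required operation $\mathsf{a} = \mathsf{wk}_{\Phi;\Psi}$ directly by recursion on the local height $|\pi|$, which is justified because $\hat{\pi}$ is a finite tree and $|\pi|$ is therefore a natural number. At the root I split into three cases. If $\pi$ is a single node marked by an initial sequent, $\mathsf{a}(\pi)$ is the corresponding initial sequent with $\Phi$ added on the left and $\Psi$ added on the right — the axiom schemes $p$ and $\bot$ are closed under such extensions of the context. If the root rule is propositional or a cut, each premise has strictly smaller local height, so I apply $\mathsf{a}$ recursively to each premise and reassemble by the same rule, which is always schematic in its contexts. If the root rule is a modal rule with conclusion $\Sigma;\Phi_0,\Box\Lambda,\Box^+\Pi \Rightarrow \Box A,\Psi_0$ (or with $\Box^+ A$ in place of $\Box A$), I do \emph{not} recurse: the premise(s) stay literally unchanged, and I just replace the modal inference by the same rule with enlarged contexts $\Phi,\Phi_0$ and $\Psi_0,\Psi$. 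The resulting tree is again constructed according to the rules of $\mathsf{S}+\mathsf{cut}$.

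To see $\mathsf{a}(\pi)$ is an $\infty$-proof, note that each infinite branch of $\mathsf{a}(\pi)$ agrees with an infinite branch of $\pi$ from the first modal-rule conclusion upward (since the recursion stops exactly there and never touches subtrees above a modal rule), so it inherits a tail satisfying conditions (a)--(d). The tree $\widehat{\mathsf{a}(\pi)}$ is isomorphic to $\hat\pi$ (with sequents relabelled by their weakenings), hence $|\mathsf{a}(\pi)| = |\pi|$. Similarly, for each $s \in \mathit{Ann}(\mathsf{a}(\pi))$, the annotated tree $\mathsf{a}(\pi)^s$ has the same shape as $\pi^s$ with possibly a different annotation $s$ admissible at the root (the new root sequent may expose more annotation choices than the old one, but none are removed), yielding $\|\mathsf{a}(\pi)\|_s \leqslant \|\pi\|_s$.

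The three structural properties follow immediately from this description. Non-expansiveness: if $\pi \sim^s_i \pi'$, the $i$-fragments of $\pi^s$ and $\pi'^s$ are identical, and $\mathsf{a}$ acts only by relabelling root sequents and, along non-modal rules, propagating the same relabelling upward, so the $i$-fragments of $\mathsf{a}(\pi)^s$ and $\mathsf{a}(\pi')^s$ coincide. Non-pollutiveness and non-fattening: the operation never introduces a new instance of $(\mathsf{cut})$, and every modal rule it produces has the same principal multisets $\Lambda,\Pi$ (hence the same slimness status) as in $\pi$; therefore any cut-freeness or slimness witnessed in the $n$-fragment of $\pi^s$ transfers to $\mathsf{a}(\pi)^s$.

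The only mildly delicate point — the reason the paper says the proofs are straightforward but still worth flagging — is that the recursion must stop at modal rules rather than entering the subtree. Going through a modal premise is unsound here (one would change the meaning of the rule) and also unnecessary, since the schematic contexts $\Phi,\Psi$ on the modal rule conclusion already absorb the weakening; this is what simultaneously guarantees preservation of the infinite-branch tail condition and yields the bounds $|\mathsf{a}(\pi)| \leqslant |\pi|$ and $\|\mathsf{a}(\pi)\|_s \leqslant \|\pi\|_s$ without any transfinite argument.
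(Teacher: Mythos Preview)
Your proposal is correct and takes exactly the approach the paper indicates: the paper simply says that the operation ``can be defined in the usual way by induction on the local heights of $\infty$-proofs for premises'' and omits all details, while you spell out the recursion explicitly, including the key observation that at a modal rule one stops recursing and absorbs $\Phi,\Psi$ into the schematic contexts of the conclusion. Your treatment of the required side conditions (non-expansiveness, non-pollutiveness, non-fattening, and the height bounds) is also sound.
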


\begin{lemma}\label{inversion}
For any formulas $A$ and $B$, the following inference rules are strongly admissible in $\mathsf{S} +\mathsf{cut}$:
\begin{gather*}
\AXC{$\Sigma ;\Gamma , A \rightarrow B \Rightarrow  \Delta$}
\LeftLabel{$\mathsf{li}_{A \to B}$}
\RightLabel{ ,}
\UIC{$\Sigma ;\Gamma ,B \Rightarrow  \Delta$}
\DisplayProof\qquad
\AXC{$\Sigma ;\Gamma , A \rightarrow B \Rightarrow  \Delta$}
\LeftLabel{$\mathsf{ri}_{A \to B}$}
\RightLabel{ ,}\UIC{$\Sigma ;\Gamma  \Rightarrow  A, \Delta$}
\DisplayProof\\\\
\AXC{$\Sigma ;\Gamma  \Rightarrow   A \rightarrow B, \Delta$}
\LeftLabel{$\mathsf{i}_{A \to B}$}
\RightLabel{ ,}
\UIC{$\Sigma ;\Gamma ,A \Rightarrow B, \Delta$}
\DisplayProof\qquad
\AXC{$\Sigma ;\Gamma  \Rightarrow   \bot, \Delta$}
\LeftLabel{$\mathsf{i}_{\bot}$}
\RightLabel{ .}
\UIC{$\Sigma ;\Gamma  \Rightarrow \Delta$}
\DisplayProof
\end{gather*}
\end{lemma}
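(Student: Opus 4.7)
The plan is to define, for each of the four rules, a concrete operation $\mathsf{a}:\mathsf{P}\to\mathsf{P}$ by structural recursion on the local height $\lvert\pi\rvert$ of the input $\infty$-proof $\pi$. Since $\lvert\pi\rvert$ measures the longest branch of $\hat\pi$, i.e.\ of the truncation of $\pi$ at the premises of modal rules, the recursion descends through axioms, propositional rules and $\mathsf{cut}$, and stops at modal rules. I sketch the construction of $\mathsf{li}_{A\to B}$; the other three operations follow the same pattern.

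Distinguish the last rule of $\pi$. If $\pi$ is an axiom, then $\Sigma;\Gamma, A\to B\Rightarrow\Delta$ is initial because some atom $p$ occurs in $\Gamma\cap\Delta$ or $\bot\in\Gamma$; either way, since $A\to B$ is neither $\bot$ nor a propositional variable, $\Sigma;\Gamma, B\Rightarrow\Delta$ is initial. If $\pi$ ends in $\mathsf{\to_L}$ whose principal formula is $A\to B$ itself, set $\mathsf{li}_{A\to B}(\pi)$ to be the left premise, which already has the desired endsequent. If $\pi$ ends in any other propositional rule, another $\mathsf{\to_L}$, or in $\mathsf{cut}$, recursively apply $\mathsf{li}_{A\to B}$ to each premise and re-apply the same rule; the side formula $A\to B$ is carried upward unchanged, so this is well-typed. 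If $\pi$ ends in a modal rule $(\Box)$ or $(\Box^+)$, then $A\to B$ must sit in the context multiset $\Phi$, which is discarded in passing to the premises; I therefore do not recurse but simply re-apply the same modal inference with premises unchanged, replacing $A\to B$ in $\Phi$ by $B$. The constructions of $\mathsf{ri}_{A\to B}$, $\mathsf{i}_{A\to B}$ and $\mathsf{i}_\bot$ are strictly analogous, selecting the right premise in the corresponding key case; for $\mathsf{i}_\bot$ one uses that $\bot$ is never a principal formula introduced on the right.

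Verification of the four required properties is routine once the construction is fixed. Non-expansiveness ($\pi\sim^s_i\pi'\Rightarrow \mathsf{a}(\pi)\sim^s_i \mathsf{a}(\pi')$) follows by the same structural recursion: the operation mirrors $\pi$ down to modal premises, and at those premises the subtrees are copied verbatim, so the $i$-fragment of $\mathsf{a}(\pi)^s$ is determined by that of $\pi^s$. Non-pollutiveness holds because no $\mathsf{cut}$ is ever introduced (the key case only deletes an inference, and the recursive cases preserve the outermost rule). Non-fatness holds because every re-applied modal rule reuses the original $\Lambda$ and $\Pi$, so slimness is transferred. The bounds $\lvert\mathsf{a}(\pi)\rvert\leqslant\lvert\pi\rvert$ and $\lVert\mathsf{a}(\pi)\rVert_s\leqslant\lVert\pi\rVert_s$ drop out of the recursion at once, with the key case strictly decreasing both the local height and the height of $\mathit{tr}(\pi^s)$, and all remaining cases preserving or shrinking them.

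The only genuine point to check carefully, and the place I expect to be the main obstacle, is that the output of the construction remains an $\infty$-proof, i.e.\ that every infinite branch still admits a tail satisfying conditions (a)--(d). This is clean because the operation leaves all modal-rule premises untouched, so every infinite branch of $\mathsf{a}(\pi)$ is, below some finite initial segment lying inside $\hat\pi$, isomorphic to an infinite branch of $\pi$; its $\Box^+$-tail is therefore inherited directly. All remaining verifications reduce to bookkeeping on the finite truncations $\hat\pi$.
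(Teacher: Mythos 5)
Your construction is exactly the one the paper has in mind: the author explicitly omits the proof, saying only that the required non-expansive, non-pollutive and non-fattening operations ``can be defined in the usual way by induction on the local heights of $\infty$-proofs for premises,'' and your recursion on $\lvert\pi\rvert$ with the case split (axiom, principal $\to_{\mathsf L}$, other non-modal rules, modal rules with the inverted formula sitting in the discarded context $\Phi$ or $\Psi$) is that standard argument, together with the right observations for why non-expansiveness, non-pollutiveness, non-fattening, the height bounds, and the branch conditions (a)--(d) are preserved. The proposal is correct and takes essentially the same approach as the paper.
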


\begin{lemma}\label{weakcontraction}
For any propositional variable $p$, the inference rules
\begin{gather*}
\AXC{$\Sigma ;\Gamma , p,p \Rightarrow  \Delta$}
\LeftLabel{$\mathsf{acl}_{p}$}
\RightLabel{ ,}
\UIC{$\Sigma ;\Gamma ,p \Rightarrow  \Delta$}
\DisplayProof\qquad
\AXC{$\Sigma ;\Gamma \Rightarrow p,p, \Delta$}
\LeftLabel{$\mathsf{acr}_{p}$}
\UIC{$\Sigma ;\Gamma \Rightarrow p, \Delta$}
\DisplayProof
\end{gather*}
are strongly admissible in $\mathsf{S} +\mathsf{cut}$.
\end{lemma}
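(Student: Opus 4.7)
The plan is to construct the two operations $\mathsf{acl}_p, \mathsf{acr}_p : \mathsf{P} \to \mathsf{P}$ simultaneously by recursion on the local height $\lvert \pi \rvert$ of the $\infty$-proof $\pi$ for the premise, and verify all the required properties (non-expansiveness, non-pollution, non-fattening, $\lvert \mathsf{a}(\pi) \rvert \leqslant \lvert\pi\rvert$ and $\lVert \mathsf{a}(\pi) \rVert_s \leqslant \lVert \pi \rVert_s$) in one sweep. I treat $\mathsf{acl}_p$; $\mathsf{acr}_p$ is symmetric.

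Suppose $\pi$ proves $\Sigma;\Gamma,p,p \Rightarrow \Delta$. The base cases are the two initial sequent schemes. For $\Sigma;\Gamma',q,p,p \Rightarrow q,\Delta$ (with $q$ possibly equal to $p$) and for $\Sigma;\Gamma',p,p,\bot \Rightarrow \Delta$, the contracted sequent $\Sigma;\Gamma',p \Rightarrow \Delta$ (resp.\ with $\bot$) is itself an axiom, so $\mathsf{acl}_p(\pi)$ is simply the corresponding axiom leaf. For propositional rules ($\to_L$), ($\to_R$) and for ($\mathsf{cut}$), the two marked $p$'s sit entirely in the context, so I recurse on each premise using the inductive hypothesis and then reapply the same rule; here the local height of each premise is strictly smaller than $\lvert \pi \rvert$. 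For the modal rules ($\Box$) and ($\Box^+$), the principal formula is of the form $\Box A$ or $\Box^+ A$, and the two occurrences of $p$ lie inside the weakening multiset $\Phi$ (on the left) or $\Psi$ (on the right), but never inside $\Lambda$ or $\Pi$ since $p$ is a propositional variable and therefore not of the form $\Box(\cdot)$ or $\Box^+(\cdot)$. In this case, $\mathsf{acl}_p(\pi)$ is obtained by reapplying the same modal rule with the same unchanged premise(s) but with $\Phi$ replaced by $\Phi$ with one occurrence of $p$ removed. No recursion into the premise is needed. Because $\Lambda$ and $\Pi$ are untouched, the resulting modal application is slim exactly when the original one was.

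Verification of the properties proceeds in parallel with the definition. The bound $\lvert \mathsf{acl}_p(\pi) \rvert \leqslant \lvert \pi \rvert$ is immediate: in the modal cases we keep $\hat{\pi}$ intact at the root, and in the propositional/cut cases the inductive hypothesis gives $\lvert \mathsf{acl}_p(\pi_i) \rvert \leqslant \lvert \pi_i \rvert$. For the global-height bound, observe that the construction never alters which applications of ($\Box^+$) are present nor which annotations they carry, so the quotient tree $\mathit{tr}(\mathsf{acl}_p(\pi)^s)$ is (isomorphic to) a subtree of $\mathit{tr}(\pi^s)$ and hence $\lVert \mathsf{acl}_p(\pi) \rVert_s \leqslant \lVert \pi \rVert_s$. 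Non-pollution and non-fattening follow from the fact that $\mathsf{acl}_p$ introduces no new instances of ($\mathsf{cut}$) and no new modal rule applications beyond the ones already present with the same $\Lambda, \Pi$; in the modal case the slimness status is literally inherited, and in the recursive cases it follows from the induction hypothesis on the premises (since a proof is cut-free / slim in its $n$-fragment iff all its constituent subtrees, down to that depth, are).

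The only delicate point to get right is non-expansiveness. Suppose $\pi \sim^s_i \pi'$ with both proving the premise sequent; I need $\mathsf{acl}_p(\pi) \sim^s_i \mathsf{acl}_p(\pi')$. The $i$-fragments of $\pi^s$ and $(\pi')^s$ coincide, and my construction is rule-directed: it chooses the same rule at the root of $\mathsf{acl}_p(\pi)$ and $\mathsf{acl}_p(\pi')$ (since this choice depends only on the root sequent and its principal connective, not on the subtrees), it touches only the weakening context in the modal case, and for the other rules the recursive hypothesis on shorter premises — together with the fact that $\pi \sim^s_i \pi'$ implies the corresponding premises are $\sim^s_i$-related — gives equal $i$-fragments for the recursively constructed subproofs. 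The main thing to check is that cutting branches at level $i$ of ($\Box^+$) commutes with my rule-by-rule reconstruction; this is routine because my modifications never cross an application of ($\Box^+$) on its right premise, the counter that measures the fragment depth. The cases $i = 0$ and the propagation through propositional rules and ($\mathsf{cut}$) are then straightforward, and the modal cases are immediate from the fact that the modal rule conclusion is reconstructed identically in $\mathsf{acl}_p(\pi)$ and $\mathsf{acl}_p(\pi')$.
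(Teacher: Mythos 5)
Your construction is correct and is exactly the argument the paper has in mind: it explicitly omits the proof, stating only that the operations are "defined in the usual way by induction on the local heights of $\infty$-proofs for premises," which is precisely your recursion. The key observation that makes the induction on local height go through --- that a propositional variable $p$ can never occur among the side formulas $\Box\Lambda, \Box^+\Pi$ of a modal rule, so both contracted occurrences sit in the weakened context and the modal premise is left untouched (hence no recursive call past a modal inference, and slimness, cut-freeness, local and global heights are all preserved on the nose) --- is correctly identified and carried through the verification of all the required properties.
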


Now three lemmas follow, whose proofs have a common structure, but the number of cases in each subsequent proof will increase.\begin{lemma}\label{repadeq}
For any propositional variable $p$, there exists a non-expansive, non-pollutive and non-fattening operation $\mathsf{re}_p\in \mathscr{RE}_p$.
\end{lemma}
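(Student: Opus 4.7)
The plan is to construct $\mathsf{re}_p$ as the unique fixed-point of a suitable contractive mapping $\mathscr{M}\colon\mathscr{RE}_p\to\mathscr{RE}_p$ via Theorem~\ref{explicit fixed-point3}. Non-expansiveness, non-pollutiveness and non-fattening will be read off from the same theorem by checking that $\mathscr{M}$ sends each of $\mathscr{NE}_{\alpha,n,k}\cap\mathscr{RE}_p$, $\mathscr{NP}_{\alpha,n,k}\cap\mathscr{RE}_p$ and $\mathscr{NF}_{\alpha,n,k}\cap\mathscr{RE}_p$ into the corresponding class indexed by $(\alpha,n,k+1)$.

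Given $\mathsf{a}\in\mathscr{RE}_p$ and a cut pair $(\pi,\tau)$ with cut formula $p$, I would define $\mathscr{M}(\mathsf{a})(\pi,\tau)$ by case analysis on the last inference of $\pi$. If $\pi$ is an initial sequent, then either the cut conclusion $\Sigma;\Gamma\Rightarrow\Delta$ is itself an initial sequent (return the appropriate axiom proof), or $\pi$ has the shape $\Sigma;\Gamma',p\Rightarrow p,\Delta$ with $p\in\Gamma$, in which case I return $\mathsf{acl}_p(\tau)$ from Lemma~\ref{weakcontraction}. If $\pi$ ends in $(\mathsf{\rightarrow_L})$ with premises $\pi_1,\pi_2$ and principal formula $A\rightarrow B$, I form $\mathsf{a}(\pi_1,\mathsf{li}_{A\rightarrow B}(\tau))$ and $\mathsf{a}(\pi_2,\mathsf{ri}_{A\rightarrow B}(\tau))$ using Lemma~\ref{inversion} and reapply $(\mathsf{\rightarrow_L})$; the $(\mathsf{\rightarrow_R})$ case is symmetric via $\mathsf{i}_{A\rightarrow B}$. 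If $\pi$ ends in an embedded cut on some formula $B$, I push the $p$-cut into both premises of that cut after weakening $\tau$ with $B$ on the appropriate side via Lemma~\ref{strongweakening}, and reattach a $B$-cut at the bottom. Finally, if $\pi$ ends in a modal rule $(\Box)$ or $(\Box^+)$, then $p$ cannot be the principal formula (being atomic) and so must occur in the weakening context $\Psi$ on the right; the output simply reapplies the same modal rule to the same premise(s) of $\pi$ with $p$ deleted from $\Psi$, and $\tau$ plays no role.

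For contractiveness I would observe that every recursive call to $\mathsf{a}$ is made on a cut pair whose first component is a strict subproof of $\pi$, so the local-size measure $\lvert\vec{\pi}\rvert$ strictly decreases at each such call; this lets the passage from $k$ to $k+1$ in the third coordinate of the equivalence $\sim_{\alpha,n,k}$ be absorbed, while preservation of the annotation and of the global height $\lVert\cdot\rVert_s$ by the auxiliary operations keeps us inside the correct $(\alpha,n)$-stratum. Preservation of $\mathscr{NE}_{\alpha,n,k}$, $\mathscr{NP}_{\alpha,n,k}$ and $\mathscr{NF}_{\alpha,n,k}$ then follows case by case from the analogous properties of $\mathsf{wk}$, $\mathsf{li}$, $\mathsf{ri}$, $\mathsf{i}$ and $\mathsf{acl}_p$ provided by Lemmas~\ref{strongweakening}--\ref{weakcontraction}. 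The delicate point I anticipate is coordinating all three preservation properties simultaneously in the embedded-cut and modal cases: for non-pollutiveness the embedded-cut subcase is handled only vacuously, since $\pi\in\mathsf{CF}_i(s)$ with $i>0$ excludes $\pi$ from ending in $(\mathsf{cut})$, and this non-occurrence must be tracked carefully through the annotations; in the modal subcase the output does not depend on $\mathsf{a}$ at all, and one must argue directly that reapplying the same modal rule with $p$ stripped from the side-context preserves the $i$-fragment of $\pi^s$ intact.
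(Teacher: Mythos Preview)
Your proposal is correct and follows essentially the same approach as the paper: the paper likewise defines $\mathsf{re}_p$ as the fixed-point of a contractive mapping $\mathscr{M}\colon\mathscr{RE}_p\to\mathscr{RE}_p$ obtained by case analysis on the last inference of $\pi$, using exactly the transformations you describe (axiom/$\mathsf{acl}_p$, propagation through $(\to_L)$, $(\to_R)$, $(\mathsf{cut})$ via the inversion and weakening lemmas, and dropping $p$ from the weakening context in the modal cases), and then invokes Theorem~\ref{explicit fixed-point3} after verifying contractiveness and the inclusions for $\mathscr{NE}_{\alpha,n,k}$, $\mathscr{NP}_{\alpha,n,k}$, $\mathscr{NF}_{\alpha,n,k}$. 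Your observation that the recursive calls strictly decrease $\lvert\pi\rvert$ while the auxiliary operations preserve $\lVert\cdot\rVert_s$ is precisely the mechanism the paper exploits in its detailed check of the $(\to_L)$ case.
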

\begin{proof}

We will define the required operation $\mathsf{re}_p$ as a fixed-point of a contractive mapping $\mathscr{M}$ from $\mathscr{RE}_p$ to $\mathscr{RE}_p$.

For an operation $\mathsf a\in \mathscr{RE}_p$, we define $\mathscr M (\mathsf a)\in \mathscr{RE}_p$ following standard reductions for an application of the rule ($\mathsf{cut}$) with an atomic cut formula. Consider arbitrary $\infty$-proofs $\pi$ and $\tau$. If the pair $(\pi,\tau)$ is not a cut pair or is a cut pair with the cut formula different from $p$, then we put $\mathscr{M}(\mathsf{a})(\pi,\tau):=\pi$. 
Otherwise, we define $\mathscr{M}(\mathsf{a})(\pi,\tau)$ according to the following cases, where $\Sigma;\Gamma\Rightarrow \Delta$ is the cut result of $(\pi,\tau)$.
 
Case 1. Suppose $ \pi$ consists only of one initial sequent $\Sigma;\Gamma\Rightarrow \Delta , p$. If $\Sigma;\Gamma\Rightarrow \Delta$ is also an initial sequent, then $\mathscr{M}(\mathsf{a})(\pi,\tau)$ is defined as the $\infty$-proof consisting of the sequent $\Sigma;\Gamma\Rightarrow \Delta$. If $\Sigma;\Gamma\Rightarrow \Delta$ is not an initial sequent, then $\Gamma$ has the form $p,\Gamma_0$ and the root of $\tau$ is marked by $\Sigma;p,p,\Gamma_0 \Rightarrow \Delta$. Applying the operation $\mathsf{acl}_p$ from Lemma \ref{weakcontraction}, we put $\mathscr{M}(\mathsf{a})(\pi,\tau) := \mathsf{acl}_p (\tau)$.   

Case 2. If $ \pi$ consists of more than one sequent, then $\mathscr M (\mathsf a)(\pi,\tau)$ is defined as shown in Fig. 3, where $Q=p$, $\mathsf{R}=\mathsf a$ and $ \mathsf{wk}_{\Phi; \Psi}$, $\mathsf{li}_{A\to C}$, $\mathsf{ri}_{A\to C}$, $\mathsf{i}_{A\to C}$ are operations from Lemma \ref{strongweakening} and Lemma \ref{inversion}.

\begin{center}
\begin{sidewaystable}
\begin{adjustwidth}{0cm}{-2.7cm}
\begin{minipage}{\textwidth}
\medskip
\begin{align*}
\left(
\AXC{$\pi_0$}
\noLine
\UIC{$\vdots$}
\noLine
\UIC{$\Sigma; \Sigma_0,\Lambda, \Pi, \Box^+ \Pi \Rightarrow  A$}
\LeftLabel{$\Box$}
\UIC{$\Sigma;\Phi ,\Box \Lambda, \Box^+ \Pi \Rightarrow \Box A,  \Psi, Q$}
\DisplayProof, \; \tau
\right)
&\longmapsto
\AXC{$\pi_0$}
\noLine
\UIC{$\vdots$}
\noLine
\UIC{$\Sigma; \Sigma_0,\Lambda, \Pi, \Box^+ \Pi \Rightarrow  A$}
\LeftLabel{$\Box$}
\RightLabel{ ,}
\UIC{$\Sigma;\Phi ,\Box \Lambda, \Box^+ \Pi \Rightarrow \Box A,  \Psi$}
\DisplayProof \\\\
\left(
\AXC{$\pi_0$}
\noLine
\UIC{$\vdots$}
\noLine
\UIC{$\Sigma;\Sigma_0, \Lambda, \Pi, \Box^+ \Pi \Rightarrow  A$}
\AXC{$\pi_1$}
\noLine
\UIC{$\vdots$}
\noLine
\UIC{$\Sigma; \Sigma_0,\Lambda, \Pi, \Box^+ \Pi \Rightarrow \Box^+ A$}
\LeftLabel{$\Box^+$}
\BIC{$\Sigma;\Phi ,\Box \Lambda, \Box^+ \Pi \Rightarrow \Box^+ A,  \Psi, Q$}
\DisplayProof, \; \tau
\right)
&\longmapsto
\AXC{$\pi_0$}
\noLine
\UIC{$\vdots$}
\noLine
\UIC{$\Sigma; \Sigma_0,\Lambda, \Pi, \Box^+ \Pi \Rightarrow  A$}
\AXC{$\pi_1$}
\noLine
\UIC{$\vdots$}
\noLine
\UIC{$\Sigma; \Sigma_0,\Lambda, \Pi, \Box^+ \Pi \Rightarrow \Box^+ A$}
\LeftLabel{$\Box^+$}
\RightLabel{ ,}
\BIC{$\Sigma;\Phi ,\Box \Lambda, \Box^+ \Pi \Rightarrow \Box^+ A,  \Psi$}
\DisplayProof \\\\
\left(
\AXC{$\pi_0$}
\noLine
\UIC{$\vdots$}
\noLine
\UIC{$\Sigma;\Gamma,A\Rightarrow C,\Lambda,Q$}
\LeftLabel{$\mathsf{\to_R}$}
\UIC{$\Sigma;\Gamma\Rightarrow A\to C,\Lambda,Q$}
\DisplayProof
, \; \tau
\right)
&\longmapsto
\AXC{$\mathsf{R}(\pi_0, \mathsf{i}_{A \to C}(\tau))$}
\noLine
\UIC{$\vdots$}
\noLine
\UIC{$\Sigma;\Gamma,A\Rightarrow C,\Lambda$}
\LeftLabel{$\mathsf{\to_R}$}
\RightLabel{ ,}
\UIC{$\Sigma;\Gamma\Rightarrow A\to C,\Lambda$}
\DisplayProof \\\\
\left(
\AXC{$\pi_0$}
\noLine
\UIC{$\vdots$}
\noLine
\UIC{$\Sigma;\Lambda, C\Rightarrow \Delta, Q$}
\AXC{$\pi_1$}
\noLine
\UIC{$\vdots$}
\noLine
\UIC{$\Sigma;\Lambda \Rightarrow A,\Delta, Q$}
\LeftLabel{$\mathsf{\to_L}$}
\BIC{$\Sigma;\Lambda, A\to C\Rightarrow \Delta, Q$}
\DisplayProof
,\;\tau
\right)
&\longmapsto
\AXC{$\mathsf{R} (\pi_0, \mathsf{li}_{A\to C} (\tau))$}
\noLine
\UIC{$\vdots$}
\noLine
\UIC{$\Sigma;\Lambda, C\Rightarrow \Delta$}
\AXC{$\mathsf{R} (\pi_1, \mathsf{ri}_{A\to C} (\tau))$}
\noLine
\UIC{$\vdots$}
\noLine
\UIC{$\Sigma;\Lambda \Rightarrow A,\Delta$}
\LeftLabel{$\mathsf{\to_L}$}
\RightLabel{ ,}
\BIC{$\Sigma;\Lambda, A\to C\Rightarrow \Delta$}
\DisplayProof \\\\
\left(
\AXC{$\pi_0$}
\noLine
\UIC{$\vdots$}
\noLine
\UIC{$\Sigma;\Gamma\Rightarrow \Delta, A, Q$}
\AXC{$\pi_1$}
\noLine
\UIC{$\vdots$}
\noLine
\UIC{$\Sigma;A, \Gamma \Rightarrow \Delta, Q$}
\LeftLabel{$\mathsf{cut}$}
\BIC{$\Sigma;\Gamma\Rightarrow \Delta, Q$}
\DisplayProof
,\; \tau
\right)
&\longmapsto
\AXC{$\mathsf{R} (\pi_0, \mathsf{wk}_{\emptyset;A} (\tau))$}
\noLine
\UIC{$\vdots$}
\noLine
\UIC{$\Sigma;\Gamma\Rightarrow \Delta, A$}
\AXC{$\mathsf{R} (\pi_1, \mathsf{wk}_{A;\emptyset} (\tau))$}
\noLine
\UIC{$\vdots$}
\noLine
\UIC{$\Sigma;A, \Gamma\Rightarrow \Delta$}
\LeftLabel{$\mathsf{cut}$}
\RightLabel{ .}
\BIC{$\Sigma;\Gamma\Rightarrow \Delta$}
\DisplayProof 
\end{align*}
\medskip 
\end{minipage}
\\\\
\center{\textbf{Fig. 3}}
\end{adjustwidth}
\end{sidewaystable}
\end{center}

The mapping $\mathscr{M}$ is well defined. We claim that $\mathscr M\colon \mathscr{RE}_{p} \to \mathscr{RE}_{p}$ is contractive. Assume we have two $p$-removing operations $\mathsf a$ and $\mathsf b$ such that $\mathsf a\sim_{\alpha, n,k}\mathsf b$. We shall check that $\mathscr M (\mathsf a)\sim_{\alpha, n,k+1}\mathscr M(\mathsf b)$. In other words, for an arbitrary pair of $\infty$-proofs $(\pi,\tau)$ and any $s\in\mathit{Ann}(\pi)\cap \mathit{Ann}(\tau)$, we shall prove that 
\begin{itemize}
\item $\mathscr M(\mathsf a)(\pi, \tau) = \mathscr M(\mathsf b)(\pi, \tau)$ whenever $\lVert (\pi,\tau)\rVert  < \alpha$,
\item $\mathscr M(\mathsf a)(\pi, \tau)\sim^s_n\mathscr M(\mathsf b)(\pi, \tau)$ whenever $\lVert \pi\rVert_s \oplus \lVert \tau\rVert_s = \alpha$,
\item $\mathscr M(\mathsf a)(\pi, \tau)\sim^s_{n+1}\mathscr M(\mathsf b)(\pi, \tau)$ whenever $\lVert \pi\rVert_s \oplus \lVert \tau\rVert_s = \alpha$ and $\lvert\pi\rvert + \rvert \tau \rvert< k+1$.
\end{itemize}

If the pair $(\pi,\tau)$ is not a cut pair or is a cut pair with the cut formula different from $p$, then $\mathscr{M}(\mathsf{a})(\pi,\tau) =\pi=\mathscr{M}(\mathsf{b})(\pi,\tau)$. The aforementioned conditions hold. Now suppose $(\pi,\tau)$ is a cut pair and the roots of $\pi$ and $\tau$ are marked by $\Sigma;\Gamma\Rightarrow \Delta , p$ and $\Sigma; p,\Gamma\Rightarrow \Delta$ respectively.

If $ \pi$ consists only of one initial sequent $\Sigma;\Gamma\Rightarrow \Delta , p$ and $\Sigma;\Gamma\Rightarrow \Delta $ is also an initial, then $\mathscr{M}(\mathsf{a})(\pi,\tau)$ and $\mathscr{M}(\mathsf{b})(\pi,\tau)$ are equal to the $\infty$-proof consisting of the sequent $\Sigma;\Gamma\Rightarrow \Delta $. In this case, $\mathscr{M}(\mathsf{a})(\pi,\tau) =\mathscr{M}(\mathsf{b})(\pi,\tau)$.
If $\Sigma;\Gamma\Rightarrow \Delta $ is not an initial sequent, then $\mathscr{M}(\mathsf{a})(\pi,\tau) =\mathsf{acl}_p(\tau)=\mathscr{M}(\mathsf{b})(\pi,\tau)$. Obviously, the aforementioned conditions hold.

Let us consider the case when the last inference of the $\infty$-proof $\pi $ is an application of the rule ($\mathsf{\to_L}$): 
\begin{gather}\label{form2}
\AXC{$\pi_0$}
\noLine
\UIC{$\vdots$}
\noLine
\UIC{$\Sigma;\Lambda, C\Rightarrow \Delta, p$}
\AXC{$\pi_1$}
\noLine
\UIC{$\vdots$}
\noLine
\UIC{$\Sigma;\Lambda \Rightarrow A,\Delta, p$}
\LeftLabel{$\mathsf{\to_L}$}
\RightLabel{ .}
\BIC{$\Sigma;\Lambda, A\to C\Rightarrow \Delta, p$}
\DisplayProof
\end{gather}
By the definition of $\mathscr{M}$, the $\infty$-proofs $\mathscr{M}(\mathsf{a})(\pi,\tau)$ and $\mathscr{M}(\mathsf{b})(\pi,\tau)$ have the form 
\begin{gather}\label{form3}
\AXC{$\mathsf{R} (\pi_0, \mathsf{li}_{A\to C} (\tau))$}
\noLine
\UIC{$\vdots$}
\noLine
\UIC{$\Sigma;\Lambda, C\Rightarrow \Delta$}
\AXC{$\mathsf{R} (\pi_1, \mathsf{ri}_{A\to C} (\tau))$}
\noLine
\UIC{$\vdots$}
\noLine
\UIC{$\Sigma;\Lambda \Rightarrow A,\Delta$}
\LeftLabel{$\mathsf{\to_L}$}
\RightLabel{ .}
\BIC{$\Sigma;\Lambda, A\to C\Rightarrow \Delta$}
\DisplayProof
\end{gather}
Notice that, for any $s\in\mathit{Ann}(\pi)\cap \mathit{Ann}(\tau)$, we have $s\in \mathit{Ann}(\pi_0)\cap\mathit{Ann}(\pi_1)\cap \mathit{Ann}(\tau)$, $\lVert \pi_0\rVert_s  \leqslant \lVert \pi\rVert_s$ and $\lVert \pi_1\rVert_s  \leqslant \lVert \pi\rVert_s$. 
Recall that $\lVert \mathsf{li}_{A\to C}(\tau)\rVert_s\leqslant\lVert \tau\rVert_s$. 
Thus, $\lVert \pi_0\rVert_s\oplus \lVert \mathsf{li}_{A\to C}(\tau)\rVert_s\leqslant \lVert \pi_0\rVert_s\oplus \lVert \tau\rVert_s \leqslant\lVert \pi\rVert_s\oplus \lVert \tau\rVert_s$ and $\lVert (\pi_0,\mathsf{li}_{A\to C}(\tau))\rVert  \leqslant\lVert (\pi,\tau)\rVert  $. Analogously, $\lVert \pi_1\rVert_s\oplus \lVert \mathsf{ri}_{A\to C}(\tau)\rVert_s\leqslant\lVert \pi\rVert_s\oplus \lVert \tau\rVert_s$ and $\lVert (\pi_1,\mathsf{ri}_{A\to C}(\tau))\rVert  \leqslant\lVert (\pi,\tau)\rVert  $.

If $\lVert (\pi,\tau)\rVert<\alpha$, then $\lVert (\pi_0,\mathsf{li}_{A\to C}(\tau))\rVert  <\alpha$ and $\lVert (\pi_1,\mathsf{ri}_{A\to C}(\tau))\rVert <\alpha$. Since $\mathsf a\sim_{\alpha, n,k}\mathsf b$, we obtain $\mathsf{a} (\pi_0, \mathsf{li}_{A\to C} (\tau))=\mathsf{b} (\pi_0, \mathsf{li}_{A\to C} (\tau))$ and $\mathsf{a} (\pi_1, \mathsf{ri}_{A\to C} (\tau))=\mathsf{b} (\pi_1, \mathsf{ri}_{A\to C} (\tau))$. Therefore, $\mathscr{M}(\mathsf{a})(\pi,\tau)=\mathscr{M}(\mathsf{b})(\pi,\tau)$.

If $\lVert \pi\rVert_s\oplus \lVert \tau\rVert_s=\alpha$, then $\lVert (\pi_0,\mathsf{li}_{A\to C}(\tau))\rVert \leqslant\lVert \pi_0\rVert_s\oplus \lVert \mathsf{li}_{A\to C}(\tau)\rVert_s\leqslant\alpha$ and $\lVert (\pi_1,\mathsf{ri}_{A\to C}(\tau))\rVert\leqslant\lVert \pi_1\rVert_s\oplus \lVert \mathsf{ri}_{A\to C}(\tau)\rVert_s\leqslant\alpha$. Since $\mathsf a\sim_{\alpha, n,k}\mathsf b$, we have $\mathsf{a} (\pi_0, \mathsf{li}_{A\to C} (\tau))\sim^s_n\mathsf{b} (\pi_0, \mathsf{li}_{A\to C} (\tau))$ and $\mathsf{a} (\pi_1, \mathsf{ri}_{A\to C} (\tau))\sim^s_n\mathsf{b} (\pi_1, \mathsf{ri}_{A\to C} (\tau))$. Thus, $\mathscr{M}(\mathsf{a})(\pi,\tau)\sim^s_n\mathscr{M}(\mathsf{b})(\pi,\tau)$.

Suppose $\lVert \pi\rVert_s\oplus \lVert \tau\rVert_s=\alpha$ and $\lvert \pi\rvert + \lvert \tau\rvert<k+1$. Notice that $\lvert \pi_0\rvert  <\lvert \pi\rvert$ and $\lvert \pi_1\rvert  < \lvert \pi\rvert$. In addition,    $\lvert \mathsf{li}_{A\to C}(\tau)\rvert\leqslant\lvert \tau\rvert$ and $\lvert \mathsf{ri}_{A\to C}(\tau)\rvert\leqslant\lvert \tau\rvert$. Therefore, $\lvert \pi_0\rvert + \lvert \mathsf{li}_{A\to C}(\tau)\rvert<k$ and $\lvert \pi_1\rvert + \lvert \mathsf{ri}_{A\to C}(\tau)\rvert<k$. Also, $\lVert (\pi_0,\mathsf{li}_{A\to C}(\tau))\rVert \leqslant\lVert \pi_0\rVert_s\oplus \lVert \mathsf{li}_{A\to C}(\tau)\rVert_s\leqslant\alpha$ and $\lVert (\pi_1,\mathsf{ri}_{A\to C}(\tau))\rVert\leqslant\lVert \pi_1\rVert_s\oplus \lVert \mathsf{ri}_{A\to C}(\tau)\rVert_s\leqslant\alpha$. From $\mathsf a\sim_{\alpha, n,k}\mathsf b$, we obtain $\mathsf{a} (\pi_0, \mathsf{li}_{A\to C} (\tau))\sim^s_{n+1}\mathsf{b} (\pi_0, \mathsf{li}_{A\to C} (\tau))$ and $\mathsf{a} (\pi_1, \mathsf{ri}_{A\to C} (\tau))\sim^s_{n+1}\mathsf{b} (\pi_1, \mathsf{ri}_{A\to C} (\tau))$. Consequently, $\mathscr{M}(\mathsf{a})(\pi,\tau)\sim^s_{n+1}\mathscr{M}(\mathsf{b})(\pi,\tau)$.

The case has been checked. We omit the cases of other inference rules since they are similar to those already considered. The mapping $\mathscr M\colon \mathscr{RE}_{p} \to \mathscr{RE}_{p}$ is contractive. 
 
Now we claim that 
\[\mathscr{M}(\mathscr{NE}_{\alpha, n, k}\cap\mathscr{RE}_{p}) \subset \mathscr{NE}_{\alpha, n, k+1}.\] 
For a $p$-removing $(\alpha,n,k)$-non-expansive operation $\mathsf a$, we shall check that $\mathscr M (\mathsf a)$ is $(\alpha,n,k+1)$-non-expansive. In other words, for arbitrary pairs of $\infty$-proofs $(\pi,\tau)$, $(\sigma,\eta)$ such that $\pi\sim^s_i \sigma $ and $\tau\sim^s_i \eta $, we shall prove
\begin{itemize}
\item $\mathscr M (\mathsf a)(\pi,\tau) \sim^s_i \mathscr M (\mathsf a)(\sigma,\eta)$ whenever $\max \{\lVert \pi\rVert_s\oplus \lVert \tau\rVert_s, \lVert \sigma\rVert_s\oplus \lVert \eta\rVert_s\} < \alpha$;
\item $\mathscr M (\mathsf a)(\pi,\tau) \sim^s_i \mathscr M (\mathsf a)(\sigma,\eta)$ whenever $\max \{\lVert \pi\rVert_s\oplus \lVert \tau\rVert_s, \lVert \sigma\rVert_s\oplus \lVert \eta\rVert_s\} = \alpha$ and $i \leqslant n$;
\item $\mathscr M (\mathsf a)(\pi,\tau) \sim^s_i \mathscr M (\mathsf a)(\sigma,\eta)$ whenever $\max \{\lVert \pi\rVert_s\oplus \lVert \tau\rVert_s, \lVert \sigma\rVert_s\oplus \lVert \eta\rVert_s\}=\alpha$, $i = n+1$ and $\max \{\lvert \pi \rvert+ \lvert \tau \rvert,\lvert \sigma\rvert + \lvert \eta\rvert\}<k+1$.
\end{itemize}

Since $\pi\sim^s_i \sigma $ and $\tau\sim^s_i \eta $, $s\in \mathit{Ann}(\pi)\cap \mathit{Ann}(\tau)$ and $s\in \mathit{Ann}(\sigma)\cap \mathit{Ann}(\eta)$. Therefore, $s\in \mathit{Ann}(\mathsf{a}(\pi,\tau))\cap \mathit{Ann}(\mathsf{a}(\sigma,\eta))$ and $\mathsf{a}(\pi,\tau) \sim^s_0 \mathsf{a}(\sigma,\eta)$. If $i=0$, then the aforementioned conditions trivially hold. Otherwise, $i>0$ and at least the $1$-fragments of $\pi$ and $\sigma$ ($\tau$ and $\eta$) coincide.

We consider only the case when $(\pi,\tau)$ is a cut pair with the cut formula $p$ and the $\infty$-proof $\pi $ has the form of (\ref{form2}). In this case, $\sigma$ has the same form as $\pi$, but with $\sigma_0$ and $\sigma_1$ instead of $\pi_0$ and $\pi_1$. By the definition of $\mathscr{M}$, the $\infty$-proof $\mathscr{M}(\mathsf{a})(\pi,\tau)$ has the form of (\ref{form3}), where $\mathsf{R}=\mathsf{a}$. The $\infty$-proof $\mathscr{M}(\mathsf{a})(\sigma,\eta)$ is the same as  $\mathscr{M}(\mathsf{a})(\pi,\tau)$, only $\pi_0$, $\pi_1$ and $\tau$ are replaced with $\sigma_0$, $\sigma_1$ and $\eta$.

If $\max \{\lVert \pi\rVert_s\oplus \lVert \tau\rVert_s, \lVert \sigma\rVert_s\oplus \lVert \eta\rVert_s\} < \alpha$, then $\lVert \pi_0\rVert_s\oplus \lVert \mathsf{li}_{A\to C}(\tau)\rVert_s\leqslant\lVert \pi\rVert_s\oplus \lVert \tau\rVert_s<\alpha$ and $\lVert \sigma_0\rVert_s\oplus \lVert \mathsf{li}_{A\to C}(\eta)\rVert_s\leqslant\lVert \sigma\rVert_s\oplus \lVert \eta\rVert_s<\alpha$. Note that  $\pi_0 \sim^s_i \sigma_0$. Moreover, $\mathsf{li}_{A\to C} (\tau) \sim^s_i \mathsf{li}_{A\to C} (\eta)$, because the operation $\mathsf{li}_{A\to C}$ is non-expansive. Since $\mathsf{a}$ is $(\alpha,n,k)$-non-expansive, $\mathsf{a} (\pi_0, \mathsf{li}_{A\to C} (\tau))\sim^s_i\mathsf{a} (\sigma_0, \mathsf{li}_{A\to C} (\eta))$. Analogously, $\mathsf{a} (\pi_1, \mathsf{ri}_{A\to C} (\tau))\sim^s_i\mathsf{a} (\sigma_1, \mathsf{ri}_{A\to C} (\eta))$. Hence, $\mathscr M (\mathsf a)(\pi,\tau) \sim^s_i \mathscr M (\mathsf a)(\sigma,\eta)$.

Suppose $\max \{\lVert \pi\rVert_s\oplus \lVert \tau\rVert_s, \lVert \sigma\rVert_s\oplus \lVert \eta\rVert_s\} = \alpha$ and $i\leqslant n$. We see that $\max\{\lVert \pi_0\rVert_s\oplus \lVert \mathsf{li}_{A\to C}(\tau)\rVert_s, \lVert \sigma _0\rVert_s\oplus \lVert \mathsf{li}_{A\to C}(\eta)\rVert_s \}\leqslant \alpha$. Also, $\pi_0 \sim^s_i \sigma_0$ and $\mathsf{li}_{A\to C} (\tau) \sim^s_i \mathsf{li}_{A\to C} (\eta)$. Since $\mathsf{a}$ is $(\alpha,n,k)$-non-expansive and $i\leqslant n$, $\mathsf{a} (\pi_0, \mathsf{li}_{A\to C} (\tau))\sim^s_i\mathsf{a} (\sigma_0, \mathsf{li}_{A\to C} (\eta))$. Analogously, $\mathsf{a} (\pi_1, \mathsf{ri}_{A\to C} (\tau))\sim^s_i\mathsf{a} (\sigma_1, \mathsf{ri}_{A\to C} (\eta))$. Therefore, $\mathscr M (\mathsf a)(\pi,\tau) \sim^s_i \mathscr M (\mathsf a)(\sigma,\eta)$.

Now suppose $\max \{\lVert \pi\rVert_s\oplus \lVert \tau\rVert_s, \lVert \sigma\rVert_s\oplus \lVert \eta\rVert_s\}=\alpha$, $i = n+1$ and $\max \{\lvert \pi \rvert+ \lvert \tau \rvert,\lvert \sigma\rvert + \lvert \eta\rvert\}<k+1$. We have $\max\{\lVert \pi_0\rVert_s\oplus \lVert \mathsf{li}_{A\to C}(\tau)\rVert_s, \lVert \sigma _0\rVert_s\oplus \lVert \mathsf{li}_{A\to C}(\eta)\rVert_s \}\leqslant \alpha$ and $\max \{\lvert \pi_0\rvert + \lvert \mathsf{li}_{A\to C}(\tau)\rvert, \lvert \sigma_0\rvert + \lvert \mathsf{li}_{A\to C}(\eta)\rvert\}<k$. Since $\mathsf{a}$ is $(\alpha,n,k)$-non-expansive and, in addition, $\pi_0 \sim^s_i \sigma_0$ and $\mathsf{li}_{A\to C} (\tau) \sim^s_i \mathsf{li}_{A\to C} (\eta)$, we have $\mathsf{a} (\pi_0, \mathsf{li}_{A\to C} (\tau))\sim^s_i\mathsf{a} (\sigma_0, \mathsf{li}_{A\to C} (\eta))$. Analogously, $\mathsf{a} (\pi_1, \mathsf{ri}_{A\to C} (\tau))\sim^s_i\mathsf{a} (\sigma_1, \mathsf{ri}_{A\to C} (\eta))$. Consequently, $\mathscr M (\mathsf a)(\pi,\tau) \sim^s_i \mathscr M (\mathsf a)(\sigma,\eta)$. 

The case has been checked. We see that 
\[\mathscr{M}(\mathscr{NE}_{\alpha, n, k}\cap\mathscr{RE}_{p}) \subset \mathscr{NE}_{\alpha, n, k+1}.\] 

We claim that 
\[ \mathscr{M}(\mathscr{NP}_{\alpha, n, k} \cap\mathscr{RE}_{p}) \subset \mathscr{NP}_{\alpha, n, k+1}.\]
Assume we have a $p$-removing $(\alpha, n, k)$-non-pollutive operation $\mathsf{a}$. For a pair of $\infty$-proofs $(\pi,\tau)\in \mathsf{CF}_i(s)\times \mathsf{CF}_i(s)$, we shall check that
\begin{itemize}
\item $\mathscr{M}(\mathsf{a})(\pi,\tau) \in \mathsf{CF}_i(s)$ whenever $\lVert \pi\rVert_s \oplus \lVert \tau\rVert_s  < \alpha$;
\item $\mathscr{M}(\mathsf{a})(\pi,\tau) \in \mathsf{CF}_i(s)$ whenever $\lVert \pi\rVert_s \oplus \lVert \tau\rVert_s  = \alpha$ and $i \leqslant n$;
\item $\mathscr{M}(\mathsf{a})(\pi,\tau) \in \mathsf{CF}_i(s)$ whenever $\lVert \pi\rVert_s \oplus \lVert \tau\rVert_s  = \alpha$, $i = n+1$ and $\lvert \pi\rvert + \lvert \tau\rvert <k+1$.
\end{itemize}

The argument as a whole is very close to what it was before. To complete the picture, we consider the case when $(\pi,\tau)$ is a cut pair with the cut formula $p$ and the $\infty$-proof $\pi $ has the form of (\ref{form2}). By the definition of $\mathscr{M}$, the $\infty$-proof $\mathscr{M}(\mathsf{a})(\pi,\tau)$ has the form of (\ref{form3}), where $\mathsf{R}=\mathsf{a}$. 

If $\lVert \pi\rVert_s\oplus \lVert \tau\rVert_s<\alpha$, then $\lVert \pi_0\rVert_s\oplus \lVert \mathsf{li}_{A\to C}(\tau)\rVert_s  <\alpha$ and $\lVert \pi_1\rVert_s\oplus \lVert \mathsf{ri}_{A\to C}(\tau)\rVert_s <\alpha$. Since $\mathsf{li}_{A\to C} $ is non-pollutive and $\mathsf{a}$ is $(\alpha,n,k)$-non-pollutive, $\mathsf{li}_{A\to C} (\tau)\in \mathsf{CF}_i(s)$ and $\mathsf{a} (\pi_0, \mathsf{li}_{A\to C} (\tau))\in \mathsf{CF}_i(s)$. Analogously, $\mathsf{a} (\pi_1, \mathsf{ri}_{A\to C} (\tau))\in \mathsf{CF}_i(s)$. Hence, $\mathscr M (\mathsf a)(\pi,\tau) \in \mathsf{CF}_i(s)$.

Suppose $\lVert \pi\rVert_s\oplus \lVert \tau\rVert_s= \alpha$ and $i\leqslant n$. We see that $\lVert \pi_0\rVert_s\oplus \lVert \mathsf{li}_{A\to C}(\tau)\rVert_s \leqslant\alpha$ and $\lVert \pi_1\rVert_s\oplus \lVert \mathsf{ri}_{A\to C}(\tau)\rVert_s \leqslant\alpha$. In addition, $\mathsf{li}_{A\to C} (\tau)\in \mathsf{CF}_i(s)$, because $\mathsf{li}_{A\to C} $ is non-pollutive. Since $\mathsf{a}$ is $(\alpha,n,k)$-non-pollutive and $i\leqslant n$, $\mathsf{a} (\pi_0, \mathsf{li}_{A\to C} (\tau))\in \mathsf{CF}_i(s)$. Analogously, $\mathsf{a} (\pi_1, \mathsf{ri}_{A\to C} (\tau))\in \mathsf{CF}_i(s)$. Therefore, $\mathscr M (\mathsf a)(\pi,\tau) \in \mathsf{CF}_i(s)$.

Suppose $\lVert \pi\rVert_s\oplus \lVert \tau\rVert_s=\alpha$, $i=n+1$ and $\lvert \pi\rvert + \lvert \tau\rvert<k+1$. We have $\lVert \pi_0\rVert_s\oplus \lVert \mathsf{li}_{A\to C}(\tau)\rVert_s \leqslant\alpha$, $\lVert \pi_1\rVert_s\oplus \lVert \mathsf{ri}_{A\to C}(\tau)\rVert_s \leqslant\alpha$, $\lvert \pi_0\rvert + \lvert \mathsf{li}_{A\to C}(\tau)\rvert<k$ and $\lvert \pi_1\rvert + \lvert \mathsf{ri}_{A\to C}(\tau)\rvert<k$. In addition, $\mathsf{li}_{A\to C} (\tau)\in \mathsf{CF}_i(s)$, because $\mathsf{li}_{A\to C} $ is non-pollutive. Since $\mathsf{a}$ is $(\alpha,n,k)$-non-pollutive, $i= n+1$ and $\lvert \pi_0\rvert + \lvert \mathsf{li}_{A\to C}(\tau)\rvert<k$, $\mathsf{a} (\pi_0, \mathsf{li}_{A\to C} (\tau))\in \mathsf{CF}_i(s)$. Analogously, $\mathsf{a} (\pi_1, \mathsf{ri}_{A\to C} (\tau))\in \mathsf{CF}_i(s)$. Hence, $\mathscr M (\mathsf a)(\pi,\tau) \in \mathsf{CF}_i(s)$.

The case has been checked. We obtain that 
\[ \mathscr{M}(\mathscr{NP}_{\alpha, n, k} \cap\mathscr{RE}_{p}) \subset \mathscr{NP}_{\alpha, n, k+1}.\]

We leave it to the reader to check that
\[ \mathscr{M}(\mathscr{NF}_{\alpha, n, k} \cap\mathscr{RE}_{p}) \subset \mathscr{NF}_{\alpha, n, k+1}.\]

Now, applying Theorem \ref{explicit fixed-point3}, we define $\mathsf{re}_{p}$ as the fixed-point of $\mathscr{M}$. From Theorem \ref{explicit fixed-point3}, the operation $\mathsf{re}_{p}$ is non-expansive, non-pollutive and non-fattening.

\end{proof}

\begin{lemma}\label{reboxadeq}
Suppose there exists a non-expansive, non-pollutive and non-fattening operation $\mathsf{re}_B\in \mathscr{RE}_B$. Then there exists a non-expansive, non-pollutive and non-fattening operation $\mathsf{re}_{\Box B}\in \mathscr{RE}_{\Box B}$.
\end{lemma}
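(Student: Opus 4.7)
The strategy is to define $\mathsf{re}_{\Box B}$ as the unique fixed-point of a contractive mapping $\mathscr{M} \colon \mathscr{RE}_{\Box B} \to \mathscr{RE}_{\Box B}$ and then invoke Theorem \ref{explicit fixed-point3}, exactly as in Lemma \ref{repadeq}. Given $\mathsf{a} \in \mathscr{RE}_{\Box B}$ and a cut pair $(\pi,\tau)$ with cut formula $\Box B$, we set $\mathscr{M}(\mathsf{a})(\pi,\tau)$ by case analysis on the last inference of $\pi$. When $\pi$ ends with $(\to_L)$, $(\to_R)$, or $(\mathsf{cut})$, the definition is precisely the one from Figure 3 with $Q=\Box B$ and $\mathsf{R}=\mathsf{a}$, permuting the cut upward through $\pi$. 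When $\pi$ ends with a modal rule whose principal formula is \emph{different} from $\Box B$, the cut formula $\Box B$ lies inside the inert side-multiset $\Psi$ of the conclusion, so we simply strip it, again as in Figure 3. The initial-sequent case follows the template of Lemma \ref{repadeq} as well, using $\mathsf{acl}_{\Box B}$ from an obvious analogue of Lemma \ref{weakcontraction} for compound atoms, or more directly by noting that, since the cut formula is not atomic, $\pi$ being an initial sequent already forces $\Sigma;\Gamma\Rightarrow\Delta$ to be initial.

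The genuinely new situation, absent in the atomic case, arises when $\pi$ ends with an application of $(\Box)$ whose principal formula is the cut formula itself:
\[
\AXC{$\pi_0$}\noLine\UIC{$\vdots$}\noLine\UIC{$\Sigma;\Sigma_0,\Lambda,\Pi,\Box^+\Pi \Rightarrow B$}\LeftLabel{$\Box$}\UIC{$\Sigma;\Phi,\Box\Lambda,\Box^+\Pi \Rightarrow \Box B,\Psi$}\DisplayProof
\]
Stripping $\Box B$ is not available here, so we must look into $\tau$. If $\tau$ ends with a propositional rule, a cut, an initial sequent, or a modal rule in which $\Box B$ is inherited only through the context multiset $\Phi^\tau$ of the last modal inference (and so is never broken down), we handle these subcases symmetrically to Figure 3, returning either a direct construction or recursing via $\mathsf{a}$ on the premises of $\tau$. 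The key principal subcase is when $\tau$ ends in $(\Box)$ or $(\Box^+)$ with $\Box B$ absorbed as a side $\Box$-formula, i.e., $B \in \Lambda^\tau$. Then the premise of $\tau$ has the shape $\Sigma;\Sigma_0^\tau, B, \Lambda_0^\tau, \Pi^\tau, \Box^+\Pi^\tau \Rightarrow A^\tau$. We apply $\mathsf{wk}_{\Phi';\emptyset}$ from Lemma \ref{strongweakening} to $\pi_0$ and to that premise of $\tau$ so that both come to share the common antecedent $\Sigma_0\cup\Sigma_0^\tau, \Lambda\cup\Lambda_0^\tau, \Pi\cup\Pi^\tau, \Box^+(\Pi\cup\Pi^\tau)$, feed the two weakened proofs to the assumed $\mathsf{re}_B$ to obtain a proof of $\ldots \Rightarrow A^\tau$, and finally reapply the outer modal rule of $\tau$ to produce a proof of the cut result $\Sigma;\Phi,\Box\Lambda,\Box^+\Pi\Rightarrow\Psi$.

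What remains is the verification that $\mathscr{M}$ is contractive and that $\mathscr{M}(\mathscr{X}_{\alpha,n,k}\cap\mathscr{RE}_{\Box B}) \subset \mathscr{X}_{\alpha,n,k+1}$ for $\mathscr{X}\in\{\mathscr{NE},\mathscr{NP},\mathscr{NF}\}$, after which Theorem \ref{explicit fixed-point3} delivers the required $\mathsf{re}_{\Box B}$. In the cases that copy Figure 3, the bookkeeping with $\oplus$-heights, local heights, and the strong-admissibility bounds of Lemmas \ref{strongweakening}--\ref{inversion} is identical to the corresponding part of the proof of Lemma \ref{repadeq} and I would simply follow that template verbatim. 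The main obstacle, and the one that really needs attention, is the new principal subcase: there $\mathscr{M}(\mathsf{a})(\pi,\tau)$ does not refer to $\mathsf{a}$ at all, so contractivity is automatic, but one must carefully check that post-composing weakenings and a modal rule around an application of the given $\mathsf{re}_B$ preserves non-expansiveness, non-pollution, and non-fattening at the required index. This uses in an essential way both the hypothesis that $\mathsf{re}_B$ itself has all three properties and the fact that the weakening operations from Lemma \ref{strongweakening} are non-expansive, non-pollutive, and non-fattening and do not increase local or $s$-global heights, so the subtree discipline governing $\sim^s_n$ on the $n$-fragments of the reconstituted modal proof is respected.
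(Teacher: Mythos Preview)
Your proposal is correct and follows essentially the same approach as the paper: define $\mathscr{M}$ by the Figure~3 reductions when $\Box B$ is non-principal in $\pi$, by the symmetric (Figure~4) reductions through $\tau$ when $\Box B$ is principal in $\pi$ but not a side formula of $\tau$'s last modal inference, and by invoking the given $\mathsf{re}_B$ on weakened premises in the doubly principal case; then apply Theorem~\ref{explicit fixed-point3}. Your observation that $\mathsf{a}$ does not appear in the principal subcase, so contractivity is trivial there, is exactly right; the only small omission is that when $\tau$ ends in $(\Box^+)$ you need \emph{two} applications of $\mathsf{re}_B$ (one for each premise, with $\pi_0$ suitably weakened into each), and the non-expansiveness check there hinges on the annotation shift $s=A$ versus $s\neq A$, but this is the routine case you flag as needing the hypothesis on $\mathsf{re}_B$.
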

\begin{proof}


Assume we have a non-expansive, non-pollutive and non-fattening operation $\mathsf{re}_B\in \mathscr{RE}_B$. We define the required operation $\mathsf{re}_{\Box B}\in \mathscr{RE}_{\Box B}$ as a fixed-point of a contractive mapping $\mathscr{M}\colon\mathscr{RE}_{\Box B}\to\mathscr{RE}_{\Box B}$ in the same way as in the proof of the previous lemma.

For an operation $\mathsf a\in \mathscr{RE}_{\Box B}$, we define $\mathscr{M} (\mathsf a)\in \mathscr{RE}_{\Box B}$ as follows. If a pair of $\infty$-proofs $(\pi,\tau)$ is not a cut pair or is a cut pair with the cut formula different from $\Box B$, then $\mathscr{M} (\mathsf a) (\pi,\tau)\coloneq\pi$. Otherwise, $\mathscr{M} (\mathsf a) (\pi,\tau)$ is defined according to the following cases, where $\Sigma;\Gamma\Rightarrow \Delta$ is the cut result of the pair $(\pi,\tau)$.

Case 1. Suppose $ \pi$ or $ \tau$ consists only of one initial sequent. Then $\Sigma;\Gamma\Rightarrow \Delta$ is also an initial sequent. We define $\mathscr M (\mathsf a)(\pi,\tau)$ as the $\infty$-proof consisting of the sequent $\Sigma;\Gamma\Rightarrow \Delta$. 

Case 2. If $ \pi$ and $ \tau$ consist of more than one sequent and $\Box B$ is not the principle formula of the last application of an inference rule in $\pi$, then $\mathscr M (\mathsf a)(\pi,\tau)$ is defined as shown in Fig. 3, where $Q=\Box B$, $\mathsf{R}=\mathsf a$ and $ \mathsf{wk}_{\Phi; \Psi}$, $\mathsf{li}_{A\to C}$, $\mathsf{ri}_{A\to C}$, $\mathsf{i}_{A\to C}$ are operations from Lemma \ref{strongweakening} and Lemma \ref{inversion}.

Case 3. Suppose that $ \pi$ and $ \tau$ consist of more than one sequent and $\Box B$ is the principle formula of the last inference of $\pi$. In addition, suppose $\Box B$ is not a side formula of the last inference of $\tau$ (if this inference is an application of a modal rule). In this case, the value $\mathscr M (\mathsf a)(\pi,\tau)$ is defined as shown in Fig. 4, where $Q=\Box B$ and $\mathsf{R}= \mathsf a$.

\begin{center}
\begin{sidewaystable}
\begin{adjustwidth}{0cm}{-2.7cm}
\begin{minipage}{\textwidth}
\medskip
\begin{align*}
\left(
\pi \;,
\AXC{$\tau_0$}
\noLine
\UIC{$\vdots$}
\noLine
\UIC{$\Sigma; \Sigma_0,\Lambda, \Pi, \Box^+ \Pi \Rightarrow  A$}
\LeftLabel{$\Box$}
\UIC{$\Sigma;Q, \Phi ,\Box \Lambda, \Box^+ \Pi \Rightarrow \Box A,  \Psi$}
\DisplayProof
\right)
&\longmapsto
\AXC{$\tau_0$}
\noLine
\UIC{$\vdots$}
\noLine
\UIC{$\Sigma;\Sigma_0, \Lambda, \Pi, \Box^+ \Pi \Rightarrow  A$}
\LeftLabel{$\Box$}
\RightLabel{ ,}
\UIC{$\Sigma;\Phi ,\Box \Lambda, \Box^+ \Pi \Rightarrow \Box A,  \Psi$}
\DisplayProof \\\\
\left(
\pi \; ,
\AXC{$\tau_0$}
\noLine
\UIC{$\vdots$}
\noLine
\UIC{$\Sigma;\Sigma_0, \Lambda, \Pi, \Box^+ \Pi \Rightarrow  A$}
\AXC{$\tau_1$}
\noLine
\UIC{$\vdots$}
\noLine
\UIC{$\Sigma;\Sigma_0, \Lambda, \Pi, \Box^+ \Pi \Rightarrow \Box^+ A$}
\LeftLabel{$\Box^+$}
\BIC{$\Sigma;Q,\Phi ,\Box \Lambda, \Box^+ \Pi \Rightarrow \Box^+ A,  \Psi$}
\DisplayProof
\right)
&\longmapsto
\AXC{$\tau_0$}
\noLine
\UIC{$\vdots$}
\noLine
\UIC{$\Sigma;\Sigma_0, \Lambda, \Pi, \Box^+ \Pi \Rightarrow  A$}
\AXC{$\tau_1$}
\noLine
\UIC{$\vdots$}
\noLine
\UIC{$\Sigma;\Sigma_0, \Lambda, \Pi, \Box^+ \Pi \Rightarrow \Box^+ A$}
\LeftLabel{$\Box^+$}
\RightLabel{ ,}
\BIC{$\Sigma;\Phi ,\Box \Lambda, \Box^+ \Pi \Rightarrow \Box^+ A,  \Psi$}
\DisplayProof \\\\
\left(
\pi \; ,
\AXC{$\tau_0$}
\noLine
\UIC{$\vdots$}
\noLine
\UIC{$\Sigma;Q,\Gamma,A\Rightarrow C,\Lambda$}
\LeftLabel{$\mathsf{\to_R}$}
\UIC{$\Sigma;Q, \Gamma\Rightarrow A\to C,\Lambda$}
\DisplayProof
\right)
&\longmapsto
\AXC{$\mathsf{R}(\mathsf{i}_{A \to C}(\pi), \tau_0)$}
\noLine
\UIC{$\vdots$}
\noLine
\UIC{$\Sigma;\Gamma,A\Rightarrow C,\Lambda$}
\LeftLabel{$\mathsf{\to_R}$}
\RightLabel{ ,}
\UIC{$\Sigma;\Gamma\Rightarrow A\to C,\Lambda$}
\DisplayProof \\\\
\left(
\pi \; ,
\AXC{$\tau_0$}
\noLine
\UIC{$\vdots$}
\noLine
\UIC{$\Sigma;Q, \Lambda, C\Rightarrow \Delta$}
\AXC{$\tau_1$}
\noLine
\UIC{$\vdots$}
\noLine
\UIC{$\Sigma;Q,\Lambda \Rightarrow A,\Delta$}
\LeftLabel{$\mathsf{\to_L}$}
\BIC{$\Sigma;Q, \Lambda, A\to C\Rightarrow \Delta$}
\DisplayProof
\right)
&\longmapsto
\AXC{$\mathsf{R} ( \mathsf{li}_{A\to C} (\pi),\tau_0)$}
\noLine
\UIC{$\vdots$}
\noLine
\UIC{$\Sigma;\Lambda, C\Rightarrow \Delta$}
\AXC{$\mathsf{R} ( \mathsf{ri}_{A\to C} (\pi), \tau_1)$}
\noLine
\UIC{$\vdots$}
\noLine
\UIC{$\Sigma;\Lambda \Rightarrow A,\Delta$}
\LeftLabel{$\mathsf{\to_L}$}
\RightLabel{ ,}
\BIC{$\Sigma;\Lambda, A\to C\Rightarrow \Delta$}
\DisplayProof \\\\
\left(
\pi , \;
\AXC{$\tau_0$}
\noLine
\UIC{$\vdots$}
\noLine
\UIC{$\Sigma;Q,\Gamma\Rightarrow \Delta, A$}
\AXC{$\tau_1$}
\noLine
\UIC{$\vdots$}
\noLine
\UIC{$\Sigma;Q, A, \Gamma \Rightarrow \Delta$}
\LeftLabel{$\mathsf{cut}$}
\BIC{$\Sigma;Q, \Gamma\Rightarrow \Delta$}
\DisplayProof
\right)
&\longmapsto
\AXC{$\mathsf{R} (\mathsf{wk}_{\emptyset;A} (\pi), \tau_0)$}
\noLine
\UIC{$\vdots$}
\noLine
\UIC{$\Sigma;\Gamma\Rightarrow \Delta, A$}
\AXC{$\mathsf{R} ( \mathsf{wk}_{A;\emptyset} (\pi), \tau_1)$}
\noLine
\UIC{$\vdots$}
\noLine
\UIC{$\Sigma;A, \Gamma\Rightarrow \Delta$}
\LeftLabel{$\mathsf{cut}$}
\RightLabel{ .}
\BIC{$\Sigma;\Gamma\Rightarrow \Delta$}
\DisplayProof 
\end{align*}
\medskip 
\end{minipage}
\\\\
\center{\textbf{Fig. 4}}
\end{adjustwidth}
\end{sidewaystable}
\end{center}

Case 4. Suppose $\pi$ and $\tau$ have the following forms: 
\begin{gather*}
\AXC{$\pi_0$}
\noLine
\UIC{$\vdots$}
\noLine
\UIC{$\Sigma; \Sigma_0, \Lambda, \Pi, \Box^+\Pi \Rightarrow B$}
\LeftLabel{$\Box$}
\RightLabel{ ,}
\UIC{$\Sigma;\Phi, \Box \Lambda, \Box^+ \Pi \Rightarrow \Box B,  \Delta$}
\DisplayProof \qquad
\AXC{$\tau_0$}
\noLine
\UIC{$\vdots$}
\noLine
\UIC{$\Sigma; \Sigma^\prime_0,B, \Lambda^\prime, \Pi^\prime, \Box^+ \Pi^\prime \Rightarrow  A$}
\LeftLabel{$\Box$}
\RightLabel{ .}
\UIC{$\Sigma;\Phi^\prime ,\Box B,\Box \Lambda^\prime, \Box^+ \Pi^\prime \Rightarrow \Box A,  \Delta^\prime$}
\DisplayProof
\end{gather*}
Then $\mathscr{M}(\mathsf{a}) (\pi, \tau)$ is defined as
\[\AXC{$\mathsf{re}_B (\pi^\prime_0,\tau^\prime_0)$}
\noLine
\UIC{$\vdots$}
\noLine
\UIC{$\Sigma; \Sigma_0\setminus \Sigma^\prime_0,\Sigma^\prime_0, \Lambda \setminus \Lambda^\prime,  \Lambda^\prime, \Pi \setminus \Pi^\prime, \Pi^\prime, \Box^+ (\Pi \setminus \Pi^\prime), \Box^+ \Pi^\prime \Rightarrow  A$}
\LeftLabel{$\Box$}
\RightLabel{ ,}
\UIC{$\Sigma;\Phi ,\Box \Lambda, \Box^+ \Pi \Rightarrow \Box A,  \Delta^\prime$}
\DisplayProof\]
where $\pi^\prime_0=\mathsf{wk}_{\Sigma^\prime_0\setminus \Sigma_0,\Lambda^\prime \setminus \Lambda, \Pi^\prime \setminus \Pi, \Box^+ (\Pi^\prime \setminus \Pi) ;A}(\pi_0)$ and $\tau^\prime_0=\mathsf{wk}_{\Sigma_0\setminus \Sigma^\prime_0,\Lambda \setminus \Lambda^\prime, \Pi \setminus \Pi^\prime, \Box^+ (\Pi \setminus \Pi^\prime);\emptyset}(\tau_0)$.

Case 5. Suppose $\pi$ and $\tau$ have the forms: 
\begin{gather}\label{form4}
\AXC{$\pi_0$}
\noLine
\UIC{$\vdots$}
\noLine
\UIC{$\Sigma; \Sigma_0, \Lambda, \Pi, \Box^+\Pi \Rightarrow B$}
\LeftLabel{$\Box$}
\RightLabel{ ,}
\UIC{$\Sigma;\Phi, \Box \Lambda, \Box^+ \Pi \Rightarrow \Box B,  \Delta$}
\DisplayProof 
\end{gather}
\begin{gather}\label{form5}
\AXC{$\tau_0$}
\noLine
\UIC{$\vdots$}
\noLine
\UIC{$\Sigma; \Sigma^\prime_0,B, \Lambda^\prime, \Pi^\prime, \Box^+ \Pi^\prime \Rightarrow  A$}
\AXC{$\tau_1$}
\noLine
\UIC{$\vdots$}
\noLine
\UIC{$\Sigma; \Sigma^\prime_0, B, \Lambda^\prime, \Pi^\prime, \Box^+ \Pi^\prime \Rightarrow \Box^+ A$}
\LeftLabel{$\Box^+$}
\RightLabel{ .}
\BIC{$\Sigma;\Phi^\prime ,\Box B, \Box \Lambda^\prime, \Box^+ \Pi^\prime \Rightarrow \Box^+ A,  \Delta^\prime$}
\DisplayProof
\end{gather}
Then $\mathscr{M}(\mathsf{a}) (\pi, \tau)$ is defined as
\begin{gather}\label{form6}
\AXC{$\xi_0$}
\noLine
\UIC{$\vdots$}
\noLine
\UIC{$\Sigma; \Theta\Rightarrow  A$}
\AXC{$\xi_1$}
\noLine
\UIC{$\vdots$}
\noLine
\UIC{$\Sigma; \Theta\Rightarrow \Box^+ A$}
\LeftLabel{$\Box^+$}
\RightLabel{ ,}
\BIC{$\Sigma;\Phi^\prime , \Box \Lambda^\prime, \Box^+ \Pi^\prime \Rightarrow \Box^+ A,  \Delta^\prime$}
\DisplayProof
\end{gather}
where $\Theta =  \Sigma_0\setminus \Sigma^\prime_0,\Sigma^\prime_0,\Lambda \setminus \Lambda^\prime,  \Lambda^\prime, \Pi \setminus \Pi^\prime, \Pi^\prime, \Box^+ (\Pi \setminus \Pi^\prime), \Box^+ \Pi^\prime$, $\xi_0 =\mathsf{re}_B (\pi^\prime_0,\tau^\prime_0)$ and $\xi_1=\mathsf{re}_B (\pi^{\prime\prime}_0,\tau^\prime_1)$. Besides, $\pi^\prime_0=\mathsf{wk}_{\Sigma^\prime_0\setminus \Sigma_0, \Lambda^\prime \setminus \Lambda, \Pi^\prime \setminus \Pi, \Box^+ (\Pi^\prime \setminus \Pi) ; A}(\pi_0)$, $\tau^\prime_0=\mathsf{wk}_{\Sigma_0\setminus \Sigma^\prime_0,\Lambda \setminus \Lambda^\prime, \Pi \setminus \Pi^\prime, \Box^+ (\Pi \setminus \Pi^\prime);\emptyset}(\tau_0)$, $\pi^{\prime\prime}_0= \mathsf{wk}_{\Sigma^\prime_0\setminus \Sigma_0,\Lambda^\prime \setminus \Lambda, \Pi^\prime \setminus \Pi, \Box^+ (\Pi^\prime \setminus \Pi) ;\Box^+ A}(\pi_0)$ and $\tau^\prime_1=\mathsf{wk}_{\Sigma_0\setminus \Sigma^\prime_0,\Lambda \setminus \Lambda^\prime, \Pi \setminus \Pi^\prime, \Box^+ (\Pi \setminus \Pi^\prime);\emptyset}(\tau_1)$.

The mapping $\mathscr{M}$ is well defined. It can be shown that $\mathscr M\colon \mathscr{RE}_{\Box B} \to \mathscr{RE}_{\Box B}$ is contractive in the same way as in the proof of Lemma \ref{repadeq}. We claim that 
\[\mathscr{M}(\mathscr{NE}_{\alpha, n, k}\cap\mathscr{RE}_{\Box B}) \subset \mathscr{NE}_{\alpha, n, k+1}.\] 
For a $\Box B$-removing $(\alpha,n,k)$-non-expansive operation $\mathsf a$ and two pairs of $\infty$-proofs $\pi\sim^s_i \sigma $ and $\tau\sim^s_i \eta $, we shall prove that
\begin{itemize}
\item $\mathscr M (\mathsf a)(\pi,\tau) \sim^s_i \mathscr M (\mathsf a)(\sigma,\eta)$ whenever $\max \{\lVert \pi\rVert_s\oplus \lVert \tau\rVert_s, \lVert \sigma\rVert_s\oplus \lVert \eta\rVert_s\} < \alpha$;
\item $\mathscr M (\mathsf a)(\pi,\tau) \sim^s_i \mathscr M (\mathsf a)(\sigma,\eta)$ whenever $\max \{\lVert \pi\rVert_s\oplus \lVert \tau\rVert_s, \lVert \sigma\rVert_s\oplus \lVert \eta\rVert_s\} = \alpha$ and $i \leqslant n$;
\item $\mathscr M (\mathsf a)(\pi,\tau) \sim^s_i \mathscr M (\mathsf a)(\sigma,\eta)$ whenever $\max \{\lVert \pi\rVert_s\oplus \lVert \tau\rVert_s, \lVert \sigma\rVert_s\oplus \lVert \eta\rVert_s\}=\alpha$, $i = n+1$ and $\max \{\lvert \pi \rvert+ \lvert \tau \rvert,\lvert \sigma\rvert + \lvert \eta\rvert\}<k+1$.
\end{itemize}

We consider only the case when the $\infty$-proofs $\pi $ and $\tau$ have forms (\ref{form4}) and (\ref{form5}) respectively and $i>0$. Suppose $s\neq A$. Since $\pi\sim^s_i \sigma $, $\tau\sim^s_i \eta $ and $i>0$, we obtain $\pi=\sigma$ and $\tau=\eta$. Hence, $\mathscr{M}(\mathsf{a})(\pi,\tau)=\mathscr{M}(\mathsf{a})(\sigma,\eta)$. 
If $s= A$, then $\pi=\sigma$, $\tau_0=\eta_0$ and $\tau_1\sim^s_{i-1}\eta_1$. By the definition of $\mathscr{M}$, the $\infty$-proofs $\mathscr{M}(\mathsf{a})(\pi,\tau)$ and $\mathscr{M}(\mathsf{a})(\sigma,\eta)$ have the form of (\ref{form6}). Since $\mathsf{wk}_{\Sigma_0\setminus \Sigma^\prime_0,\Lambda \setminus \Lambda^\prime, \Pi \setminus \Pi^\prime, \Box^+ (\Pi \setminus \Pi^\prime);\emptyset}$ and $\mathsf{re}_B$ are non-expansive, we have 
\[\tau^\prime_1=\mathsf{wk}_{\Sigma_0\setminus \Sigma^\prime_0,\Lambda \setminus \Lambda^\prime, \Pi \setminus \Pi^\prime, \Box^+ (\Pi \setminus \Pi^\prime);\emptyset}(\tau_1)\sim^A_{i-1}\mathsf{wk}_{\Sigma_0\setminus \Sigma^\prime_0,\Lambda \setminus \Lambda^\prime, \Pi \setminus \Pi^\prime, \Box^+ (\Pi \setminus \Pi^\prime);\emptyset}(\eta_1)=\eta^\prime_1\]
and $\mathsf{re}_B (\pi^{\prime\prime}_0,\tau^\prime_1)= \mathsf{re}_B (\sigma^{\prime\prime}_0,\tau^\prime_1)\sim ^A_{i-1} \mathsf{re}_B (\sigma^{\prime\prime}_0,\eta^\prime_1)$. Besides, $\mathsf{re}_B (\pi^\prime_0,\tau^\prime_0)=\mathsf{re}_B (\sigma^\prime_0,\eta^\prime_0)$. It follows that $\mathscr{M}(\mathsf{a})(\pi,\tau\sim^s_i\mathscr{M}(\mathsf{a})(\sigma,\eta)$.

The case has been checked. We obtain that 
\[\mathscr{M}(\mathscr{NE}_{\alpha, n, k}\cap\mathscr{RE}_{\Box B}) \subset \mathscr{NE}_{\alpha, n, k+1}.\] 

Now we prove the assertion 
\[ \mathscr{M}(\mathscr{NP}_{\alpha, n, k} \cap\mathscr{RE}_{\Box B}) \subset \mathscr{NP}_{\alpha, n, k+1}.\]
Assume we have a $\Box B$-removing $(\alpha, n, k)$-non-pollutive operation $\mathsf{a}$. For a pair of $\infty$-proofs $(\pi,\tau)\in \mathsf{CF}_i(s)\times \mathsf{CF}_i(s)$, we shall check that
\begin{itemize}
\item $\mathscr{M}(\mathsf{a})(\pi,\tau) \in \mathsf{CF}_i(s)$ whenever $\lVert \pi\rVert_s \oplus \lVert \tau\rVert_s  < \alpha$;
\item $\mathscr{M}(\mathsf{a})(\pi,\tau) \in \mathsf{CF}_i(s)$ whenever $\lVert \pi\rVert_s \oplus \lVert \tau\rVert_s  = \alpha$ and $i \leqslant n$;
\item $\mathscr{M}(\mathsf{a})(\pi,\tau) \in \mathsf{CF}_i(s)$ whenever $\lVert \pi\rVert_s \oplus \lVert \tau\rVert_s  = \alpha$, $i = n+1$ and $\lvert \pi\rvert + \lvert \tau\rvert <k+1$.
\end{itemize}

Let us consider the case when $\pi $ and $\tau$ have forms (\ref{form4}) and (\ref{form5}) respectively and $i>0$. If $s\neq A$, then $\pi$ and $\tau$ do not contain applications of the rule ($\mathsf{cut}$). Since operations $\mathsf{wk}_{\Psi;\Psi^\prime}$ and $\mathsf{re}_B$ are non-pollutive, the $\infty$-proof $\mathscr{M}(\mathsf{a})(\pi,\tau)$ does not contain applications of the rule ($\mathsf{cut}$). If $s= A$, then  $\pi$ and $\tau_0$ do not contain applications of the rule ($\mathsf{cut}$). In addition, $\tau_1\in \mathsf{CF}_{i-1}(s)$. Notice that $\tau^\prime_1=\mathsf{wk}_{\Sigma_0\setminus \Sigma^\prime_0,\Lambda \setminus \Lambda^\prime, \Pi \setminus \Pi^\prime, \Box^+ (\Pi \setminus \Pi^\prime);\emptyset}(\tau_1) \in \mathsf{CF}_{i-1}(s)$ and $\mathsf{re}_B (\pi^{\prime\prime}_0,\tau^\prime_1)\in  \mathsf{CF}_{i-1}(s)$. Moreover, $\mathsf{re}_B (\pi^\prime_0,\tau^\prime_0)$ does not contain applications of the rule ($\mathsf{cut}$). Hence, $\mathscr{M}(\mathsf{a})(\pi,\tau)\in\mathsf{CF}_{i}(s)$.

All other cases are similar to the one considered above or to the cases considered in the proof of Lemma \ref{repadeq}. Therefore, we omit them. Now we see that \[ \mathscr{M}(\mathscr{NP}_{\alpha, n, k} \cap\mathscr{RE}_{\Box B}) \subset \mathscr{NP}_{\alpha, n, k+1}.\]

It remains to verify the assertion
\[ \mathscr{M}(\mathscr{NF}_{\alpha, n, k} \cap\mathscr{RE}_{p}) \subset \mathscr{NF}_{\alpha, n, k+1},\]
but we leave this to the reader.

Finally, we define $\mathsf{re}_{\Box B}$ as a fixed-point of the contractive mapping $\mathscr{M}$. By Theorem \ref{explicit fixed-point3}, the operation $\mathsf{re}_{\Box B}$ is non-expansive, non-pollutive and non-fattening.

\end{proof}

\begin{lemma}\label{rebox+adeq}
Suppose there exists a non-expansive, non-pollutive and non-fattening operation $\mathsf{re}_B\in \mathscr{RE}_B$. Then there exists a non-expansive, non-pollutive and non-fattening operation $\mathsf{re}_{\Box^+ B}\in \mathscr{RE}_{\Box^+ B}$.
\end{lemma}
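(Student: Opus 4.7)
The plan is to mirror the fixed-point construction used in Lemmas \ref{repadeq} and \ref{reboxadeq}, defining $\mathsf{re}_{\Box^+ B}$ as the unique fixed-point of a contractive mapping $\mathscr{M}\colon \mathscr{RE}_{\Box^+ B} \to \mathscr{RE}_{\Box^+ B}$ and then invoking Theorem \ref{explicit fixed-point3} to obtain non-expansiveness, non-pollutiveness, and non-fatteningness in one stroke. Given $\mathsf{a} \in \mathscr{RE}_{\Box^+ B}$, I would put $\mathscr{M}(\mathsf{a})(\pi,\tau) := \pi$ whenever $(\pi,\tau)$ is not a cut pair on $\Box^+ B$; if it is such a cut pair with one side a single initial sequent, I would return the corresponding initial sequent on the cut result; if $\Box^+ B$ is not the principal formula of the last rule of $\pi$, I would apply the permutation clauses of Fig. 3 with $Q = \Box^+ B$ and $\mathsf{R} = \mathsf{a}$; and if $\Box^+ B$ is principal in $\pi$ but does not occur in the side family $\Box^+ \Pi'$ of the last modal rule of $\tau$, I would use the clauses of Fig. 4.

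The genuinely new case is the one in which $\pi$ ends with a $(\Box^+)$-application whose principal formula is $\Box^+ B$ and $\tau$ ends with a modal rule, either $(\Box)$ or $(\Box^+)$, in which $\Box^+ B$ appears as a member of $\Box^+ \Pi'$. In that case $B \in \Pi'$, so the premises $\tau_0$ (and, if applicable, $\tau_1$) of $\tau$'s last rule already carry both $B$ and $\Box^+ B$ in their left contexts, and the premises $\pi_0,\pi_1$ of $\pi$'s last $(\Box^+)$ derive $B$ and $\Box^+ B$ in a matching boxed context. I would keep the same modal rule as the last inference of $\mathscr{M}(\mathsf{a})(\pi,\tau)$ and redefine each of its premises in two steps: first cut $\Box^+ B$ by calling $\mathsf{a}$ on a suitably weakened pair built from $\pi_1$ and the corresponding $\tau$-premise, then cut $B$ by calling $\mathsf{re}_B$ on a suitably weakened pair built from $\pi_0$ and the previous output. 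Alignment of contexts is handled by the operations $\mathsf{wk}_{\Phi;\Psi}$ of Lemma \ref{strongweakening}, exactly as in the closing case of Lemma \ref{reboxadeq}.

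The main obstacle is verifying contractivity, since the recursive appeal to $\mathsf{a}$ in the critical case is on a pair whose global and local heights are not a priori smaller than those of $(\pi,\tau)$. The resolution is structural: the root of $\mathscr{M}(\mathsf{a})(\pi,\tau)$ is always a fixed modal rule, so its $(n+1)$-fragment depends on $\mathsf{a}$ only through the $n$-fragments of the cut pairs supplied to $\mathsf{a}$ inside the premises. Combined with the Hessenberg arithmetic of $\lVert \cdot \rVert_s$, the bounds $\lVert \pi_i\rVert_s \leqslant \lVert \pi\rVert_s$ and $\lVert \tau_j\rVert_s \leqslant \lVert \tau\rVert_s$, and the non-expansiveness of $\mathsf{wk}_{\Phi;\Psi}$ and $\mathsf{re}_B$, this lifts $\mathsf{a}\sim_{\alpha,n,k}\mathsf{b}$ to $\mathscr{M}(\mathsf{a})\sim_{\alpha,n,k+1}\mathscr{M}(\mathsf{b})$. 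Once contractivity is established, the inclusions $\mathscr{M}(\mathscr{NE}_{\alpha,n,k}\cap\mathscr{RE}_{\Box^+ B}) \subset \mathscr{NE}_{\alpha,n,k+1}$, $\mathscr{M}(\mathscr{NP}_{\alpha,n,k}\cap\mathscr{RE}_{\Box^+ B}) \subset \mathscr{NP}_{\alpha,n,k+1}$, and $\mathscr{M}(\mathscr{NF}_{\alpha,n,k}\cap\mathscr{RE}_{\Box^+ B}) \subset \mathscr{NF}_{\alpha,n,k+1}$ follow by clause-by-clause inspection, using the hypothesis that $\mathsf{re}_B$ is non-expansive, non-pollutive, and non-fattening and the corresponding properties of the weakening and inversion operations from Lemmas \ref{strongweakening} and \ref{inversion}. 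Theorem \ref{explicit fixed-point3} then yields the desired $\mathsf{re}_{\Box^+ B}$ as the unique fixed-point of $\mathscr{M}$.
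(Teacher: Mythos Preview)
Your overall plan and the first several clauses coincide with the paper's construction, and your description of the principal--side case (reduce first the $\Box^+ B$-occurrence via $\mathsf a$, then the resulting $B$-occurrence via $\mathsf{re}_B$, after aligning contexts with $\mathsf{wk}$) is precisely the paper's Subcases~4B/5B. However, you omit the companion Subcases~4A/5A: when the succedent $\Delta$ of the cut result already contains a copy of $\Box^+ B$, the paper does \emph{not} perform the double reduction but simply reuses $\pi_0,\pi_1$ to conclude $\Sigma;\Phi,\Box\Lambda,\Box^+\Pi\Rightarrow \Box^+ B,\Delta_0$ directly. This distinction is not cosmetic; it is what makes the contractivity argument go through. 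If $\Box^+ B\in\Delta$, then $B\in\mathit{Ann}(\tau)$, so the annotation $s=B$ is admissible in the hypothesis $\lVert\pi\rVert_s\oplus\lVert\tau\rVert_s=\alpha$. In that situation the output's root $(\Box^+)$ has principal formula $\Box^+ A$ with $A\neq B$, hence its right premise carries annotation $A\neq s$ and the root's $\approx$-class in the output annotated by $B$ is a singleton. The $n$-fragment at annotation $B$ is then the \emph{entire} output, and to get $\mathscr{M}(\mathsf a)(\pi,\tau)\sim^B_n\mathscr{M}(\mathsf b)(\pi,\tau)$ for $n\geqslant 1$ you would need $\mathsf a(\pi''_1,\tau'_1)=\mathsf b(\pi''_1,\tau'_1)$ outright, i.e.\ $\lVert(\pi''_1,\tau'_1)\rVert<\alpha$. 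Your bounds only give $\leqslant\alpha$, and no choice of annotation on the inner pair repairs this.

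Relatedly, your stated mechanism for contractivity --- ``the root of $\mathscr{M}(\mathsf a)(\pi,\tau)$ is a fixed modal rule, so its $(n{+}1)$-fragment depends on $\mathsf a$ only through the $n$-fragments of the inner pairs'' --- is true only for the annotation $s=A$, because passing through the right premise of a $(\Box^+)$ decrements the fragment level only when the annotation matches the principal formula. The paper's argument is instead ordinal: Subcase~B guarantees $B\notin\mathit{Ann}(\tau)$, hence $s\neq B$, which yields the strict drop $\lVert\pi_1\rVert_B<\lVert\pi\rVert_s$; Lemma~\ref{annotation lemma} then allows the annotation change at cost $+1$, and that $+1$ is absorbed by the strict decrease on the $\tau$-side (left premise, or right premise with annotation $A\neq s$). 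Once you add the $\Box^+ B\in\Delta$ subcase and replace the fragment-level heuristic by this height calculation, your proof aligns with the paper's.
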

\begin{proof}
The plan of this proof is the same as that of the proofs of the previous two lemmas.
Assume we have a non-expansive, non-pollutive and non-fattening $B$-removing operation $\mathsf{re}_B\colon \mathsf{P}\times\mathsf{P}\to \mathsf{P}$. We introduce the required $\Box^+ B$-removing operation as a fixed-point of a contractive mapping $\mathscr{M}\colon\mathscr{RE}_{\Box^+ B}\to\mathscr{RE}_{\Box^+ B}$.

Given an operation $\mathsf a\in \mathscr{RE}_{\Box^+ B}$, we define $\mathscr M (\mathsf a)\in \mathscr{RE}_{\Box^+ B}$ in the following way. For any two $\infty$-proofs $\pi$ and $\tau$, if the pair $(\pi,\tau)$ is not a cut pair or is a cut pair with the cut formula being not $\Box^+ B$, then $\mathscr M(\mathsf a)(\pi,\tau):=\pi$. Now assume $(\pi,\tau)$ is a cut pair with the cut formula $\Box^+ B$ and the cut result $\Sigma;\Gamma\Rightarrow \Delta$. We define $\mathscr M (\mathsf a)(\pi,\tau)$ according to the following cases, the first three of which repeat the corresponding cases from the proof of Lemma \ref{reboxadeq}.

Case 1. Suppose $ \pi$ or $ \tau$ consists only of one initial sequent. Then $\Sigma;\Gamma\Rightarrow \Delta$ is also an initial sequent. We define $\mathscr M (\mathsf a)(\pi,\tau)$ as the $\infty$-proof consisting of the sequent $\Sigma;\Gamma\Rightarrow \Delta$. 

Case 2. If $ \pi$ and $ \tau$ consist of more than one sequent and $\Box^+ B$ is not the principle formula of the last application of an inference rule in $\pi$, then $\mathscr M (\mathsf a)(\pi,\tau)$ is defined as shown in Fig. 3, where $Q=\Box^+ B$, $\mathsf{R}=\mathsf a$ and $ \mathsf{wk}_{\Phi; \Psi}$, $\mathsf{li}_{A\to C}$, $\mathsf{ri}_{A\to C}$, $\mathsf{i}_{A\to C}$ are operations from Lemma \ref{strongweakening} and Lemma \ref{inversion}.


Case 3. Suppose that $ \pi$ and $ \tau$ consist of more than one sequent and $\Box^+ B$ is the principle formula of the last inference of $\pi$. In addition, suppose $\Box^+ B$ is not a side formula of the last inference of $\tau$ (if this inference is an application of a modal rule). In this case, $\mathscr M (\mathsf a)(\pi,\tau)$ is defined as shown in Fig. 4, where $Q=\Box^+ B$ and $\mathsf{R}= \mathsf a$. 

Case 4. Suppose $\pi$ and $\tau$ have the following forms: 
\begin{gather*}
\AXC{$\pi_0$}
\noLine
\UIC{$\vdots$}
\noLine
\UIC{$\Sigma; \Sigma_0, \Lambda, \Pi, \Box^+ \Pi \Rightarrow  B$}
\AXC{$\pi_1$}
\noLine
\UIC{$\vdots$}
\noLine
\UIC{$\Sigma; \Sigma_0,\Lambda, \Pi, \Box^+ \Pi \Rightarrow \Box^+ B$}
\LeftLabel{$\Box^+$}
\RightLabel{ ,}
\BIC{$\Sigma;\Phi, \Box \Lambda, \Box^+ \Pi \Rightarrow \Box^+ B,  \Delta$}
\DisplayProof \\\\
\AXC{$\tau_0$}
\noLine
\UIC{$\vdots$}
\noLine
\UIC{$\Sigma; \Sigma^\prime_0,B,\Box^+ B, \Lambda^\prime, \Pi^\prime, \Box^+ \Pi^\prime \Rightarrow  A$}
\LeftLabel{$\Box$}
\RightLabel{ .}
\UIC{$\Sigma;\Phi^\prime ,\Box^+ B,\Box \Lambda^\prime, \Box^+ \Pi^\prime \Rightarrow \Box A,  \Delta^\prime$}
\DisplayProof
\end{gather*}

Subcase 4A. If the multiset $\Delta$ contains a copy of the formula $\Box^+ B$ (i.e., $\Delta = \Box^+ B, \Delta_0$), then we erase this copy from $\Delta$ and define $\mathscr{M}(\mathsf{a}) (\pi, \tau)$ as
\[\AXC{$\pi_0$}
\noLine
\UIC{$\vdots$}
\noLine
\UIC{$ \Sigma;\Sigma_0, \Lambda, \Pi, \Box^+ \Pi \Rightarrow  B$}
\AXC{$\pi_1$}
\noLine
\UIC{$\vdots$}
\noLine
\UIC{$\Sigma; \Sigma_0, \Lambda, \Pi, \Box^+ \Pi \Rightarrow \Box^+ B$}
\LeftLabel{$\Box^+$}
\RightLabel{ .}
\BIC{$\Sigma;\Phi, \Box \Lambda, \Box^+ \Pi \Rightarrow \Box^+ B,  \Delta_0$}
\DisplayProof\]

Subcase 4B. If $\Box^+ B$ does not belong to $\Delta$, then $\mathscr{M}(\mathsf{a}) (\pi, \tau)$ is defined as
\[\AXC{$\mathsf{re}_B (\pi^\prime_0,\mathsf{a}(\pi^\prime_1,\tau^\prime_0))$}
\noLine
\UIC{$\vdots$}
\noLine
\UIC{$\Sigma; \Sigma_0\setminus \Sigma^\prime_0,\Sigma^\prime_0, \Lambda \setminus \Lambda^\prime,  \Lambda^\prime, \Pi \setminus \Pi^\prime, \Pi^\prime, \Box^+ (\Pi \setminus \Pi^\prime), \Box^+ \Pi^\prime \Rightarrow  A$}
\LeftLabel{$\Box$}
\RightLabel{ ,}
\UIC{$\Sigma;\Phi^\prime ,\Box \Lambda^\prime, \Box^+ \Pi^\prime \Rightarrow \Box A,  \Delta^\prime$}
\DisplayProof\]
where $\pi^\prime_0=\mathsf{wk}_{\Sigma^\prime_0\setminus \Sigma_0,\Lambda^\prime \setminus \Lambda, \Pi^\prime \setminus \Pi, \Box^+ (\Pi^\prime \setminus \Pi) ;A}(\pi_0)$, $\pi^\prime_1=\mathsf{wk}_{\Sigma^\prime_0\setminus \Sigma_0, B,\Lambda^\prime \setminus \Lambda, \Pi^\prime \setminus \Pi, \Box^+ (\Pi^\prime \setminus \Pi) ;A}(\pi_1)$ and $\tau^\prime_0=\mathsf{wk}_{\Sigma_0\setminus \Sigma^\prime_0,\Lambda \setminus \Lambda^\prime, \Pi \setminus \Pi^\prime, \Box^+ (\Pi \setminus \Pi^\prime);\emptyset}(\tau_0)$.

Case 5. Suppose $\pi$ and $\tau $ have the forms:
\begin{gather}\label{case5pi}
\AXC{$\pi_0$}
\noLine
\UIC{$\vdots$}
\noLine
\UIC{$\Sigma;\Sigma_0,  \Lambda, \Pi, \Box^+ \Pi \Rightarrow  B$}
\AXC{$\pi_1$}
\noLine
\UIC{$\vdots$}
\noLine
\UIC{$\Sigma;\Sigma_0, \Lambda, \Pi, \Box^+ \Pi \Rightarrow\Box^+ B$}
\LeftLabel{$\Box^+$}
\RightLabel{ ,}
\BIC{$\Sigma;\Phi, \Box \Lambda, \Box^+ \Pi \Rightarrow \Box^+ B,  \Delta$}
\DisplayProof 
\end{gather}
\begin{gather}\label{case5tau}
\AXC{$\tau_0$}
\noLine
\UIC{$\vdots$}
\noLine
\UIC{$\Sigma; \Sigma^\prime_0,B,\Box^+ B, \Lambda^\prime, \Pi^\prime, \Box^+ \Pi^\prime \Rightarrow  A$}
\AXC{$\tau_1$}
\noLine
\UIC{$\vdots$}
\noLine
\UIC{$\Sigma; \Sigma^\prime_0, B,\Box^+ B, \Lambda^\prime, \Pi^\prime, \Box^+ \Pi^\prime \Rightarrow \Box^+ A$}
\LeftLabel{$\Box^+$}
\RightLabel{ .}
\BIC{$\Sigma;\Phi^\prime ,\Box^+ B, \Box \Lambda^\prime, \Box^+ \Pi^\prime \Rightarrow \Box^+ A,  \Delta^\prime$}
\DisplayProof
\end{gather} 

Subcase 5A. Suppose $\Delta$ contains a copy of the formula $\Box^+ B$. Then we erase this copy and define $\mathscr{M}(\mathsf{a}) (\pi, \tau)$ as
\[\AXC{$\pi_0$}
\noLine
\UIC{$\vdots$}
\noLine
\UIC{$\Sigma;\Sigma_0,  \Lambda, \Pi, \Box^+ \Pi \Rightarrow  B$}
\AXC{$\pi_1$}
\noLine
\UIC{$\vdots$}
\noLine
\UIC{$\Sigma;\Sigma_0, \Lambda, \Pi, \Box^+ \Pi \Rightarrow \Box^+ B$}
\LeftLabel{$\Box^+$}
\RightLabel{ ,}
\BIC{$\Sigma;\Phi, \Box \Lambda, \Box^+ \Pi \Rightarrow \Box^+ B,  \Delta_0$}
\DisplayProof \]
where $\Delta_0=\Delta\setminus \{\Box^+ B\}$.

Subcase 5B. If $\Delta$ does not contain $\Box^+ B$, then we define $\mathscr{M}(\mathsf{a}) (\pi, \tau)$ as
\begin{gather}\label{form7}
\AXC{$\xi_0$}
\noLine
\UIC{$\vdots$}
\noLine
\UIC{$\Sigma; \Theta\Rightarrow  A$}
\AXC{$\xi_1$}
\noLine
\UIC{$\vdots$}
\noLine
\UIC{$\Sigma; \Theta\Rightarrow \Box^+ A$}
\LeftLabel{$\Box^+$}
\RightLabel{ ,}
\BIC{$\Sigma;\Phi^\prime , \Box \Lambda^\prime, \Box^+ \Pi^\prime \Rightarrow \Box^+ A,  \Delta^\prime$}
\DisplayProof
\end{gather}
where $\Theta =  \Sigma_0\setminus \Sigma^\prime_0,\Sigma^\prime_0,\Lambda \setminus \Lambda^\prime,  \Lambda^\prime, \Pi \setminus \Pi^\prime, \Pi^\prime, \Box^+ (\Pi \setminus \Pi^\prime), \Box^+ \Pi^\prime$, $\xi_0 =\mathsf{re}_B (\pi^\prime_0,\mathsf{a}(\pi^\prime_1,\tau^\prime_0))$ and $\xi_1=\mathsf{re}_B (\pi^{\prime\prime}_0,\mathsf{a}(\pi^{\prime\prime}_1,\tau^\prime_1))$. In addition, $\pi^\prime_0=\mathsf{wk}_{\Sigma^\prime_0\setminus \Sigma_0, \Lambda^\prime \setminus \Lambda, \Pi^\prime \setminus \Pi, \Box^+ (\Pi^\prime \setminus \Pi) ; A}(\pi_0)$, $\pi^\prime_1=\mathsf{wk}_{\Sigma^\prime_0\setminus \Sigma_0,B,\Lambda^\prime \setminus \Lambda, \Pi^\prime \setminus \Pi, \Box^+ (\Pi^\prime \setminus \Pi) ;A}( \pi_1)$, $\tau^\prime_0=\mathsf{wk}_{\Sigma_0\setminus \Sigma^\prime_0,\Lambda \setminus \Lambda^\prime, \Pi \setminus \Pi^\prime, \Box^+ (\Pi \setminus \Pi^\prime);\emptyset}(\tau_0)$, $\pi^{\prime\prime}_0= \mathsf{wk}_{\Sigma^\prime_0\setminus \Sigma_0,\Lambda^\prime \setminus \Lambda, \Pi^\prime \setminus \Pi, \Box^+ (\Pi^\prime \setminus \Pi) ;\Box^+ A}(\pi_0)$,  $\pi^{\prime\prime}_1=\mathsf{wk}_{\Sigma^\prime_0\setminus \Sigma_0,B,\Lambda^\prime \setminus \Lambda, \Pi^\prime \setminus \Pi, \Box^+ (\Pi^\prime \setminus \Pi) ;\Box^+ A}(\pi_1)$ and $\tau^\prime_1=\mathsf{wk}_{\Sigma_0\setminus \Sigma^\prime_0,\Lambda \setminus \Lambda^\prime, \Pi \setminus \Pi^\prime, \Box^+ (\Pi \setminus \Pi^\prime);\emptyset}(\tau_1)$.

The mapping $\mathscr M\colon \mathscr{RE}_{\Box^+ B} \to \mathscr{RE}_{\Box^+ B}$ is well defined. We claim that $\mathscr M$ is contractive. Given two $\Box^+ B$-removing operations $\mathsf a$ and $\mathsf b$ such that $\mathsf a\sim_{\alpha, n,k}\mathsf b$, we shall prove $\mathscr M (\mathsf a)\sim_{\alpha, n,k+1}\mathscr M(\mathsf b)$. In other words, for any pair of $\infty$-proofs $(\pi,\tau)$ and any  $s\in\mathit{Ann}(\pi)\cap \mathit{Ann}(\tau)$, we shall check 
\begin{itemize}
\item $\mathscr M(\mathsf a)(\pi, \tau) = \mathscr M(\mathsf b)(\pi, \tau)$ whenever $\lVert (\pi,\tau)\rVert  < \alpha$,
\item $\mathscr M(\mathsf a)(\pi, \tau)\sim^s_n\mathscr M(\mathsf b)(\pi, \tau)$ whenever $\lVert \pi\rVert_s \oplus \lVert \tau\rVert_s = \alpha$,
\item $\mathscr M(\mathsf a)(\pi, \tau)\sim^s_{n+1}\mathscr M(\mathsf b)(\pi, \tau)$ whenever $\lVert \pi\rVert_s \oplus \lVert \tau\rVert_s = \alpha$ and $\lvert\pi\rvert + \rvert \tau \rvert< k+1$.
\end{itemize}

By analyzing cases in accordance with the definition of $\mathscr M$, the aforementioned
conditions are easily checked. We consider only the main case when $\pi$ and $\tau$ have forms (\ref{case5pi}) and (\ref{case5tau}) respectively and $\Box^+ B\nin \Delta$. In this case, the $\infty$-proofs $\mathscr{M}(\mathsf{a})(\pi,\tau)$ and $\mathscr{M}(\mathsf{b})(\pi,\tau)$ have the form of (\ref{form7}), $\lvert\pi\rvert=\lvert\tau\rvert=0$ and $B\nin \mathit{Ann}(\tau)$.

Let us fix $s^\prime\in \mathit{Fm}_\circ $ such that $\lVert (\pi,\tau)\rVert=\lVert \pi\rVert_{s^\prime} \oplus \lVert \tau\rVert_{s^\prime}$. Since $B\nin \mathit{Ann}(\tau)$, we have $s^\prime\neq B$ and $\lVert \pi_1\rVert_B<\lVert \pi\rVert_{s^\prime}$. 
Besides, $\lVert \pi^{\prime}_1\rVert_B\leqslant\lVert \pi_1\rVert_B$ and $\lVert \pi^{\prime\prime}_1\rVert_B\leqslant\lVert \pi_1\rVert_B$ from Lemma \ref{strongweakening}. Applying Lemma \ref{annotation lemma}, we see that $\lVert \pi^{\prime}_1\rVert_\circ\leqslant\lVert \pi^{\prime}_1\rVert_B+1\leqslant \lVert \pi_1\rVert_B+1\leqslant \lVert \pi\rVert_{s^\prime}$ and $\lVert \pi^{\prime\prime}_1\rVert_A\leqslant\lVert \pi^{\prime\prime}_1\rVert_B+1\leqslant\lVert \pi_1\rVert_B+1 \leqslant\lVert \pi\rVert_{s^\prime}$. From Lemma \ref{strongweakening}, we have $\lVert \tau^\prime_0\rVert_\circ \leqslant\lVert \tau_0\rVert_\circ<\lVert \tau\rVert_{s^\prime}$ and $\lVert \tau^\prime_1\rVert_A\leqslant\lVert \tau_1\rVert_A\leqslant\lVert \tau\rVert_{s^\prime} $. Consequently, 
\begin{gather*}
\lVert (\pi^{\prime}_1,\tau^\prime_0)\rVert \leqslant \lVert \pi^{\prime}_1 \rVert_\circ\oplus\lVert \tau^\prime_0\rVert_\circ<\lVert \pi\rVert_{s^\prime} \oplus \lVert \tau\rVert_{s^\prime}=\lVert (\pi,\tau)\rVert,\\
\lVert (\pi^{\prime\prime}_1,\tau^\prime_1)\rVert \leqslant \lVert \pi^{\prime\prime}_1 \rVert_A\oplus\lVert \tau^\prime_1\rVert_A\leqslant\lVert \pi\rVert_{s^\prime} \oplus \lVert \tau\rVert_{s^\prime}=\lVert (\pi,\tau)\rVert.
\end{gather*}

If $\lVert (\pi,\tau)\rVert<\alpha$, then $\lVert (\pi^{\prime}_1,\tau^\prime_0)\rVert <\alpha$ and $\lVert (\pi^{\prime\prime}_1,\tau^\prime_1)\rVert  <\alpha$. Since $\mathsf a\sim_{\alpha, n,k}\mathsf b$, we obtain $\mathsf{a} (\pi^{\prime}_1,\tau^\prime_0)=\mathsf{b} (\pi^{\prime}_1,\tau^\prime_0)$ and $\mathsf{a} (\pi^{\prime\prime}_1,\tau^\prime_1)=\mathsf{b} (\pi^{\prime\prime}_1,\tau^\prime_1)$. Therefore, $\mathscr{M}(\mathsf{a})(\pi,\tau)=\mathscr{M}(\mathsf{b})(\pi,\tau)$.

Suppose $\lVert \pi\rVert_s\oplus \lVert \tau\rVert_s=\alpha$. We see that $\lVert (\pi^{\prime}_1,\tau^\prime_0)\rVert <\alpha$ and $\mathsf{a} (\pi^{\prime}_1,\tau^\prime_0)=\mathsf{b} (\pi^{\prime}_1,\tau^\prime_0)$, because $\mathsf a\sim_{\alpha, n,k}\mathsf b$. In addition,  $s\neq B$, $\lVert \pi^{\prime\prime}_1\rVert_A\leqslant\lVert \pi^{\prime\prime}_1\rVert_B+1\leqslant\lVert \pi_1\rVert_B+1 \leqslant\lVert \pi\rVert_{s}$ and $\lVert \tau^\prime_1\rVert_A\leqslant\lVert \tau_1\rVert_A\leqslant\lVert \tau\rVert_{s} $.
If $s\neq A$, then $\lVert \tau^\prime_1\rVert_A\leqslant\lVert \tau_1\rVert_A<\lVert \tau\rVert_{s} $ and 
\[\lVert (\pi^{\prime\prime}_1,\tau^\prime_1)\rVert  \leqslant  \lVert \pi^{\prime\prime}_1 \rVert_A\oplus\lVert \tau^\prime_1\rVert_A<\lVert \pi\rVert_s\oplus \lVert \tau\rVert_s= \alpha.\]
Since $\mathsf a\sim_{\alpha, n,k}\mathsf b$, we have $\mathsf{a} (\pi^{\prime\prime}_1,\tau^\prime_1)=\mathsf{b} (\pi^{\prime\prime}_1,\tau^\prime_1)$. Hence, $\mathscr{M}(\mathsf{a})(\pi,\tau)=\mathscr{M}(\mathsf{b})(\pi,\tau)$.
If $s=A$, then $\lVert (\pi^{\prime\prime}_1,\tau^\prime_1)\rVert_A  \leqslant \alpha$ and $\mathsf{a} (\pi^{\prime\prime}_1,\tau^\prime_1)\sim^A_n\mathsf{b} (\pi^{\prime\prime}_1,\tau^\prime_1)$. We recall that the operation $\mathsf{re}_B$ is non-expansive and obtain $\mathsf{re}_B (\pi^{\prime\prime}_0,\mathsf{a}(\pi^{\prime\prime}_1,\tau^\prime_1))\sim^A_n \mathsf{re}_B (\pi^{\prime\prime}_0,\mathsf{b}(\pi^{\prime\prime}_1,\tau^\prime_1)) $.
Consequently, $\mathscr{M}(\mathsf{a})(\pi,\tau)\sim^s_{n+1}\mathscr{M}(\mathsf{b})(\pi,\tau)$.

The case has been checked. We see that the mapping $\mathscr M\colon \mathscr{RE}_{\Box^+ B} \to \mathscr{RE}_{\Box^+ B}$ is contractive. Now we claim that 
\[\mathscr{M}(\mathscr{NE}_{\alpha, n, k}\cap\mathscr{RE}_{\Box^+ B}) \subset \mathscr{NE}_{\alpha, n, k+1}.\] 
For a $\Box^+ B$-removing $(\alpha,n,k)$-non-expansive operation $\mathsf a$ and two pairs of $\infty$-proofs $\pi\sim^s_i \sigma $ and $\tau\sim^s_i \eta $, we shall prove that
\begin{itemize}
\item $\mathscr M (\mathsf a)(\pi,\tau) \sim^s_i \mathscr M (\mathsf a)(\sigma,\eta)$ whenever $\max \{\lVert \pi\rVert_s\oplus \lVert \tau\rVert_s, \lVert \sigma\rVert_s\oplus \lVert \eta\rVert_s\} < \alpha$;
\item $\mathscr M (\mathsf a)(\pi,\tau) \sim^s_i \mathscr M (\mathsf a)(\sigma,\eta)$ whenever $\max \{\lVert \pi\rVert_s\oplus \lVert \tau\rVert_s, \lVert \sigma\rVert_s\oplus \lVert \eta\rVert_s\} = \alpha$ and $i \leqslant n$;
\item $\mathscr M (\mathsf a)(\pi,\tau) \sim^s_i \mathscr M (\mathsf a)(\sigma,\eta)$ whenever $\max \{\lVert \pi\rVert_s\oplus \lVert \tau\rVert_s, \lVert \sigma\rVert_s\oplus \lVert \eta\rVert_s\}=\alpha$, $i = n+1$ and $\max \{\lvert \pi \rvert+ \lvert \tau \rvert,\lvert \sigma\rvert + \lvert \eta\rvert\}<k+1$.
\end{itemize}

Let us consider only the main case when $\pi$ and $\tau$ have forms (\ref{case5pi}) and (\ref{case5tau}) respectively, $\Box^+ B\nin \Delta$ and $i>0$. We see that $s\neq B$. If $s\neq A$, then  $\pi=\sigma$ and $\tau=\eta$ since $\pi\sim^s_i \sigma $, $\tau\sim^s_i \eta $ and $i>0$. We obtain that $\mathscr{M}(\mathsf{a})(\pi,\tau)=\mathscr{M}(\mathsf{a})(\sigma,\eta)$ and the aforementioned conditions hold. Otherwise, $s= A$. In this case, $\pi=\sigma$, $\tau_0=\eta_0$ and $\tau_1\sim^s_{i-1}\eta_1$. By the definition of $\mathscr{M}$, the $\infty$-proofs $\mathscr{M}(\mathsf{a})(\pi,\tau)$ and $\mathscr{M}(\mathsf{a})(\sigma,\eta)$ have the form of (\ref{form7}). We see that $\pi^{\prime}_0=\sigma^{\prime}_0$, $\pi^{\prime}_1=\sigma^{\prime}_1$ and $\tau^\prime_0=\eta^\prime_0$. Thus, $\mathsf{a} (\pi^{\prime}_1,\tau^\prime_0)=\mathsf{a} (\sigma^{\prime}_1,\eta^\prime_0)$ and $\mathsf{re}_B (\pi^\prime_0,\mathsf{a}(\pi^\prime_1,\tau^\prime_0))= \mathsf{re}_B (\sigma^\prime_0,\mathsf{a}(\sigma^\prime_1,\eta^\prime_0)) $. In addition, $\pi^{\prime\prime}_1=\sigma^{\prime\prime}_1$ and $\tau^\prime_1 \sim^s_{i-1} \eta^\prime_1$, because the operation $\mathsf{wk}_{\Sigma_0\setminus \Sigma^\prime_0,\Lambda \setminus \Lambda^\prime, \Pi \setminus \Pi^\prime, \Box^+ (\Pi \setminus \Pi^\prime);\emptyset}$ is non-expansive. Notice that $\lVert \pi^{\prime\prime}_1\rVert_s\leqslant\lVert \pi^{\prime\prime}_1\rVert_B+1\leqslant\lVert \pi_1\rVert_B+1 \leqslant\lVert \pi\rVert_{s}$ and $\lVert \tau^\prime_1\rVert_s\leqslant\lVert \tau_1\rVert_s\leqslant\lVert \tau\rVert_{s} $. Therefore, $\lVert \pi^{\prime\prime}_1\rVert_s\oplus \lVert \tau^\prime_1\rVert_s \leqslant \lVert \pi\rVert_s\oplus \lVert \tau\rVert_s$. Analogously, $\lVert \sigma^{\prime\prime}_1\rVert_s\oplus \lVert \eta^\prime_1\rVert_s \leqslant \lVert \sigma\rVert_s\oplus \lVert \eta\rVert_s$.

Suppose $\max \{\lVert \pi\rVert_s\oplus \lVert \tau\rVert_s, \lVert \sigma\rVert_s\oplus \lVert \eta\rVert_s\} < \alpha$. Then $\max\{\lVert \pi^{\prime\prime}_1\rVert_s\oplus \lVert \tau^\prime_1\rVert_s, \lVert \sigma^{\prime\prime}_1\rVert_s\oplus \lVert \eta^\prime_1\rVert_s \}<\alpha$. Since $\pi^{\prime\prime}_1=\sigma^{\prime\prime}_1$, $\tau^\prime_1 \sim^s_{i-1} \eta^\prime_1$ and $\mathsf{a}$ is  $(\alpha,n,k)$-non-expansive, we obtain $\mathsf{a} (\pi^{\prime\prime}_1,\tau^\prime_1)\sim^s_{i-1}\mathsf{a} (\sigma^{\prime\prime}_1,\eta^\prime_1)$. Now recall that  $\pi^{\prime\prime}_0=\sigma^{\prime\prime}_0$ and the operation $\mathsf{re}_B$ is non-expansive. Therefore, $\mathsf{re}_B (\pi^{\prime\prime}_0,\mathsf{a}(\pi^{\prime\prime}_1,\tau^\prime_1))\sim^s_{i-1} \mathsf{re}_B (\sigma^{\prime\prime}_0,\mathsf{a}(\sigma^{\prime\prime}_1,\eta^\prime_1)) $ and $\mathscr{M}(\mathsf{a})(\pi,\tau)\sim^s_{i}\mathscr{M}(\mathsf{a})(\sigma,\eta)$.

Suppose $\max \{\lVert \pi\rVert_s\oplus \lVert \tau\rVert_s, \lVert \sigma\rVert_s\oplus \lVert \eta\rVert_s\} = \alpha$ and $i\leqslant n+1$. Then $\max\{\lVert \pi^{\prime\prime}_1\rVert_s\oplus \lVert \tau^\prime_1\rVert_s, \lVert \sigma^{\prime\prime}_1\rVert_s\oplus \lVert \eta^\prime_1\rVert_s \}\leqslant\alpha$ and $i-1\leqslant n$. Since $\pi^{\prime\prime}_1=\sigma^{\prime\prime}_1$, $\tau^\prime_1 \sim^s_{i-1} \eta^\prime_1$ and $\mathsf{a}$ is  $(\alpha,n,k)$-non-expansive, we obtain $\mathsf{a} (\pi^{\prime\prime}_1,\tau^\prime_1)\sim^s_{i-1}\mathsf{a} (\sigma^{\prime\prime}_1,\eta^\prime_1)$. Since  $\pi^{\prime\prime}_0=\sigma^{\prime\prime}_0$ and the operation $\mathsf{re}_B$ is non-expansive, $\mathsf{re}_B (\pi^{\prime\prime}_0,\mathsf{a}(\pi^{\prime\prime}_1,\tau^\prime_1))\sim^s_{i-1} \mathsf{re}_B (\sigma^{\prime\prime}_0,\mathsf{a}(\sigma^{\prime\prime}_1,\eta^\prime_1)) $. We obtain $\mathscr{M}(\mathsf{a})(\pi,\tau)\sim^s_{i}\mathscr{M}(\mathsf{a})(\sigma,\eta)$.

The case has been checked. We see that 
\[\mathscr{M}(\mathscr{NE}_{\alpha, n, k}\cap\mathscr{RE}_{\Box^+ B}) \subset \mathscr{NE}_{\alpha, n, k+1}.\] 

Now we prove
\[ \mathscr{M}(\mathscr{NP}_{\alpha, n, k} \cap\mathscr{RE}_{\Box^+ B}) \subset \mathscr{NP}_{\alpha, n, k+1}.\]
Given a $\Box B$-removing $(\alpha, n, k)$-non-pollutive operation $\mathsf{a}$ and a pair of $\infty$-proofs $(\pi,\tau)\in \mathsf{CF}_i(s)\times \mathsf{CF}_i(s)$, we shall check that
\begin{itemize}
\item $\mathscr{M}(\mathsf{a})(\pi,\tau) \in \mathsf{CF}_i(s)$ whenever $\lVert \pi\rVert_s \oplus \lVert \tau\rVert_s  < \alpha$;
\item $\mathscr{M}(\mathsf{a})(\pi,\tau) \in \mathsf{CF}_i(s)$ whenever $\lVert \pi\rVert_s \oplus \lVert \tau\rVert_s  = \alpha$ and $i \leqslant n$;
\item $\mathscr{M}(\mathsf{a})(\pi,\tau) \in \mathsf{CF}_i(s)$ whenever $\lVert \pi\rVert_s \oplus \lVert \tau\rVert_s  = \alpha$, $i = n+1$ and $\lvert \pi\rvert + \lvert \tau\rvert <k+1$.
\end{itemize}

We consider only the main case when $\pi$ and $\tau$ have forms (\ref{case5pi}) and (\ref{case5tau}) respectively, $\Box^+ B\nin \Delta$ and $i>0$.

Suppose $\lVert \pi\rVert_s\oplus \lVert \tau\rVert_s \leqslant \alpha$ and $s\neq A$. Then $ \lVert \pi^{\prime}_1 \rVert_\circ\oplus\lVert \tau^\prime_0\rVert_\circ<\lVert \pi\rVert_s\oplus \lVert \tau\rVert_s \leqslant\alpha $, $\lVert \pi^{\prime\prime}_1\rVert_A\oplus \lVert \tau^\prime_1\rVert_A< \lVert \pi\rVert_s\oplus \lVert \tau\rVert_s \leqslant\alpha$. Note that $\pi$ and $\tau$ are cut-free $\infty$-proofs, because $s\neq B$, $s\neq A$, $\pi\in  \mathsf{CF}_i(s)$, $\tau\in  \mathsf{CF}_i(s)$ and $i>0$. Since $\mathsf{a}$ is  $(\alpha,n,k)$-non-pollutive and the operations of the form $\mathsf{wk}_{\Psi ; \Psi^\prime}$ and $\mathsf{re}_B$ are non-pollutive, the $\infty$-proofs $\mathsf{a} (\pi^{\prime}_1,\tau^\prime_0) $, $\mathsf{a} (\pi^{\prime\prime}_1,\tau^\prime_1)$ and $\mathscr{M}(\mathsf{a})(\pi,\tau)$ do not contain applications of the rule ($\mathsf{cut}$). 

Suppose $\lVert \pi\rVert_s\oplus \lVert \tau\rVert_s < \alpha$ and $s= A$. In this case, $ \lVert \pi^{\prime}_1 \rVert_\circ\oplus\lVert \tau^\prime_0\rVert_\circ<\lVert \pi\rVert_s\oplus \lVert \tau\rVert_s <\alpha $ and $\lVert \pi^{\prime\prime}_1\rVert_A\oplus \lVert \tau^\prime_1\rVert_A\leqslant \lVert \pi\rVert_s\oplus \lVert \tau\rVert_s <\alpha$. Besides, the $\infty$-proofs $\pi$ and $\tau_0$ are cut-free. Since $\mathsf{a}$ is  $(\alpha,n,k)$-non-pollutive and the operations of the form $\mathsf{wk}_{\Psi ; \Psi^\prime}$ are non-pollutive, we see that $\mathsf{a} (\pi^{\prime}_1,\tau^\prime_0) $ does not contain applications of the rule ($\mathsf{cut}$), $(\pi^{\prime\prime}_1,\tau^\prime_1)\in \mathsf{CF}_{i-1}(s)\times  \mathsf{CF}_{i-1}(s)$ and $\mathsf{a} (\pi^{\prime\prime}_1,\tau^\prime_1)\in \mathsf{CF}_{i-1}(s)$. 
Consequently, the $\infty$-proof $\mathsf{re}_B (\pi^{\prime}_0,\mathsf{a} (\pi^{\prime}_1,\tau^\prime_0))$ is cut-free, $(\pi^{\prime\prime}_0,\mathsf{a}(\pi^{\prime\prime}_1,\tau^\prime_1))\in \mathsf{CF}_{i-1}(s)\times\mathsf{CF}_{i-1}(s)$ and $\mathsf{re}_B (\pi^{\prime\prime}_0,\mathsf{a}(\pi^{\prime\prime}_1,\tau^\prime_1))\in \mathsf{CF}_{i-1}(s)$, because the operation $\mathsf{re}_B$ is non-pollutive. Therefore, $\mathscr{M}(\mathsf{a})(\pi,\tau)\in \mathsf{CF}_{i}(s)$.

Suppose $\lVert \pi\rVert_s\oplus \lVert \tau\rVert_s = \alpha$, $s= A$ and $i\leqslant n+1$. We have $ \lVert \pi^{\prime}_1 \rVert_\circ\oplus\lVert \tau^\prime_0\rVert_\circ<\lVert \pi\rVert_s\oplus \lVert \tau\rVert_s =\alpha $ and $\lVert \pi^{\prime\prime}_1\rVert_A\oplus \lVert \tau^\prime_1\rVert_A\leqslant \lVert \pi\rVert_s\oplus \lVert \tau\rVert_s =\alpha$. In addition, the $\infty$-proofs $\pi$, $\tau_0$ and $\mathsf{a} (\pi^{\prime}_1,\tau^\prime_0)$ are cut-free. Since $\mathsf{a}$ is  $(\alpha,n,k)$-non-pollutive, $(\pi^{\prime\prime}_1,\tau^\prime_1)\in \mathsf{CF}_{i-1}(s)\times  \mathsf{CF}_{i-1}(s)$ and $i-1\leqslant n$, we have $\mathsf{a} (\pi^{\prime\prime}_1,\tau^\prime_1)\in \mathsf{CF}_{i-1}(s)$. 
Hence, the $\infty$-proof $\mathsf{re}_B (\pi^{\prime}_0,\mathsf{a} (\pi^{\prime}_1,\tau^\prime_0))$ is cut-free and $\mathsf{re}_B (\pi^{\prime\prime}_0,\mathsf{a}(\pi^{\prime\prime}_1,\tau^\prime_1))\in \mathsf{CF}_{i-1}(s)$. Therefore, $\mathscr{M}(\mathsf{a})(\pi,\tau)\in \mathsf{CF}_{i}(s)$.

The case has been checked. We obtain that \[ \mathscr{M}(\mathscr{NP}_{\alpha, n, k} \cap\mathscr{RE}_{\Box^+ B}) \subset \mathscr{NP}_{\alpha, n, k+1}.\]

It remains to show that
\[ \mathscr{M}(\mathscr{NF}_{\alpha, n, k} \cap\mathscr{RE}_{p}) \subset \mathscr{NF}_{\alpha, n, k+1},\]
but we leave it to the reader.

Finally, applying Theorem \ref{explicit fixed-point3}, we define $\mathsf{re}_{\Box^+ B}$ as the fixed-point of $\mathscr{M}$. From Theorem \ref{explicit fixed-point3}, $\mathsf{re}_{\Box^+ B}$ is non-expansive, non-pollutive and non-fattening.
\end{proof}

\begin{proof}[Proof of Proposition \ref{reabadeq}]
We define the required operation $\mathsf{re}_A$ by induction on the structure of $A $.
If $A=p$, then $\mathsf{re}_{p} $ is defined by Lemma \ref{repadeq}. If $A=\bot$, then we put $\mathsf{re}_{\bot}(\pi,\tau):= \mathsf{i}_\bot (\pi)$, where $\mathsf{i}_\bot$ is the operation from Lemma \ref{inversion}.

Suppose $A=B\to C$. Then     
\[\mathsf{re}_{B \to C}(\pi,\tau):=\mathsf{re}_C(\mathsf{re}_B(\mathsf{wk}_{\emptyset; C}(\mathsf{ri}_{B\to C}(\tau)),\mathsf{i}_{B\to C}(\pi)),\mathsf{li}_{B\to C}(\tau)),\]
where $\mathsf{wk}_{\emptyset; C}, \mathsf{ri}_{B \to C}$, $\mathsf{i}_{B \to C}$, $\mathsf{li}_{B \to C}$ are operations from Lemma \ref{strongweakening} and Lemma \ref{inversion}.

If $A=\Box B$ ($A=\Box^+ B$), then the operation $\mathsf{re}_{\Box B}$ ($\mathsf{re}_{\Box^+ B}$) is given by Lemma \ref{reboxadeq} (Lemma \ref{rebox+adeq}).
\end{proof}

\section{Elimination of all cuts and slimming}
\label{s4.5}
In this section, we define a unary operation on the set $\mathsf{P}$ that maps any $\infty$-proof to a slim cut-free $\infty$-proof of the same sequent. 
\begin{theorem}[cut elimination]
\label{infcuttoinf}
If $\mathsf{S} +\mathsf{cut}\vdash\Sigma;\Gamma\Rightarrow\Delta$, then there is a slim cut-free $\infty$-proof of the sequent $\Sigma;\Gamma\Rightarrow\Delta$.
\end{theorem}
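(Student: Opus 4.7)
The plan is to apply Theorem \ref{explicit fixed-point2} to a suitable contractive mapping $\mathscr{M}\colon\mathscr{RP}\to\mathscr{RP}$ whose unique fixed-point $\mathsf{ce}$ is simultaneously cut-eliminating and slimming. For $\mathsf{a}\in\mathscr{RP}$ and $\pi\in\mathsf{P}$, I would define $\mathscr{M}(\mathsf{a})(\pi)$ by cases on the final inference of $\pi$. If $\pi$ is an initial sequent, set $\mathscr{M}(\mathsf{a})(\pi):=\pi$. If $\pi$ ends with a propositional rule, reapply the same rule to $\mathsf{a}(\pi_i)$ for each premise $\pi_i$. If $\pi$ ends with a cut having cut formula $A$ and premises $\pi_0,\pi_1$, set $\mathscr{M}(\mathsf{a})(\pi):=\mathsf{re}_A(\mathsf{a}(\pi_0),\mathsf{a}(\pi_1))$, using $\mathsf{re}_A$ from Proposition \ref{reabadeq}. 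If $\pi$ ends with a modal rule with decomposition $(\Sigma_0,\Lambda,\Pi,\Phi,\Psi)$, re-parametrize the instance to its slim form by letting $\Lambda'$ and $\Pi'$ be the underlying sets of $\Lambda$ and $\Pi$ and absorbing the multiset differences $\Box\Lambda\setminus\Box\Lambda'$ and $\Box^+\Pi\setminus\Box^+\Pi'$ into $\Phi$; then apply $\mathsf{a}$ to a proof of the slim premise obtained by admissible contraction from the original premise, and re-apply the slim version of the rule.

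Three properties have to be verified. First, $\mathscr{M}$ is contractive. In the propositional and cut cases the premises have strictly smaller local height, so $\lvert\pi\rvert<k+1$ yields $\lvert\pi_i\rvert<k$, and $\mathsf{a}\sim_{\alpha,n,k}\mathsf{b}$ gives $\mathsf{a}(\pi_i)\sim^s_{n+1}\mathsf{b}(\pi_i)$, propagated through the rule and through $\mathsf{re}_A$ by its non-expansiveness. In the modal cases, either the premise sits strictly lower in $\mathit{tr}(\pi^s)$ or, for the right premise of a $\Box^+$ rule with principal formula $A$ matching $s$, one switches annotations via Lemma \ref{annotation lemma} to obtain the required decrease. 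Second, $\mathscr{M}(\mathscr{CE}_{\alpha,n,k}\cap\mathscr{RP})\subset\mathscr{CE}_{\alpha,n,k+1}$: the non-cut cases never introduce a new cut, and the cut case is covered by non-pollutiveness of $\mathsf{re}_A$. Third, $\mathscr{M}(\mathscr{SM}_{\alpha,n,k}\cap\mathscr{RP})\subset\mathscr{SM}_{\alpha,n,k+1}$: the modal cases are slim by construction, non-modal cases preserve slimness by recursion, and the cut case is covered by non-fatteningness of $\mathsf{re}_A$. Theorem \ref{explicit fixed-point2} then supplies $\mathsf{ce}\in\mathscr{CE}\cap\mathscr{SM}\cap\mathscr{RP}$, so for any $\infty$-proof $\pi$ of $\Sigma;\Gamma\Rightarrow\Delta$ the proof $\mathsf{ce}(\pi)$ is a slim cut-free $\infty$-proof of the same sequent.

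The main obstacle I anticipate is the modal case in the definition of $\mathscr{M}$: passing from the original premise with multisets $\Lambda,\Pi$ to the slim premise with sets $\Lambda',\Pi'$ requires admissibility of contraction for arbitrary formulas, whereas the excerpt provides only atomic contraction in Lemma \ref{weakcontraction}. I would therefore first establish, as a preliminary lemma parallel to Lemmas \ref{strongweakening}--\ref{weakcontraction}, the strong admissibility of a contraction operation $\mathsf{ctr}_C\colon\mathsf{P}\to\mathsf{P}$ for each formula $C$ that is non-expansive, non-pollutive and non-fattening and satisfies $\lvert\mathsf{ctr}_C(\pi)\rvert\leqslant\lvert\pi\rvert$ and $\lVert\mathsf{ctr}_C(\pi)\rVert_s\leqslant\lVert\pi\rVert_s$. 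Once this tool is in place, the case analysis for $\mathscr{M}$ becomes a direct adaptation of the arguments in Lemmas \ref{repadeq}--\ref{rebox+adeq}.
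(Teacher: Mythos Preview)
Your proposal is correct and follows essentially the same route as the paper: define $\mathscr{M}$ by cases on the final inference, use $\mathsf{re}_A$ for cuts, re-apply modal rules in slim form via contraction, and conclude by Theorem \ref{explicit fixed-point2}. The one point where the paper proceeds differently is precisely your anticipated obstacle. Rather than proving general contraction independently, the paper observes (Lemma \ref{contaction}) that it is an immediate by-product of Proposition \ref{reabadeq}: taking a slim cut-free proof $\sigma$ of $\Sigma;\Gamma,A\Rightarrow A,\Delta$ from Lemma \ref{AtoA}, one sets $\mathsf{cl}_A(\pi):=\mathsf{re}_A(\sigma,\pi)$ and $\mathsf{cr}_A(\tau):=\mathsf{re}_A(\tau,\sigma)$, inheriting non-expansiveness, non-pollutiveness and non-fatteningness from $\mathsf{re}_A$. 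This saves a separate inductive argument. Note also that the height bounds $\lvert\mathsf{ctr}_C(\pi)\rvert\leqslant\lvert\pi\rvert$ and $\lVert\mathsf{ctr}_C(\pi)\rVert_s\leqslant\lVert\pi\rVert_s$ you list are not needed (and are not delivered by the paper's construction); only the three preservation properties matter, since in the modal clause of $\mathscr{M}$ the contraction is applied \emph{after} $\mathsf{a}$, so contractiveness of $\mathscr{M}$ and the inclusions into $\mathscr{CE}_{\alpha,n,k+1}$, $\mathscr{SM}_{\alpha,n,k+1}$ only require non-expansiveness, non-pollutiveness and non-fatteningness of $\mathsf{ctr}$.
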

Before proving the theorem, consider the inference rule 
\begin{gather*}
\AXC{$\Sigma ;\Gamma , \Phi \Rightarrow \Psi ,  \Delta$}
\LeftLabel{$\mathsf{ctr}_{\Phi;\Psi}$}
\RightLabel{ ,}
\UIC{$\Sigma ;\Gamma , \Phi^{\prime} \Rightarrow \Psi^{\prime},  \Delta$}
\DisplayProof
\end{gather*}
where the multiset $ \Phi^{\prime}$ ($\Psi^{\prime}$) is obtained from $\Phi$ ($\Psi$) by removing all repetitions.
\begin{lemma}\label{contaction}
For any finite multisets of formulas $\Phi$ and $\Psi$, the inference rule ($\mathsf{ctr}_{\Phi;\Psi}$) is admissible in $\mathsf{S}+\mathsf{cut}$. Moreover, the corresponding operation on $\mathsf{P}$ is non-expansive, non-pollutive and non-fattening.
\end{lemma}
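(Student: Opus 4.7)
The plan is to reduce the general rule $(\mathsf{ctr}_{\Phi;\Psi})$ to finitely many applications of single-formula contractions, and to construct each single-formula contraction from the $A$-removing operation $\mathsf{re}_A$ of Proposition~\ref{reabadeq} together with the slim cut-free identity $\infty$-proof from Lemma~\ref{AtoA}.

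Since $\Phi$ and $\Psi$ are finite multisets, $(\mathsf{ctr}_{\Phi;\Psi})$ is obtained by iterating finitely many elementary contractions of the form
\begin{gather*}
\AXC{$\Sigma;\Gamma,A,A\Rightarrow \Delta$}
\LeftLabel{$\mathsf{cl}_A$}
\UIC{$\Sigma;\Gamma,A\Rightarrow \Delta$}
\DisplayProof\;,\qquad
\AXC{$\Sigma;\Gamma\Rightarrow A,A,\Delta$}
\LeftLabel{$\mathsf{cr}_A$}
\UIC{$\Sigma;\Gamma\Rightarrow A,\Delta$}
\DisplayProof\;.
\end{gather*}
For each elementary contraction I would define the corresponding unary operation by choosing, via Lemma~\ref{AtoA}, a slim cut-free $\infty$-proof $\iota_A$ of $\Sigma;\Gamma,A\Rightarrow A,\Delta$, and setting
\[
\mathsf{cl}_A(\pi) := \mathsf{re}_A(\iota_A,\pi), \qquad \mathsf{cr}_A(\pi) := \mathsf{re}_A(\pi,\iota_A).
\]
In both cases the argument of $\mathsf{re}_A$ is a cut pair with cut formula $A$, and its cut result is precisely the conclusion of the contraction. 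Hence $\mathsf{cl}_A$ and $\mathsf{cr}_A$ do realise the elementary contractions.

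The three properties transfer from Proposition~\ref{reabadeq}. Because $\iota_A$ is fixed, binary non-expansiveness of $\mathsf{re}_A$ in its varying argument immediately yields unary non-expansiveness of $\mathsf{cl}_A$ and $\mathsf{cr}_A$. Since $\iota_A$ is cut-free and slim, it lies in $\mathsf{CF}_n(s)\cap \mathsf{SM}_n(s)$ for every $s\in\mathit{Fm}_\circ$ and every $n$, so the non-pollutive and non-fattening properties of $\mathsf{re}_A$ transfer directly to unary non-pollutiveness and non-fattening. Annotation preservation is automatic since $\mathit{Ann}(\cdot)$ is determined by the formulas $\Box^+ s$ appearing in the succedent, and contraction never eliminates such an occurrence entirely.

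The full operation for $(\mathsf{ctr}_{\Phi;\Psi})$ is then the composition of finitely many $\mathsf{cl}_A$ and $\mathsf{cr}_A$, one per repetition to be deleted. Composition plainly preserves each of the three properties, which closes the proof. The only real subtlety I anticipate is bookkeeping: at every intermediate step the identity proof $\iota_A$ must be produced with the correct ambient multisets so that the pair handed to $\mathsf{re}_A$ is genuinely a cut pair with the desired cut result, but this amounts to peeling the repetitions off $\Phi$ and $\Psi$ in a fixed order and invoking Lemma~\ref{AtoA} with the appropriate contexts.
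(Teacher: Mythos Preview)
Your proposal is correct and matches the paper's proof essentially verbatim: the paper also reduces to single-formula contractions $\mathsf{cl}_A$ and $\mathsf{cr}_A$, obtains a slim cut-free identity proof $\sigma$ of $\Sigma;\Gamma,A\Rightarrow A,\Delta$ via Lemma~\ref{AtoA}, and sets $\mathsf{cl}_A(\pi)=\mathsf{re}_A(\sigma,\pi)$, $\mathsf{cr}_A(\tau)=\mathsf{re}_A(\tau,\sigma)$, deriving the three properties from those of $\mathsf{re}_A$ together with the fact that $\sigma$ is slim and cut-free. Your additional remarks on composition and on selecting the ambient multisets for each intermediate identity proof are accurate and simply make explicit what the paper leaves implicit.
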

\begin{proof}
It sufficient to prove that there are non-expansive, non-pollutive and non-fattening operations corresponding to the contraction rules:
\begin{gather*}
\AXC{$\Sigma ;\Gamma , A,A \Rightarrow   \Delta$}
\LeftLabel{$\mathsf{cl}_A$}
\RightLabel{ ,}
\UIC{$\Sigma ;\Gamma ,A \Rightarrow \Delta$}
\DisplayProof \qquad
\AXC{$\Sigma ;\Gamma \Rightarrow  A,A,  \Delta$}
\LeftLabel{$\mathsf{cr}_A$}
\RightLabel{ .}
\UIC{$\Sigma ;\Gamma \Rightarrow A,\Delta$}
\DisplayProof 
\end{gather*}
Assume $\pi$ and $\tau$ are $\infty$-proofs of the sequents $\Sigma ;\Gamma , A,A \Rightarrow  \Delta$ and $\Sigma ;\Gamma \Rightarrow A,A, \Delta$ respectively. By Lemma \ref{AtoA}, we can find an $\infty$-proof of the sequent $\Sigma ;\Gamma , A \Rightarrow A,\Delta$, which is slim and cut-free. Let us denote this $\infty$-proof by $\sigma$ and define the required operations by putting $\mathsf{cl}_A(\pi) = \mathsf{re}_A(\sigma,\pi)$ and $\mathsf{cr}_A(\tau) = \mathsf{re}_A(\tau,\sigma)$, where $\mathsf{re}_A$ is the operation from Lemma \ref{reabadeq}. Since $\sigma$ is slim and cut-free, and $\mathsf{re}_A$ is non-expansive, non-pollutive and non-fattening, the operations $\mathsf{cl}_A$ and $\mathsf{cr}_A$ are also non-expansive, non-pollutive and non-fattening.
\end{proof}


\begin{proof}[Proof of Theorem \ref{infcuttoinf}]
We shall prove that there exists a cut-eliminating root-preserving operation $\mathsf{ce}$, which is also slimming. For this operation to be cut-eliminating, it suffices to define the operation $\mathsf{ce}$ as a fixed point of a contractive mapping $\mathscr{M}\colon \mathscr{RP}\to\mathscr{RP}$ so that the operation $\mathsf{ce}$ commutes with all inference rules except ($\mathsf{cut}$) and satisfies the condition: 
\[
\mathsf{ce}\left(
\AXC{$\pi_0$}
\noLine
\UIC{$\vdots$}
\noLine
\UIC{$\Sigma;\Gamma \Rightarrow A, \Delta$}
\AXC{$\pi_1$}
\noLine
\UIC{$\vdots$}
\noLine
\UIC{$\Sigma;\Gamma, A \Rightarrow \Delta$}
\LeftLabel{$\mathsf{cut}$}
\BIC{$\Sigma;\Gamma\Rightarrow  \Delta$}
\DisplayProof 
\right)
=
\mathsf {re}_A(\mathsf {ce}(\pi_0),\mathsf {ce}(\pi_1)).
\] 
However, in order to make the operation slimming, in the definition of $\mathsf{ce}$, we also use operations of the form $\mathsf{ctr}_{\Phi ;\Psi}$ from Lemma \ref{contaction}.

Given an operation $\mathsf a\in \mathscr{RP}$, we define $\mathscr M (\mathsf a)\in \mathscr{RP}$ as shown in Fig. 5, where $\mathsf{R}= \mathsf a$. Note that multisets $\Lambda^\prime$, $\Pi^\prime$ and $\Box^+ \Pi^\prime$ in Fig. 5 denote the result of removing repetitions from $\Lambda$, $\Pi$ and $\Box^+ \Pi$ respectively. 

We claim that the mapping $\mathscr M \colon \mathscr{RP}\to \mathscr{RP}$ is contractive. Let us consider a pair of root-preserving operations $\mathsf a$ and $\mathsf b$ such that $\mathsf a\sim_{\alpha, n,k}\mathsf b$. We shall check that $\mathscr M (\mathsf a)\sim_{\alpha, n,k+1}\mathscr M(\mathsf b)$. In other words, we prove  
\begin{itemize}
\item $\mathscr M(\mathsf a)(\pi) = \mathscr M(\mathsf b)(\pi)$ whenever $\lVert \pi\rVert  < \alpha$,
\item $\mathscr M(\mathsf a)(\pi)\sim^s_n\mathscr M(\mathsf b)(\pi)$ whenever $\lVert \pi\rVert_s = \alpha$,
\item $\mathscr M(\mathsf a)(\pi)\sim^s_{n+1}\mathscr M(\mathsf b)(\pi)$ whenever $\lVert \pi\rVert_s = \alpha$ and $\lvert\pi\rvert< k+1$,
\end{itemize}
for any $\infty$-proof $\pi$ and any $s\in\mathit{Ann}(\pi)$. We consider only the main case when $\pi$ has the form 
\begin{gather}\label{form8}
\AXC{$\pi_0$}
\noLine
\UIC{$\vdots$}
\noLine
\UIC{$\Sigma;\Gamma \Rightarrow A, \Delta$}
\AXC{$\pi_1$}
\noLine
\UIC{$\vdots$}
\noLine
\UIC{$\Sigma;\Gamma, A \Rightarrow \Delta$}
\LeftLabel{$\mathsf{cut}$}
\RightLabel{ .}
\BIC{$\Sigma;\Gamma\Rightarrow  \Delta$}
\DisplayProof 
\end{gather}

\begin{center}
\begin{sidewaystable}
\begin{adjustwidth}{0cm}{-2.7cm}
\begin{minipage}{\textwidth}
\medskip
\begin{align*}
\AXC{$\mathsf{Ax}$}
\noLine
\UIC{$\Sigma;\Gamma  \Rightarrow  \Delta$}
\DisplayProof 
&\longmapsto
\AXC{$\mathsf{Ax}$}
\noLine
\UIC{$\Sigma;\Gamma  \Rightarrow  \Delta$}
\DisplayProof , \\\\
\AXC{$\pi_0$}
\noLine
\UIC{$\vdots$}
\noLine
\UIC{$\Sigma;\Gamma , B \Rightarrow  \Delta$}
\AXC{$\pi_1$}
\noLine
\UIC{$\vdots$}
\noLine
\UIC{$\Sigma;\Gamma \Rightarrow  A, \Delta$}
\LeftLabel{$\mathsf{\rightarrow_L}$}
\BIC{$\Sigma;\Gamma , A \rightarrow B \Rightarrow  \Delta$}
\DisplayProof 
&\longmapsto
\AXC{$\mathsf R(\pi_0)$}
\noLine
\UIC{$\vdots$}
\noLine
\UIC{$\Sigma;\Gamma , B \Rightarrow  \Delta$}
\AXC{$\mathsf R(\pi_1)$}
\noLine
\UIC{$\vdots$}
\noLine
\UIC{$\Sigma;\Gamma \Rightarrow  A, \Delta$}
\LeftLabel{$\mathsf{\rightarrow_L}$}
\RightLabel{ ,}
\BIC{$\Sigma;\Gamma , A \rightarrow B \Rightarrow  \Delta$}
\DisplayProof 
\end{align*}
\begin{gather*}
\AXC{$\pi_0$}
\noLine
\UIC{$\vdots$}
\noLine
\UIC{$\Sigma;\Gamma, A \Rightarrow  B , \Delta$}
\LeftLabel{$\mathsf{\rightarrow_R}$}
\UIC{$\Sigma;\Gamma \Rightarrow  A \rightarrow B , \Delta$}
\DisplayProof 
\longmapsto
\AXC{$\mathsf R(\pi_0)$}
\noLine
\UIC{$\vdots$}\noLine
\UIC{$\Sigma;\Gamma, A \Rightarrow  B , \Delta$}
\LeftLabel{$\mathsf{\rightarrow_R}$}
\RightLabel{ ,}
\UIC{$\Sigma;\Gamma \Rightarrow  A \rightarrow B , \Delta$}
\DisplayProof \qquad
\AXC{$\pi_0$}
\noLine
\UIC{$\vdots$}
\noLine
\UIC{$\Sigma;\Sigma_0,\Lambda, \Pi, \Box^+ \Pi \Rightarrow A$}
\LeftLabel{$\mathsf{\Box}$}
\UIC{$\Sigma; \Phi, \Box \Lambda, \Box^+ \Pi \Rightarrow \Box A , \Psi$}
\DisplayProof   
\longmapsto
\AXC{$\mathsf{ctr}_{\Lambda;\emptyset}(\mathsf{ctr}_{ \Pi;\emptyset}(\mathsf{ctr}_{\Box^+ \Pi;\emptyset}(\mathsf R(\pi_0))))$}
\noLine
\UIC{$\vdots$}
\noLine
\UIC{$\Sigma;\Sigma_0,\Lambda^\prime, \Pi^\prime, \Box^+ \Pi^\prime \Rightarrow A$}
\LeftLabel{$\mathsf{\Box}$}
\RightLabel{ ,}
\UIC{$\Sigma; \Phi, \Box \Lambda, \Box^+ \Pi \Rightarrow \Box A , \Psi$}
\DisplayProof
\end{gather*}
\begin{align*}
\AXC{$\pi_0$}
\noLine
\UIC{$\vdots$}
\noLine
\UIC{$\Sigma;\Sigma_0,\Lambda, \Pi, \Box^+ \Pi \Rightarrow A$}
\AXC{$\pi_1$}
\noLine
\UIC{$\vdots$}
\noLine
\UIC{$\Sigma;\Sigma_0,\Lambda, \Pi, \Box^+ \Pi \Rightarrow \Box^+ A$}
\LeftLabel{$\Box^+$}
\BIC{$\Sigma; \Phi, \Box \Lambda, \Box^+ \Pi \Rightarrow  \Box^+ A ,\Psi$}
\DisplayProof
&\longmapsto
\AXC{$\mathsf{ctr}_{\Lambda;\emptyset}(\mathsf{ctr}_{ \Pi;\emptyset}(\mathsf{ctr}_{\Box^+ \Pi;\emptyset}(\mathsf{R}(\pi_0))))$}
\noLine
\UIC{$\vdots$}
\noLine
\UIC{$\Sigma;\Sigma_0,\Lambda^\prime, \Pi^\prime, \Box^+ \Pi^\prime \Rightarrow A$}
\AXC{$\mathsf{ctr}_{\Lambda;\emptyset}(\mathsf{ctr}_{ \Pi;\emptyset}(\mathsf{ctr}_{\Box^+ \Pi;\emptyset}(\mathsf{R}(\pi_1))))$}
\noLine
\UIC{$\vdots$}
\noLine
\UIC{$\Sigma;\Sigma_0,\Lambda^\prime, \Pi^\prime, \Box^+ \Pi^\prime \Rightarrow \Box^+ A$}
\LeftLabel{$\Box^+$}
\RightLabel{ ,}
\BIC{$\Sigma; \Phi, \Box \Lambda, \Box^+ \Pi \Rightarrow  \Box^+ A ,\Psi$}
\DisplayProof\\\\
\AXC{$\pi_0$}
\noLine
\UIC{$\vdots$}
\noLine
\UIC{$\Sigma;\Gamma \Rightarrow A, \Delta$}
\AXC{$\pi_1$}
\noLine
\UIC{$\vdots$}
\noLine
\UIC{$\Sigma;\Gamma, A \Rightarrow \Delta$}
\LeftLabel{$\mathsf{cut}$}
\BIC{$\Sigma;\Gamma\Rightarrow  \Delta$}
\DisplayProof 
&\longmapsto
\mathsf {re}_A(\mathsf R(\pi_0),\mathsf R(\pi_1)).
\end{align*}
\medskip 
\end{minipage}
\\\\
\center{\textbf{Fig. 5}}
\end{adjustwidth}
\end{sidewaystable}
\end{center}

Suppose $\lVert \pi\rVert< \alpha$. Then $ \lVert \pi_0 \rVert<\alpha $ and $\lVert \pi_1\rVert<\alpha$. Since $\mathsf a\sim_{\alpha, n,k}\mathsf b$, we have $\mathsf{a}(\pi_0)= \mathsf{b}(\pi_0)$ and $\mathsf{a}(\pi_1)=\mathsf{b}(\pi_1)$. Trivially, $\mathscr{M}(\mathsf{a})(\pi)= \mathsf{re}_A(\mathsf{a}(\pi_0),\mathsf{a}(\pi_1))=\mathsf{re}_A(\mathsf{b}(\pi_0),\mathsf{b}(\pi_1))=\mathscr{M}(\mathsf{b})(\pi)$. 

Suppose $\lVert \pi\rVert_s= \alpha$. Then $ \lVert \pi_0 \rVert\leqslant\lVert \pi_0 \rVert_s\leqslant\alpha $ and $\lVert \pi_1 \rVert\leqslant\lVert \pi_1\rVert_s\leqslant\alpha$. In this case, $\mathsf{a}(\pi_0)\sim^s_n \mathsf{b}(\pi_0)$ and $\mathsf{a}(\pi_1)\sim^s_n\mathsf{b}(\pi_1)$. Now recall that $\mathsf{re}_A$ is non-expansive. Therefore, $\mathscr{M}(\mathsf{a})(\pi)= \mathsf{re}_A(\mathsf{a}(\pi_0),\mathsf{a}(\pi_1))\sim^s_n\mathsf{re}_A(\mathsf{b}(\pi_0),\mathsf{b}(\pi_1))=\mathscr{M}(\mathsf{b})(\pi)$.

Suppose $\lVert \pi\rVert_s= \alpha$ and $\lvert\pi\rvert< k+1$. Then $ \lVert \pi_0 \rVert\leqslant\lVert \pi_0 \rVert_s\leqslant\alpha $, $\lVert \pi_1 \rVert\leqslant\lVert \pi_1\rVert_s\leqslant\alpha$, $\lvert\pi_0\rvert< k$ and $\lvert\pi_1\rvert< k$. We obtain $\mathsf{a}(\pi_0)\sim^s_{n+1} \mathsf{b}(\pi_0)$ and $\mathsf{a}(\pi_1)\sim^s_{n+1}\mathsf{b}(\pi_1)$. Since $\mathsf{re}_A$ is non-expansive, $\mathscr{M}(\mathsf{a})(\pi)= \mathsf{re}_A(\mathsf{a}(\pi_0),\mathsf{a}(\pi_1))\sim^s_{n+1}\mathsf{re}_A(\mathsf{b}(\pi_0),\mathsf{b}(\pi_1))=\mathscr{M}(\mathsf{b})(\pi)$.

The case has been checked. The mapping $\mathscr M \colon \mathscr{RP}\to \mathscr{RP}$ is contractive.

Now we claim that 
\[\mathscr{M}(\mathscr{CE}_{\alpha, n, k}\cap\mathscr{RP}) \subset \mathscr{CE}_{\alpha, n, k+1}.\]
For any root-preserving $(\alpha, n, k)$-cut-eliminating operation $\mathsf{a}$ and any $\infty$-proof $\pi$, we shall check that
\begin{itemize}
\item $\mathscr{M}(\mathsf{a})(\pi) $ is a cut-free $\infty$-proof whenever $\lVert \pi\rVert  < \alpha$,
\item $\mathscr{M}(\mathsf{a})(\pi)\in \mathsf{CF}_n(s)$ whenever $s\in\mathit{Ann}(\pi)$ and $ \lVert \pi\rVert_s  = \alpha$,
\item $\mathscr{M}(\mathsf{a})(\pi)\in \mathsf{CF}_{n+1}(s)$ whenever $s\in\mathit{Ann}(\pi)$, $ \lVert \pi\rVert_s  = \alpha$ and $\lvert\pi\rvert< k+1$.
\end{itemize}
We consider only the main case when $\pi$ has the form of (\ref{form8}).

Suppose $\lVert \pi\rVert< \alpha$. Then $ \lVert \pi_0 \rVert<\alpha $ and $\lVert \pi_1\rVert<\alpha$. Since $\mathsf{a}$ is $(\alpha, n, k)$-cut-eliminating, the $\infty$-proofs $\mathsf{a}(\pi_0)$ and $\mathsf{a}(\pi_1)$ do not contain applications of the rule ($\mathsf{cut}$). Now recall that $\mathsf{re}_A$ is non-pollutive. Therefore, $\mathscr{M}(\mathsf{a})(\pi)= \mathsf{re}_A(\mathsf{a}(\pi_0),\mathsf{a}(\pi_1))$ is a cut-free $\infty$-proof.

Suppose $\lVert \pi\rVert_s= \alpha$ for $s\in\mathit{Ann}(\pi)$. Then $ \lVert \pi_0 \rVert\leqslant\lVert \pi_0 \rVert_s\leqslant\alpha $ and $\lVert \pi_1 \rVert\leqslant\lVert \pi_1\rVert_s\leqslant\alpha$. In this case, the $\infty$-proofs $\mathsf{a}(\pi_0)$ and $\mathsf{a}(\pi_1)$ belong to $\mathsf{CF}_n(s)$. Since $\mathsf{re}_A$ is non-pollutive, $\mathscr{M}(\mathsf{a})(\pi)= \mathsf{re}_A(\mathsf{a}(\pi_0),\mathsf{a}(\pi_1))\in \mathsf{CF}_n(s)$. 

Now suppose $\lVert \pi\rVert_s= \alpha$ for $s\in\mathit{Ann}(\pi)$ and $\lvert\pi\rvert< k+1$. Then $ \lVert \pi_0 \rVert\leqslant\lVert \pi_0 \rVert_s\leqslant\alpha $, $\lVert \pi_1 \rVert\leqslant\lVert \pi_1\rVert_s\leqslant\alpha$, $\lvert\pi_0\rvert< k$ and $\lvert\pi_1\rvert< k$. Consequently, $\mathsf{a}(\pi_0)$ and $\mathsf{a}(\pi_1)$ belong to $\mathsf{CF}_{n+1}(s)$. Since $\mathsf{re}_A$ is non-pollutive, $\mathscr{M}(\mathsf{a})(\pi)= \mathsf{re}_A(\mathsf{a}(\pi_0),\mathsf{a}(\pi_1))\in \mathsf{CF}_{n+1}(s)$. 

The case has been checked. We obtain that 
\[\mathscr{M}(\mathscr{CE}_{\alpha, n, k}\cap\mathscr{RP}) \subset \mathscr{CE}_{\alpha, n, k+1}.\]

We leave it to the reader to show that
\[\mathscr{M}(\mathscr{SM}_{\alpha, n, k}\cap\mathscr{RP}) \subset \mathscr{SM}_{\alpha, n, k+1}.\]

Let $\mathsf{ce}$ be the fixed-point of $\mathscr{M}$. By Theorem \ref{explicit fixed-point2}, the operation $\mathsf{ce}$ is cut-eliminating and slimming.
\end{proof}

\section{Ordinary derivations in the logic $\mathsf{K}^+$ and regular $\infty$-proofs}
\label{s5}
In this section, we consider the logic $\mathsf{K}^+$ (without the rule ($\omega$)) and prove that the given system corresponds to the fragment of the calculus $\mathsf{S}+ \mathsf{cut}$ obtained by allowing only regular cut-free $\infty$-proofs. We also show that the inference rule ($\omega$) is admissible in $\mathsf{K}^+$.


In the case of the logic $\mathsf{K}^+$, as well as for other systems with non-well-founded proofs, regular $\infty$-proofs have finite representations called cyclic (or circular) proofs. For what follows, it will be enough for us to introduce these proofs only for the annotated case. An \emph{annotated cyclic proof} is a pair $(\kappa, d)$, where $\kappa$ is a finite tree of annotated sequents constructed in accordance with annotated versions of inference rules of $\mathsf{S}+\mathsf{cut}$ and $d$ is a function with the following properties: the function $d$ is defined on the set of all leaves of $\kappa$ that are not marked by initial sequents; the image $d(c)$ of a leaf $c$ lies on the path
from the root of $\kappa$ to the leaf $c$ and is not equal to $c$; $d(c)$ and $c$ are marked by the same sequents; all sequents on the path from $d(c)$ to $c$ have the same annotation; this path intersects an application of the rule ($\Box^+$) on the right premise.  If the function $d$ is defined at a leaf $c$, then we say that the nodes $c$ and $d(c)$ are connected by a back-link. An annotated cyclic proof is \emph{cut-free} if there are no applications of the rule ($\mathsf{cut}$) in this proof. 

For example, consider an annotated cyclic proof of the sequent $p, \Box p , \Box^+(p \rightarrow \Box p) \Rightarrow_p \Box^+ p$:
\begin{gather*}
\AXC{$\mathsf{Ax}$}
\noLine
\UIC{$\Sigma ; p , F,   \Box^+ F  \Rightarrow_\circ   p$}
\AXC{$\Sigma ; p ,  \Box p, \Box^+ F  \Rightarrow_p   \Box^+ p^{\;\,\tikzmark{a}} $ }
\AXC{$\mathsf{Ax}$}
\noLine
\UIC{$\Sigma ; p ,   \Box^+ F  \Rightarrow_p    p, \Box^+ p$}
\LeftLabel{$\mathsf{\rightarrow_L}$}
\BIC{$\Sigma ; p , F, \Box^+ F  \Rightarrow_p   \Box^+ p$}
\LeftLabel{$\mathsf{\Box}^+$}
\RightLabel{ ,}
\BIC{$\Sigma ; p, \Box p , \Box^+ F  \Rightarrow_p  \Box^+ p $ \tikzmark{b}}
\DisplayProof 
\begin{tikzpicture}[remember picture, overlay, >=latex, distance=7.7cm]
    \draw[->, thick] (pic cs:a) to [out=24,in=-10] (pic cs:b);
  \end{tikzpicture}
\end{gather*}
where $F= p \to \Box p$.

Notice that annotated cyclic proofs define the same provability relation as annotated regular $\infty$-proofs. Obviously, every annotated cyclic proof can be unravelled into a regular one. It can be shown that the converse is also true.

\begin{lemma}\label{annslimcutfreetocyclic}
Suppose a set of formulas $\Sigma$ is finite and a sequent $\Sigma;\Gamma\Rightarrow_s\Delta$ has an annotated $\infty$-proof $\kappa$ that is slim and cut-free. Then there is an annotated cyclic proof of $\Sigma ; \Gamma\Rightarrow_s\Delta$ that is also cut-free. 
\end{lemma}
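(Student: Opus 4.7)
The plan is to exploit the finiteness of $\Sigma$ and the slimness of $\kappa$ to show that only finitely many annotated sequents occur in $\kappa$, and then to extract a cyclic proof by a top-down traversal that installs a back-link whenever an annotated sequent repeats under the right conditions.

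First I would establish a uniform finiteness bound on the annotated sequents occurring in $\kappa$. Cut-freeness gives a subformula property: every formula appearing in any sequent of $\kappa$ is a subformula of some formula in the finite set $\Sigma \cup \Gamma \cup \Delta$. Call $\mathcal F$ the set of all such subformulas; this is finite. Consequently every annotation is either $\circ$ or a formula $s$ with $\Box^+ s \in \mathcal F$, so annotations come from a finite set. The slimness hypothesis lets me bound multiplicities: crossing a modal rule upward from a slim application produces a premise of the form $\Sigma_0,\Lambda,\Pi,\Box^+\Pi \Rightarrow A$ with $\Sigma_0\subseteq\Sigma$ and $\Lambda,\Pi$ sets, so each formula acquires multiplicity at most a small absolute constant; propositional rules alter multiplicities only by the amount needed to analyse a principal connective; and the sidelong context $\Phi,\Psi$ is erased by each modal rule. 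A routine induction then bounds the total size of every sequent of $\kappa$ in terms of $|\Sigma|$ and $|\mathcal F|$, so the set of annotated sequents appearing anywhere in $\kappa$ is finite.

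Next I would construct a candidate finite tree $\kappa'$ by a top-down traversal of $\kappa$. Starting at the root, at each visited node $b$ I check whether there is an ancestor $a$ of $b$ on the current path such that (i) $a$ and $b$ carry the same annotated sequent, (ii) all annotated sequents on the segment from $a$ to $b$ share the same annotation, and (iii) this segment passes through the right premise of some application of $(\Box^+)$; if such $a$ exists, $b$ becomes a leaf of $\kappa'$ and I set $d(b) := a$ (choosing, say, the one closest to the root). Otherwise I copy the rule applied at the corresponding node of $\kappa$ and recurse on its premises. The tree $\kappa'$ is finitely branching, since every rule of $\mathsf S$ has at most two premises.

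To see that $\kappa'$ is finite, suppose for contradiction it has an infinite branch; this lifts to an infinite branch of $\kappa$ along which the traversal never found a valid back-link. By the definition of annotated $\infty$-proof, this branch has a tail of constant annotation crossing right $(\Box^+)$-premises at depths $j_1<j_2<\dotsb$. By finiteness of the set of annotated sequents, two such nodes $n_{j_l}$ and $n_{j_m}$ with $l<m$ must carry the same annotated sequent, and the segment from $n_{j_l}$ to $n_{j_m}$ satisfies all three back-link conditions — contradicting non-termination. König's lemma then yields finiteness of $\kappa'$. Finally, $(\kappa',d)$ is a cut-free annotated cyclic proof of $\Sigma;\Gamma\Rightarrow_s\Delta$: internal nodes are expanded with rules copied from the cut-free $\kappa$ so no instance of $(\mathsf{cut})$ is introduced, and every leaf either inherits an initial sequent from $\kappa$ or is the source of a back-link meeting the definition's requirements by construction. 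The main obstacle is the first step — the explicit multiplicity bound, which rests critically on the slimness hypothesis; once that is in hand, the traversal and termination argument are standard.
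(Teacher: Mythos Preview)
Your argument is correct, but it takes a genuinely different route from the paper's. The paper proceeds by transfinite induction on the global height $\lVert\kappa\rVert$: when $s=C\in\mathit{Fm}$, it first replaces every subtree $\kappa_a$ rooted at a node just outside the root equivalence class $R$ (where $\lVert\kappa_a\rVert<\lVert\kappa\rVert$) by a cyclic proof obtained from the induction hypothesis, and then folds the remaining spine inside $R$ using only the finiteness of the set of \emph{right $(\Box^+)$-premise} sequents (which is immediate from slimness and the subformula property). Your approach instead bounds \emph{all} annotated sequents of $\kappa$ and runs a single top-down traversal, appealing to K\"onig's lemma and the $\infty$-proof condition on infinite branches for termination. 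The paper's route dovetails with the ordinal apparatus already set up in Section~\ref{s4} and needs only the easy bound on modal premises; your route avoids transfinite induction entirely and is more self-contained, at the price of needing a global multiplicity bound.

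That bound is the one place where your write-up is thin. The cleanest way to make it precise is to observe that the total symbol count of a sequent strictly decreases along any upward step through $(\to_L)$ or $(\to_R)$; since every premise of a slim modal rule has the shape $\Sigma_0,\Lambda,\Pi,\Box^+\Pi\Rightarrow A$ with $\Sigma_0\subseteq\Sigma$ and $\Lambda,\Pi$ sets of subformulas, its symbol count is bounded by a function of $|\Sigma|$ and $|\mathcal F|$, and hence so is the length of any purely propositional segment above it and the size of every sequent on that segment. Together with the analogous bound for the segment above the root, this yields the finite set of annotated sequents you need. With that detail filled in, your termination argument goes through exactly as stated.
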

\begin{proof}
We prove that there exists a annotated cyclic cut-free proof of $\Sigma;\Gamma\Rightarrow_s\Delta$ by induction on $\lVert \kappa \rVert$. If $s=\circ$, then the required cyclic proof is easily obtained by subinduction on the local height of $\kappa$. We omit the details and move on to the next case.

Suppose $s=C\in \mathit{Fm}$. Note that all formulas from $\kappa$ are subformulas of the formulas from $\Sigma;\Gamma\Rightarrow_s\Delta$. Since $\kappa$ is slim, $\kappa$ contains only finitely many different sequents that occur as right premises of the rule ($\Box^+$). We denote the number of these sequents by $n$. The required cyclic proof is defined into two steps from the tree $\kappa$.

Step 1. Let us denote the root of $\kappa$ by $r$ and the equivalence class of $r$ with respect to $\approx_\kappa$ by $R$. Consider an arbitrary node $a$ such that all nodes lying on the path from $r$ to $a$ excluding $a$ belong to $R$ and $a\nin R$. 
For the subtree $\kappa_a$ of $\kappa$ defined by $a$, note that $\lVert \kappa_a \rVert< \lVert \kappa \rVert$. Applying the induction hypothesis, we replace $\kappa_a$ in $\kappa$ with a cyclic cut-free proof of the corresponding sequent and then repeat this transformation for each such node of the tree $\kappa$.      

Step 2. Consider an arbitrary application of the rule ($\Box^+$) of level $n+2$. The path connecting $r$ with the conclusion of the application contains a pair of different nodes $b$ and $c$ such that these two nodes are right premises of the rule ($\Box^+$) marked by the same sequent. Assuming that $c$ is
further from $r$ than $b$, we cut the path under consideration at the node $c$ and connect $c$, which has become a leaf, with $b$ by a back-link. Applying the same transformation successively to all applications of the rule ($\Box^+$) of level $n+2$, we obtain the required cyclic cut-free proof of the sequent $\Sigma;\Gamma\Rightarrow_s\Delta$.
\end{proof}

\begin{propos}\label{someinftocyclic}
If $\mathsf{S}+\mathsf{cut}\vdash\Sigma;\Gamma\Rightarrow\Delta$, where $\Sigma$ is finite, then there is a regular cut-free $\infty$-proof of $\Sigma ; \Gamma\Rightarrow\Delta$. 
\end{propos}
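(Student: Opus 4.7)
The plan is to chain together the previously established tools: first reduce to a slim cut-free $\infty$-proof via cut elimination, then extract a cyclic proof from it using Lemma \ref{annslimcutfreetocyclic}, and finally unravel the cyclic proof to obtain a regular cut-free $\infty$-proof.

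More concretely, starting from the hypothesis $\mathsf{S}+\mathsf{cut}\vdash\Sigma;\Gamma\Rightarrow\Delta$, I would first apply Theorem \ref{infcuttoinf} (cut elimination) to obtain a slim cut-free $\infty$-proof $\pi$ of the same sequent $\Sigma;\Gamma\Rightarrow\Delta$. Since $\circ\in\mathit{Ann}(\pi)$ for every $\infty$-proof, I can annotate $\pi$ with $\circ$ and get the annotated $\infty$-proof $\pi^\circ$ of $\Sigma;\Gamma\Rightarrow_\circ\Delta$. This annotated tree is still slim and cut-free, because annotating does not alter which inference rule instances are slim nor does it introduce applications of $(\mathsf{cut})$.

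Next, since $\Sigma$ is finite by assumption, Lemma \ref{annslimcutfreetocyclic} applies to $\pi^\circ$, producing an annotated cyclic cut-free proof $(\kappa,d)$ of the annotated sequent $\Sigma;\Gamma\Rightarrow_\circ\Delta$. Now erasing all annotations in $\kappa$ and keeping the same back-link function $d$ yields an (unannotated) cyclic cut-free proof of $\Sigma;\Gamma\Rightarrow\Delta$; the resulting finite graph still satisfies the cyclic-proof conditions because the annotation discipline of Lemma \ref{annslimcutfreetocyclic} guarantees that every back-link is attached at a right premise of $(\Box^+)$, and in the unannotated setting this is precisely what is needed for the unravelling to satisfy conditions (a)--(d) on infinite branches.

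Finally, unravelling this cyclic cut-free proof produces a (possibly infinite) tree built from the rules of $\mathsf{S}$ containing only finitely many non-isomorphic subtrees, namely the subtrees rooted at the nodes of $\kappa$. Every infinite branch of the unravelling is obtained by repeatedly traversing some back-link of $d$, so it contains a tail passing through the right premise of $(\Box^+)$ infinitely often and meeting the other conditions, hence the unravelling is a regular cut-free $\infty$-proof of $\Sigma;\Gamma\Rightarrow\Delta$. I do not anticipate a serious obstacle here, as the bulk of the work has already been absorbed into Theorem \ref{infcuttoinf} and Lemma \ref{annslimcutfreetocyclic}; the only point requiring a line of care is checking that erasing annotations from a valid annotated cyclic proof yields a valid unannotated cyclic proof whose unravelling meets the $\infty$-proof conditions (a)--(d), which is immediate from the definition of annotated cyclic proofs.
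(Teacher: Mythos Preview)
Your proposal is correct and follows essentially the same route as the paper: apply Theorem~\ref{infcuttoinf} to get a slim cut-free $\infty$-proof, annotate it with $\circ$, invoke Lemma~\ref{annslimcutfreetocyclic} to obtain an annotated cyclic cut-free proof, and then unravel and erase annotations. The only cosmetic difference is that the paper unravels first and then erases annotations, whereas you erase first and then unravel, but these operations commute.
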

\begin{proof}
Assume $\mathsf{S}+\mathsf{cut}\vdash\Sigma ; \Gamma \Rightarrow \Delta$ and $\Sigma$ is finite. By Theorem \ref{infcuttoinf}, there is a slim cut-free $\infty$-proof $\pi$ of the sequent $\Sigma ; \Gamma \Rightarrow \Delta$. Applying Lemma \ref{annslimcutfreetocyclic} to the annotated $\infty$-proof $\pi^\circ$, we obtain an annotated cyclic proof of $\Sigma ; \Gamma \Rightarrow \Delta$, which is also cut-free. Now we unravel this proof and erase all annotations in it. The resulting tree is a regular cut-free $\infty$-proof of $\Sigma ; \Gamma\Rightarrow\Delta$. 
\end{proof}
\begin{propos}[cut elimination for regular $\infty$-proofs]
If a sequent is provable by a regular $\infty$-proof, then it is provable by a regular cut-free $\infty$-proof.
\end{propos}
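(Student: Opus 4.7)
The plan is to reduce the proposition to the case already handled by Proposition \ref{someinftocyclic}, which disposes of cut-elimination for regular $\infty$-proofs under the assumption that the set $\Sigma$ is finite. The only thing one really uses from the hypothesis ``$\pi$ is regular'' is that, although $\Sigma$ itself may be infinite, only finitely many of its formulas actually appear in modal-rule instances of $\pi$.

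Concretely, given a regular $\infty$-proof $\pi$ of $\Sigma;\Gamma \Rightarrow \Delta$, I would first define $\Sigma^{\ast}$ as the union, over all applications of the rules $(\Box)$ and $(\Box^+)$ in $\pi$, of the finite sets $\Sigma_0 \subseteq \Sigma$ occurring in those applications. Because $\pi$ contains only finitely many non-isomorphic subtrees, only finitely many such $\Sigma_0$ occur in $\pi$, and each is finite; thus $\Sigma^{\ast}$ is a finite subset of $\Sigma$. Next, I would replace $\Sigma$ by $\Sigma^{\ast}$ in the left-hand component of every sequent of $\pi$, obtaining a tree $\pi^{\ast}$. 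Every initial sequent remains initial (since the schemes do not depend on the content of $\Sigma$); every modal-rule instance remains legal (since its $\Sigma_0$ is by construction contained in $\Sigma^{\ast}$); and the conditions (a)--(d) on infinite branches concern only the applied inference rules and their principal formulas, not the $\Sigma$-component, so they are preserved. The subtree-isomorphism structure of $\pi$ is also preserved by this uniform substitution, hence $\pi^{\ast}$ is again regular. In particular $\mathsf{S}+\mathsf{cut} \vdash \Sigma^{\ast};\Gamma\Rightarrow\Delta$ with $\Sigma^{\ast}$ finite.

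At this point Proposition \ref{someinftocyclic} supplies a regular cut-free $\infty$-proof $\sigma^{\ast}$ of $\Sigma^{\ast};\Gamma \Rightarrow \Delta$. Performing the reverse replacement -- putting $\Sigma$ back in place of $\Sigma^{\ast}$ throughout $\sigma^{\ast}$ -- yields a tree $\sigma$ whose modal-rule applications are still valid (since $\Sigma_0 \subseteq \Sigma^{\ast} \subseteq \Sigma$), whose initial sequents are still initial, whose infinite branches still satisfy conditions (a)--(d), and which is still regular and cut-free. Thus $\sigma$ is the desired regular cut-free $\infty$-proof of $\Sigma;\Gamma\Rightarrow\Delta$.

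There is no real obstacle beyond the one observation above: the critical and slightly delicate point is verifying that restricting $\Sigma$ to $\Sigma^{\ast}$ and then extending back to $\Sigma$ preserves all features of an $\infty$-proof -- in particular the global condition on infinite branches and the finiteness of the set of non-isomorphic subtrees -- so that Proposition \ref{someinftocyclic} can be applied in the middle step. Once this is checked, the proposition follows in a few lines.
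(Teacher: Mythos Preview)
Your proposal is correct and follows essentially the same approach as the paper: pass to a finite $\Sigma^{\ast}\subseteq\Sigma$ containing all formulas used in modal-rule premises (finiteness coming from regularity), apply Proposition~\ref{someinftocyclic}, and then expand $\Sigma^{\ast}$ back to $\Sigma$. The paper's proof is terser and does not spell out the verification that the two replacements preserve the $\infty$-proof conditions and regularity, but the argument is identical.
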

\begin{proof}
Assume $\pi $ is a regular $\infty$-proof of $\Sigma ; \Gamma \Rightarrow \Delta $. Since $\pi$ contains only a finite number of different sequents, only finitely many elements of $\Sigma$ occur in the premises of modal rules in  $\pi$. Hence, there is a finite subset $\Sigma_0$ of $\Sigma$ such that we can replace $\Sigma$ with $\Sigma_0$ in all sequents of the $\infty$-proof $\pi$ and obtain an $\infty$-proof of the sequent $\Sigma_0 ; \Gamma \Rightarrow \Delta $. By Proposition \ref{someinftocyclic}, there is a regular cut-free $\infty$-proof of $\Sigma_0 ; \Gamma \Rightarrow \Delta $. Now we extend $\Sigma
_0$ to $\Sigma$ in all sequents of this proof. The resulting tree is a regular cut-free $\infty$-proof of $\Sigma ; \Gamma \Rightarrow \Delta $.
\end{proof}

We put $\Sigma;\Gamma \vdash A$ if $\Sigma;\Gamma \vdash_\omega A$ and the corresponding $\omega$-derivation of $A$ can be chosen so that it does not contain applications of the rule ($\omega$).

\begin{lemma}\label{adm}
If $\Sigma;\emptyset \vdash B \rightarrow \Box (A \wedge B)$, then $\Sigma;\emptyset \vdash B \rightarrow \Box^+ A$.
\end{lemma}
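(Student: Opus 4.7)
The plan is to apply Axiom (v) with $A \wedge B$ in the role of $A$, reducing to known ingredients, and then use the normality of $\Box^+$ (Axiom (iii)) to pass from $\Box^+(A\wedge B)$ to $\Box^+ A$. Throughout, I rely on the fact that when a derivation $\delta$ whose boxed assumption leaves lie in $\Sigma$ is extended below by further applications of $(\mathsf{mp})$ and $(\mathsf{nec})$, all previously boxed assumption leaves remain boxed (the path from the new root to such a leaf still traverses a $(\mathsf{nec})$-inference), so the resulting derivation still witnesses $\Sigma;\emptyset \vdash \text{(new root)}$.

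The concrete steps, starting from the given $\Sigma;\emptyset \vdash B \to \Box(A\wedge B)$, are as follows. First, combining the given derivation with the tautology $(A\wedge B) \to B$ via $(\mathsf{mp})$ and propositional reasoning, we obtain
\[
\Sigma;\emptyset \vdash (A\wedge B) \to \Box(A\wedge B).
\]
Applying $(\mathsf{nec})$ to this, we get $\Sigma;\emptyset \vdash \Box^+\bigl((A\wedge B) \to \Box(A\wedge B)\bigr)$. Next, taking the instance of Axiom (v) with $A\wedge B$ substituted for $A$, namely
\[
\Box(A\wedge B) \wedge \Box^+\bigl((A\wedge B) \to \Box(A\wedge B)\bigr) \to \Box^+(A\wedge B),
\]
and combining it with the two derivations above (joined by propositional reasoning under the hypothesis $B$ and applications of $(\mathsf{mp})$), we conclude $\Sigma;\emptyset \vdash B \to \Box^+(A\wedge B)$.

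For the final step, apply $(\mathsf{nec})$ to the tautology $(A\wedge B) \to A$ to obtain $\Sigma;\emptyset \vdash \Box^+((A\wedge B) \to A)$, and combine this with Axiom (iii), $\Box^+((A\wedge B) \to A) \to (\Box^+(A\wedge B) \to \Box^+ A)$, via $(\mathsf{mp})$ to get $\Sigma;\emptyset \vdash \Box^+(A\wedge B) \to \Box^+ A$. Chaining this with $B \to \Box^+(A\wedge B)$ by propositional reasoning and $(\mathsf{mp})$ yields $\Sigma;\emptyset \vdash B \to \Box^+ A$, as required.

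There is no real obstacle here beyond careful bookkeeping: we must ensure that none of the combinations introduce non-boxed assumption leaves outside of $\Sigma$, but since every manipulation only attaches additional inferences \emph{below} existing (sub)derivations or instantiates axioms/tautologies (which have no assumption leaves), the constraint that all assumption leaves lie in $\Sigma$ and are boxed is preserved throughout.
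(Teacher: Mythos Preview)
Your proof is correct and follows essentially the same approach as the paper: derive $(A\wedge B)\to\Box(A\wedge B)$ from the hypothesis, necessitate, apply Axiom~(v) with $A\wedge B$ in place of $A$, and then pass from $\Box^+(A\wedge B)$ to $\Box^+ A$ via the normality of $\Box^+$. The paper's argument is identical up to presentation; you just spell out the bookkeeping about boxed assumption leaves and the use of Axiom~(iii) a bit more explicitly.
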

\begin{proof}
Assume $\Sigma;\emptyset   \vdash B \rightarrow \Box( A \wedge B)$. It follows that $\Sigma;\emptyset\vdash A\wedge B \rightarrow \Box (A \wedge B)$ and $\Sigma;\emptyset\vdash \Box^+(A\wedge B \rightarrow \Box (A \wedge B))$. From Axiom (v), we obtain $\Sigma;\emptyset \vdash \Box (A\wedge B) \rightarrow \Box^+ (A \wedge B)$. Note that $\mathsf{K}^+ \vdash \Box^+ (A \wedge B)\to \Box^+ A$. Hence, $\Sigma;\emptyset \vdash \Box (A\wedge B) \to \Box^+ A$. Since $\Sigma;\emptyset \vdash B \to \Box (A\wedge B)$, we conclude $ \Sigma;\emptyset \vdash B \rightarrow \Box^+ A$.
\end{proof}

\begin{lemma}\label{reginftoord}
Suppose there is a regular annotated $\infty$-proof $\kappa$ of a sequent $\Sigma ; \Gamma \Rightarrow_s \Delta$. Then $\Sigma ;\emptyset \vdash \bigwedge \Gamma\to \bigvee \Delta $.
\end{lemma}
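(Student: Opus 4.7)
The plan is to imitate the proof of Lemma \ref{inftoomega}, but exploit regularity of $\kappa$ to replace the use of the $(\omega)$-rule by a single application of Lemma \ref{adm} based on a suitably chosen invariant formula.

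First, I would reduce to the case $s=C\in\mathit{Fm}$, since the case $s=\circ$ proceeds exactly as in the proof of Lemma \ref{inftoomega}, Case 1, by subinduction on the local height of $\kappa$. Note that none of the subcases of that argument uses the rule $(\omega)$: all that is used is classical propositional logic, Axiom (iv) in the $\Box^+$ subcase, and basic modal reasoning. So the same argument yields a derivation in $\mathsf{K}^+$ without $(\omega)$.

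For $s=C\in\mathit{Fm}$, fix the equivalence class $R$ of the root with respect to $\approx_\kappa$, and for each $a\in R$ put $G_a:=\bigwedge\Gamma_a\wedge\neg\bigvee\Delta_a$ as in the proof of Lemma \ref{inftoomega}. Because $\kappa$ is regular, the collection of subtrees $\{\kappa_a\mid a\in R\}$ has only finitely many isomorphism types, so the set $\{G_a\mid a\in R\}$ is finite. Let $H:=\bigvee\{G_a\mid a\in R\}$; this is a single formula, playing the role that the sequence $H_i$ played in Lemma \ref{inftoomega}. By induction on $\mathit{rk}(a)=\lvert\kappa_a\rvert$, I would prove
\[
\Sigma;\emptyset\vdash G_a\to\Box(C\wedge H)\qquad\text{for every }a\in R.
\]
The case analysis (initial sequent, $(\Box)$, $(\Box^+)$ with principal formula distinct from $\Box^+C$, $(\Box^+)$ with principal formula $\Box^+C$, and the propositional / $(\mathsf{cut})$ cases) runs precisely as in Case 2 of the proof of Lemma \ref{inftoomega}, with $H_{i+1}$ uniformly replaced by $H$. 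The critical $(\Box^+)$ subcase uses that the right premise $b$ of the application lies again in $R$, so $G_b$ is a disjunct of $H$ and $\mathsf{K}^+\vdash G_b\to H$; the induction hypothesis (applied lower in $\kappa_a$ via Case~1 for $s=\circ$ on the left premise, and via the outer induction on $\mathit{rk}$) produces the needed derivations without invoking~$(\omega)$.

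Once this claim is established, taking the disjunction yields $\Sigma;\emptyset\vdash H\to\Box(C\wedge H)$. Now Lemma \ref{adm} immediately gives $\Sigma;\emptyset\vdash H\to\Box^+C$, and this is the single step that removes the need for the $(\omega)$-rule. Since $\mathsf{K}^+\vdash G_r\to H$, where $r$ is the root of $\kappa$, we obtain $\Sigma;\emptyset\vdash G_r\to\Box^+C$; combining with the propositional tautology $\bigwedge\Gamma\to\bigvee(\Delta\cup\{G_r\})$ yields $\Sigma;\emptyset\vdash\bigwedge\Gamma\to\bigvee\Delta$, as required.

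The main obstacle is verifying that regularity of $\kappa$ indeed forces $\{G_a\mid a\in R\}$ to be finite: this uses that regularity is defined modulo isomorphism of annotated subtrees, and that $G_a$ depends only on the sequent labelling $a$, hence only on the subtree $\kappa_a$. Once this finiteness is in hand, the rest is a careful but routine transcription of the proof of Lemma \ref{inftoomega} with $H_i\rightsquigarrow H$, followed by the single appeal to Lemma \ref{adm}.
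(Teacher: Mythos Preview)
Your proposal is correct and follows essentially the same approach as the paper: imitate the proof of Lemma \ref{inftoomega}, use regularity to conclude that $\{G_a\mid a\in R\}$ is finite, replace the sequence $H_i$ by the single disjunction $H$, and invoke Lemma \ref{adm} in place of the rule $(\omega)$. One small point of presentation: the two cases $s=\circ$ and $s=C$ are not independent but intertwined through the outer transfinite induction on $\lVert\kappa\rVert$ (the $(\Box)$ and $(\Box^+)$ subcases in both cases appeal to this induction hypothesis), so it is cleaner to state that outer induction explicitly rather than phrasing Case~1 as a preliminary ``reduction''.
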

\begin{proof}
The proof of this lemma is similar to the proof of Lemma \ref{inftoomega}. The only difference is that, in Case 2, the set $\{G_a\mid a\in R\}$ is finite. We put $H\coloneq \bigvee \{G_a\mid a\in R \}$ and claim that $\Sigma ;\emptyset \vdash  H\to  \Box (C\wedge H)$. The claim is proved in the same manner as the assertion $\Sigma ;\emptyset \vdash_\omega  H_i\to \Box (C\wedge H_{i+1}) $. The end of the argument is exactly the same as before, except that we use $H$ instead of $H_i$ and Lemma \ref{adm} instead of the rule ($\omega$).  
\end{proof}

\begin{propos}\label{reginfeqord}
We have $\Sigma ;\Gamma \vdash A$, where $\Gamma$ is finite, if and only if the sequent $\Sigma ; \Gamma \Rightarrow A$ is provable in $\mathsf{S}+\mathsf{cut}$ by a regular $\infty$-proof.
\end{propos}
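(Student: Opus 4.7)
The plan is to mirror the proof of Proposition \ref{seqeqinf}, replacing the use of Lemma \ref{inftoomega} by Lemma \ref{reginftoord}, and checking that the sequent-calculus constructions produce regular $\infty$-proofs when the rule ($\omega$) is not available.

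For the backward direction, suppose $\pi$ is a regular $\infty$-proof of $\Sigma;\Gamma \Rightarrow A$. Then the annotation $\pi^\circ$ is also regular, so Lemma \ref{reginftoord} applies and yields $\Sigma;\emptyset \vdash \bigwedge \Gamma \to A$. Since $\Gamma$ is finite, a straightforward application of the rule ($\mathsf{mp}$) together with the propositional tautologies in Axiom (i) gives $\Sigma;\Gamma \vdash \bigwedge \Gamma$ and then $\Sigma;\Gamma \vdash A$, all without using the rule ($\omega$). This part is mechanical.

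For the forward direction, the strategy is to reprove Lemma \ref{omegatoinfcut} restricted to $\omega$-derivations that do not use the ($\omega$)-rule, simultaneously tracking regularity. I proceed by induction on the ordinal height of the $\omega$-derivation $\delta$ of $A$ from $\Sigma;\Gamma$. In every case of Lemma \ref{omegatoinfcut} except the ($\omega$)-rule case, the construction produces an $\infty$-proof whose shape is either finite or obtained by taking an $\infty$-proof that is isomorphic to a previously constructed finite cyclic $\infty$-proof together with one of the explicitly finite slim $\infty$-proofs provided by Lemma \ref{AtoA}. The slim cut-free $\infty$-proofs supplied by Lemma \ref{AtoA} are regular because they are built by a finite induction on the structure of a formula, and in the $\Box^+$ subcase they are constructed as cyclic self-referential trees with a single back-link. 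The axiom cases (i)--(v), the ($\mathsf{mp}$) case, and the ($\mathsf{nec}$) case are thus handled by gluing finitely many regular building blocks together, where the cyclic step needed in the $\Box^+$ subcase of Axiom (v) and in the ($\mathsf{nec}$) case again uses a single self-loop back to the constructed $\infty$-proof itself. This keeps the total number of non-isomorphic subtrees finite, so the resulting $\infty$-proof is regular.

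The main obstacle is the bookkeeping in this last step: I need to argue that the inductive construction really does stay within the class of regular $\infty$-proofs, i.e.\ that at each stage the set of non-isomorphic subtrees stays finite. The key observation making this work is that the ($\omega$) case -- which in the proof of Lemma \ref{omegatoinfcut} gluing the fragments of Fig.\ 2 produces infinitely many pairwise non-isomorphic subtrees -- simply does not occur here, and that all other constructions use only finitely many auxiliary ingredients (the fixed regular $\infty$-proofs from Lemma \ref{AtoA}, the two immediate subderivations of $\delta$, and at most one self-referential back-link). Formally, this can be phrased as building an annotated cyclic proof by induction on $\delta$ and then unravelling, which gives the desired regular $\infty$-proof of $\Sigma;\Gamma \Rightarrow A$ in $\mathsf{S}+\mathsf{cut}$.
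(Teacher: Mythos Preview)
Your backward direction matches the paper exactly. Your forward direction is correct but takes a genuinely different route from the paper.

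The paper does not verify regularity case-by-case in the construction of Lemma~\ref{omegatoinfcut}. Instead, it first passes to a finite subset $\Sigma_0\subset\Sigma$, uses Lemma~\ref{omegatoinfcut} to get \emph{some} $\infty$-proof of $\Sigma_0;\Gamma\Rightarrow A$ (regular or not), and then invokes Proposition~\ref{someinftocyclic}---which in turn rests on the full cut-elimination Theorem~\ref{infcuttoinf} and the cyclic-proof construction of Lemma~\ref{annslimcutfreetocyclic}---to replace this by a regular cut-free $\infty$-proof; finally it expands $\Sigma_0$ back to $\Sigma$. Your argument avoids all the cut-elimination machinery by observing that, absent the $(\omega)$-rule, the derivation $\delta$ is finite and each clause of Lemma~\ref{omegatoinfcut} glues together finitely many regular pieces (those from Lemma~\ref{AtoA}, the inductive hypotheses, and at most one self-referential back-link), hence yields a regular $\infty$-proof directly. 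This is more elementary and self-contained; the paper's approach, on the other hand, yields a \emph{cut-free} regular proof as a by-product and reuses infrastructure already developed, so it is shorter at this point in the text. Both are valid; yours would stand on its own even without Sections~\ref{s4.1}--\ref{s4.5}.
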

\begin{proof}
Assume $\Sigma ;\Gamma \vdash A $ and the set $\Gamma$ is finite. Note that that $\Sigma_0 ;\Gamma \vdash  A $ for some finite subset $\Sigma_0$ of $\Sigma$. We see that $\Sigma_0 ;\Gamma \vdash_\omega  A $ and $\mathsf{S}+\mathsf{cut}\vdash \Sigma_0 ; \Gamma \Rightarrow A$ by Lemma \ref{omegatoinfcut}.  Applying Proposition \ref{someinftocyclic}, we find a regular cut-free $\infty$-proof of the sequent $\Sigma_0 ; \Gamma \Rightarrow A$. We extend $\Sigma_0$ to $\Sigma$ in all sequents of this proof and obtain a regular $\infty$-proof of $\Sigma ; \Gamma \Rightarrow A$.

Now assume $\pi$ is a regular $\infty$-proof of $\Sigma ; \Gamma \Rightarrow A$. Applying Lemma  \ref{reginftoord} to the annotated $\infty$-proof $\pi^\circ$, we obtain $\Sigma ;\emptyset \vdash \bigwedge \Gamma\to A $.
Consequently, $\Sigma ;\Gamma  \vdash \bigwedge \Gamma\to A $, $\Sigma ;\Gamma  \vdash \bigwedge \Gamma$ and $\Sigma ;\Gamma \vdash  A$.
\end{proof}

\begin{propos}
Suppose $\Sigma ; \Gamma\vdash_\omega A$, where $\Sigma$ is finite. Then $\Sigma ; \Gamma\vdash A$.
\end{propos}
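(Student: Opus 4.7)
The plan is simply to chain together the correspondences already established in the paper, using that all substantive work (cut elimination, the passage from slim cut-free $\infty$-proofs to cyclic ones, and the semantic closure back to ordinary $\mathsf{K}^+$-derivations) has already been done.

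First, I note that although $\Gamma$ need not be finite, any $\omega$-derivation witnessing $\Sigma;\Gamma \vdash_\omega A$ contains only finitely many non-boxed assumption leaves, so we may extract a finite multiset $\Gamma_0 \subseteq \Gamma$ such that $\Sigma;\Gamma_0 \vdash_\omega A$ via the same derivation. Since $\Gamma_0$ is finite, Lemma \ref{omegatoinfcut} (equivalently, the forward direction of Proposition \ref{seqeqinf}) yields $\mathsf{S}+\mathsf{cut} \vdash \Sigma;\Gamma_0 \Rightarrow A$.

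Now I would apply Proposition \ref{someinftocyclic}, which requires precisely the finiteness of $\Sigma$ that we are given: this produces a \emph{regular} cut-free $\infty$-proof of the sequent $\Sigma;\Gamma_0 \Rightarrow A$. This is the step that silently invokes the full cut-elimination theorem (Theorem \ref{infcuttoinf}) together with the slimming, annotation and cyclic-representation results from Lemma \ref{annslimcutfreetocyclic}. Applying the backward direction of Proposition \ref{reginfeqord} to this regular $\infty$-proof then yields $\Sigma;\Gamma_0 \vdash A$, i.e., an ordinary $\mathsf{K}^+$-derivation of $A$ with no applications of the rule $(\omega)$, in which boxed assumption leaves are marked by formulas from $\Sigma$ and non-boxed ones by formulas from $\Gamma_0 \subseteq \Gamma$.

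Finally, since $\Gamma_0 \subseteq \Gamma$, the very same derivation witnesses $\Sigma;\Gamma \vdash A$. There is no substantive obstacle here: the only small point to verify is the extraction of the finite $\Gamma_0$, which is immediate from the definition of $\vdash_\omega$ and the well-foundedness of $\omega$-derivations. All three key ingredients, the finiteness of $\Sigma$ (needed for Proposition \ref{someinftocyclic}), the finiteness of $\Gamma_0$ (needed for Proposition \ref{reginfeqord}), and the cut elimination theorem underlying the regularity of the resulting cut-free proof, fit together without further modification.
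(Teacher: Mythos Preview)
Your proof is correct and follows essentially the same route as the paper: extract a finite $\Gamma_0$ from the finitely many non-boxed assumption leaves, pass to $\mathsf{S}+\mathsf{cut}$ via Lemma~\ref{omegatoinfcut}, obtain a regular $\infty$-proof via Proposition~\ref{someinftocyclic} using finiteness of $\Sigma$, and return to $\vdash$ via Proposition~\ref{reginfeqord}. The paper's proof is the same chain, stated more tersely.
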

\begin{proof}
Assume $\Sigma ; \Gamma\vdash_\omega A$ and $\Sigma$ is finite. Since any $\omega$-derivation has only finitely many non-boxed assumption leaves, $\Sigma ; \Gamma_0\vdash_\omega A$ for a finite subset $\Gamma_0$ of $\Gamma$. By Lemma  \ref{omegatoinfcut}, $\mathsf{S}+\mathsf{cut}\vdash\Sigma;\Gamma_0\Rightarrow A$. By Proposition \ref{someinftocyclic}, the sequent $\Sigma;\Gamma_0\Rightarrow A$ has a regular $\infty$-proof. From Proposition \ref{reginfeqord}, we obtain $\Sigma ; \Gamma_0\vdash A$. Consequently, $\Sigma ; \Gamma\vdash A$.
\end{proof}
\begin{corollary}
The inference rule ($\omega$) is admissible in $\mathsf{K}^+$.
\end{corollary}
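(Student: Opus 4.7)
The plan is to reduce admissibility of ($\omega$) in $\mathsf{K}^+$ directly to the preceding proposition by taking $\Sigma = \Gamma = \emptyset$. Suppose we are given an instance of ($\omega$) all of whose premises $B_i \to \Box(A \wedge B_{i+1})$ ($i \in \mathbb{N}$) are provable in $\mathsf{K}^+$; we must show that the conclusion $B_0 \to \Box^+ A$ is also provable in $\mathsf{K}^+$.

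First I would observe that, since each premise $B_i \to \Box(A \wedge B_{i+1})$ is an ordinary theorem of $\mathsf{K}^+$, it has an $\omega$-derivation with no assumption leaves and no applications of the rule ($\omega$); in particular, $\emptyset ; \emptyset \vdash_\omega B_i \to \Box(A \wedge B_{i+1})$ holds for every $i\in\mathbb{N}$, where the assumption set $\Sigma = \emptyset$ is trivially finite. Next, by a single application of the rule ($\omega$) at the root, these $\omega$-derivations may be combined into one $\omega$-derivation whose root is marked by $B_0 \to \Box^+ A$ and all of whose assumption leaves (necessarily boxed, as they sit above ($\omega$)-premises) are vacuous. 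Therefore $\emptyset ; \emptyset \vdash_\omega B_0 \to \Box^+ A$.

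Now I would invoke the preceding proposition with $\Sigma = \emptyset$ (which is finite) and $\Gamma = \emptyset$: from $\emptyset ; \emptyset \vdash_\omega B_0 \to \Box^+ A$ we obtain $\emptyset ; \emptyset \vdash B_0 \to \Box^+ A$, i.e.\ the formula $B_0 \to \Box^+ A$ has an $\omega$-derivation that avoids ($\omega$) entirely. By the definition of $\vdash$, this precisely means that $B_0 \to \Box^+ A$ is provable in the Frege–Hilbert calculus of $\mathsf{K}^+$, which is what admissibility requires.

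The argument is essentially a one-line corollary; no genuine obstacle arises, since all the work has been done in the preceding proposition (which in turn rests on the cut-elimination machinery of Sections \ref{s4.1}--\ref{s4.5} and the passage between regular $\infty$-proofs and ordinary derivations via Lemma \ref{reginftoord}). The only point to be careful about is checking that the side conditions of the preceding proposition hold in the degenerate case $\Sigma = \Gamma = \emptyset$, but both sets are trivially finite, so the application is immediate.
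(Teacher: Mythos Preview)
Your argument is correct and is precisely the intended derivation of the corollary from the preceding proposition: take $\Sigma=\Gamma=\emptyset$, build an $\omega$-derivation of $B_0\to\Box^+ A$ by one application of $(\omega)$ on top of the ordinary $\mathsf{K}^+$-proofs of the premises, and then apply the proposition to eliminate $(\omega)$.
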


\subsubsection*{Acknowledgements.}
I thank my colleague Yuri Savateev for valuable discussions on the topic of the article. Heartily thanks to my beloved wife Maria for her warm and constant
support. SDG 
\bibliographystyle{amsplain}
\bibliography{Collected}

\end{document}